\newtheorem{theorem}{Theorem}[section]
\newtheorem{lemma}[theorem]{Lemma}
\newtheorem{corollary}[theorem]{Corollary}
\theoremstyle{definition}
\newtheorem{example}[theorem]{Example}
\theoremstyle{remark}
\newtheorem{remark}[theorem]{Remark}
\newcommand{\E}{\mathbb{E}}
\newcommand{\N}{\mathbb{N}}
\newcommand{\Sd}{\mathbb{S}^{d-1}}
\renewcommand{\P}{\mathbb{P}}
\newcommand{\R}{\mathbb{R}}
\newcommand{\Rd}{{\mathbb{R}^d}}
\newcommand{\rf}[1]{\eqref{#1}}
\newcommand{\ve}{{\varepsilon }}
\newcommand{\bfX}{\mathbf{X}}
\newcommand{\calF}{\mathcal{F}}
\DeclareMathOperator{\dist}{dist}
\DeclareMathOperator{\supp}{supp}
\DeclareMathOperator*{\esssup}{ess\,sup}
\newcommand{\ind}{\mathds{1}}
\newcommand{\one}{{\bf 1}}
\renewcommand{\subset}{\subseteq}
\newcommand{\vphi}{\varphi}
\renewcommand{\epsilon}{\varepsilon}
\renewcommand{\leq}{\leqslant}
\renewcommand{\le}{\leq}
\renewcommand{\geq}{\geqslant}
\renewcommand{\ge}{\geq}
\newcommand{\od}{{\textrm d}}
\newcommand{\ud}{\,{\textrm d}}
\newcommand{\indyk}{\mathbf{1}}
\definecolor{kb}{rgb}{0.1,0.5,0.1}
\numberwithin{equation}{section}
\begin{document}
	
	\title{Self-similar solution for fractional Laplacian in cones}
	
	\author[K. Bogdan, P. Knosalla, \L{}. Le\.{z}aj and D. Pilarczyk]{Krzysztof Bogdan$^{1,*}$, Piotr Knosalla$^2$, \L{}ukasz Le\.{z}aj$^{1,3,\dagger}$ and Dominika Pilarczyk$^{1,\ddagger}$}
	\address{$^1$Wroc\l{}aw University of Science and Technology, Faculty of Pure and Applied Mathematics, wyb. Wyspia\'{n}skiego 27, 50-370 Wroc\l{}aw, Poland \\ $^*$\href{mailto:krzysztof.bogdan@pwr.edu.pl}{krzysztof.bogdan@pwr.edu.pl}, $^\dagger$\href{lukasz.lezaj@pwr.edu.pl}{lukasz.lezaj@pwr.edu.pl}, $^\ddagger$\href{dominika.pilarczyk@pwr.edu.pl}{dominika.pilarczyk@pwr.edu.pl}}
	\address{$^2$University of Opole, Institute of Physics, ul. Oleska 48, 45-052 Opole, Poland \\ \href{piotr.knosalla@uni.opole.pl}{piotr.knosalla@uni.opole.pl}}
	\address{$^3$Department of Mathematics and Statistics, University of Jyv\"{a}skyl\"{a}, P.O. Box 35, 40014 Jyv\"{a}skyl\"{a}, Finland}

	\thanks{Krzysztof Bogdan was partially supported by the National Science Centre (Poland):
	grant 2017/27/B/ST1/01339. \L{}ukasz Le\.{z}aj was partially supported by the National Science Centre (Poland): grant 2021/41/N/ST1/04139.}
	
	\subjclass[2020]{Primary: 60G18, 60J35; secondary: 60G51, 60J50.}
	\keywords{Dirichlet heat kernel; Martin kernel; entrance law; Yaglom limit; stable process; cone}
	\date{\today}
	
	\begin{abstract}
We construct a self-similar solution of the heat equation
for the fractional Laplacian with Dirichlet boundary conditions
in every fat cone. Furthermore, we give the entrance law from the vertex and the Yaglom limit for the corresponding killed isotropic stable L\'evy process and precise large-time asymptotics for solutions of the Cauchy problem in the cone. 
	\end{abstract}
	
	\maketitle
	
	%
	%
	\section{Introduction}

	Let $d\in \N:=\{1,2,\ldots\}$. Consider  an arbitrary non-empty open cone $\Gamma\subset \Rd$. Thus, $ry \in \Gamma$ whenever $y \in \Gamma$ and $r>0$. 
Let $p_t^{\Gamma}(x,y)$, $t>0$, $x,y\in \Gamma$, be the Dirichlet heat kernel of the cone for the fractional Laplacian. More specifically,  for $\alpha\in (0,2)$,
$p^\Gamma$ is the transition density of the isotropic $\alpha$-stable L\'evy process in $\Rd$ killed upon leaving $\Gamma$; see, e.g., Bogdan and Grzywny \cite {KBTG10}. Let $M_{\Gamma}\colon \Rd\to [0,\infty)$ be the Martin kernel of $\Gamma$ with the pole at infinity (for definitions, see Section~\ref{sec:prelims}). The function is homogeneous (or self-similar) of some degree $\beta\in [0,\alpha)$. 
Our first result captures the asymptotics of $p_t^{\Gamma}$ at the vertex $0$ of $\Gamma$, as follows.
	\begin{theorem}\label{thm:main1} If the cone $\Gamma$ is fat, then for $s,t >0$, $x \in \Gamma$, we have
	\begin{equation*}
		\Psi_t(x):=\lim\limits_{\Gamma \in y \to 0} p_t^{\Gamma}(x,y)/M_{\Gamma}(y)\in (0,\infty),
		\end{equation*} 
		\begin{equation}\label{eq:18}
			\Psi_t(x) = t^{-(d+\beta)/\alpha}\Psi_1\big(t^{-1/\alpha}x\big), 
		\end{equation}
		and
		\begin{equation}\label{eq:psi_evol}
			\int_\Gamma \Psi_s(y)p^\Gamma_t(x,y)\ud y=\Psi_{s+t}(x).
		\end{equation}
	\end{theorem}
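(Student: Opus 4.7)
The plan is to prove the three claims in sequence, relying on the boundary Harnack principle (BHP) for the fractional Laplacian at the vertex of the fat cone $\Gamma$, together with the $\alpha$-stable scaling of the killed semigroup.

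\textbf{Existence of $\Psi_t(x)$.} Fix $x \in \Gamma$, $t > 0$, and take $r \in (0, |x|/2)$ so that $x \notin \bar U_r$ where $U_r := \Gamma \cap B(0, r)$. The Dynkin/Ikeda--Watanabe formula, applied via the strong Markov property at $\tau_{U_r}$ and using the symmetry of $p_t^\Gamma$, gives for $y \in U_r$
\[
  p_t^\Gamma(x, y) = \int_0^t \int_{U_r} \int_{\Gamma \setminus \bar U_r}
  p_s^{U_r}(y, w)\, \nu(z - w)\, p_{t - s}^\Gamma(x, z)\ud z \ud w \ud s,
\]
with $\nu$ the isotropic $\alpha$-stable L\'evy density. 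A BHP-type argument at the vertex $0$---parabolic in $(s, y)$ and applied to the caloric kernel $p_s^{U_r}$---produces $p_s^{U_r}(y, w)/M_\Gamma(y) \to q_s(w)$ as $y \to 0$, along with a uniform-in-small-$y$ majorant that is integrable against the outer factor $\nu(z - w)\, p_{t - s}^\Gamma(x, z)$. Dominated convergence then gives $\Psi_t(x) = \lim_{y \to 0} p_t^\Gamma(x, y)/M_\Gamma(y)$, and the sharp two-sided heat kernel bounds available in fat cones ensure $\Psi_t(x) \in (0, \infty)$.

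\textbf{Self-similarity and evolution.} For~\eqref{eq:18}, stable scaling $p_t^\Gamma(x, y) = t^{-d/\alpha} p_1^\Gamma(t^{-1/\alpha} x, t^{-1/\alpha} y)$ combined with $\beta$-homogeneity $M_\Gamma(y) = t^{\beta/\alpha} M_\Gamma(t^{-1/\alpha} y)$ yields
\[
  \frac{p_t^\Gamma(x, y)}{M_\Gamma(y)} = t^{-(d + \beta)/\alpha}\,
  \frac{p_1^\Gamma(t^{-1/\alpha} x, t^{-1/\alpha} y)}{M_\Gamma(t^{-1/\alpha} y)},
\]
and~\eqref{eq:18} follows on letting $y \to 0$. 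For~\eqref{eq:psi_evol}, divide Chapman--Kolmogorov $p_{s + t}^\Gamma(x, y) = \int_\Gamma p_t^\Gamma(x, z)\, p_s^\Gamma(z, y)\ud z$ by $M_\Gamma(y)$ and let $y \to 0$. The integrand converges pointwise to $p_t^\Gamma(x, z)\, \Psi_s(z)$, and dominated convergence is invoked with a majorant $p_s^\Gamma(z, y)/M_\Gamma(y) \leq F(s, z)$ uniform in small $y$, extracted from the BHP estimate in the near-vertex regime and from L\'evy-tail estimates of $p_s^\Gamma$ for $z$ in the far field; integrability of $F(s, \cdot)$ against $p_t^\Gamma(x, \cdot)$ is again a consequence of the two-sided heat kernel estimates.

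The main obstacle is the first step: identifying $\lim_{y \to 0} p_s^{U_r}(y, w)/M_\Gamma(y)$ and producing the required uniform majorant. This asks for a BHP-type statement for \emph{caloric} kernels near the vertex rather than for harmonic functions, and it is exactly here that the fatness of $\Gamma$ enters essentially. Given that input, the scaling identity is mechanical, and the dominated-convergence argument for the evolution property reduces to a concrete integrability check on $F(s, \cdot)$ against $p_t^\Gamma(x, \cdot)$.
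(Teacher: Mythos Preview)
Your scaling argument for \eqref{eq:18} and the dominated-convergence reduction for \eqref{eq:psi_evol} are fine once the limit $\Psi_t$ exists; these match the paper's reasoning. The problem is entirely in your first step. You reduce the existence of $\lim_{y\to 0} p_t^\Gamma(x,y)/M_\Gamma(y)$ to the existence of $\lim_{y\to 0} p_s^{U_r}(y,w)/M_\Gamma(y)$, but this is not a reduction: $U_r = \Gamma \cap B_r$ has the \emph{same} vertex singularity at $0$ as $\Gamma$, so the latter limit is exactly the claim of the theorem applied to a truncated cone. The ``parabolic BHP'' you invoke to identify $q_s(w)$ is not an available black-box result for general fat cones; it is essentially what must be proved. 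Indeed, the paper explicitly notes (end of the Introduction) that the BHP-based method of \cite{BoPaWa2018} works for Lipschitz cones but fails for fat cones such as $\R\setminus\{0\}$ or the slit plane, which is precisely why a different approach is needed here.

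The paper's proof takes a genuinely different route that avoids any parabolic BHP at the vertex. One forms the Doob-conditioned kernel $\rho_t(x,y)=p_t^\Gamma(x,y)/(M_\Gamma(x)M_\Gamma(y))$ and the associated Ornstein--Uhlenbeck semigroup $\ell_t(x,y)=\rho_{1-e^{-t}}(e^{-t/\alpha}x,y)$. A Schauder--Tychonoff fixed-point argument, using the uniform integrability furnished by the factorization \eqref{eq:dhk_approx} in fat cones, produces a stationary density $\varphi$ for $(L_t)$. Ergodic stability of $(L_t)$ then gives $\rho_1(x,\cdot)\to\varphi$ in $L^1(M_\Gamma^2)$ as $x\to 0$, and a Chapman--Kolmogorov bootstrap upgrades this to pointwise (in fact uniform) convergence $\rho_t(x,y)\to t^{-(d+2\beta)/\alpha}\varphi(t^{-1/\alpha}y)$. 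Setting $\Psi_t(x)=\rho_t(0,x)M_\Gamma(x)$ then yields all three claims. The crucial input is not BHP for caloric functions but rather the heat-kernel factorization in fat sets, which feeds the compactness needed for the fixed point.
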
	
The proof of Theorem \ref{thm:main1} is given in Section \ref{sec:doob}. 
In view of \eqref{eq:18} and \eqref{eq:psi_evol}, $\Psi_t(x)$ 
is a self-similar semigroup solution of the heat equation for the fractional Laplacian with Dirichlet conditions. By
\eqref{eq:psi_evol},
$\Psi_t$ is also an entrance law for $p^{\Gamma}$ at the origin, see, e.g., Blumenthal \cite[p. 104]{RB92}, Haas and Rivero \cite{HassRivero12} or Ba\~{n}uelos et al. \cite{MR1042064}.

The result is the next step in the program 
for the 
boundary potential theory 
sketched in the Introduction of Bogdan at al. \cite{KBTGMR10}, building on the boundary Harnack principle, Green function, and Dirichlet heat kernel estimates. 
In Theorems \ref{thm:YL} and \ref{thm:UYL} below, we make further applications to Probability by obtaining the so-called Yaglom limit for $\Gamma$. We also give applications to Functional Analysis and Partial Differential Equations, as follows.
 
Let $1 \leq q \leq \infty$ and $L^q(\Gamma) := L^q(\Gamma,\od x)$. For a weight function $w>0$, we denote $L^q(w):= L^q(\Gamma, w(x) \ud x)$. For instance, $L^1(M_{\Gamma}) = \{f/M_{\Gamma} \colon f \in L^1\}$. Then, for $1\le q <\infty$, we define
	\begin{equation*}
		\|f\|_{q,M_\Gamma }:=\|f/M_\Gamma\|_{L^q(M_\Gamma^2)}
		=\bigg( \int_{\Gamma} |f(x)|^q M_\Gamma^{2-q}(x)\ud x \bigg) ^{\frac{1}{q}}
		=\|f\|_{L^q(M_\Gamma^{2-q})} ,
	\end{equation*}      
	and, for $q=\infty$, we let
	\begin{equation*}
		\|f\|_{\infty, M_\Gamma}:=\esssup_{x\in \Gamma} |f(x)|/M_\Gamma(x).
	\end{equation*}
	Of course, $\|f\|_{1,M_\Gamma}=\|f\|_{L^1(M_\Gamma)}$. For a non-negative or integrable function $f$ we let
	\[
	P_t^{\Gamma}f(x):= \int_{\Gamma} p_t^{\Gamma}(x,y)f(y)\ud y, \quad t>0,\,x \in \Gamma.
	\]
We say that the cone $\Gamma$ is \emph{smooth} if its boundary is $C^{1,1}$ outside of origin, to wit, there is $r>0$ such that at every boundary point of  $\Gamma$ on the unit sphere $\Sd$, there exist inner and outer tangent balls for $\Gamma$, with radii $r$. Put differently, for $d\ge 2$, the \textit{spherical cap}, $\Gamma\cap \Sd$, is a $C^{1,1}$ subset of~$\Sd$. For instance, the right-circular cones (see Section \ref{sec:prelims}) are smooth.

The second result describes the large-time asymptotic behavior of the semigroup $P_t^{\Gamma}$.
	\begin{theorem}\label{thm:main2}
		Let $q \in [1,\infty)$. Assume that the cone $\Gamma$ is smooth with $\beta \geq \alpha/2$. Then for every $f \in L^1(M_{\Gamma})$ and $A=\int_{\Gamma}f(x)M_{\Gamma}(x)\,\mathrm{d} x$ we have
		\begin{equation}\label{e.sc}
		\lim_{t \to \infty} t^{\frac{d+2\beta}{\alpha }\frac{q-1}{q}}\|P_t^{\Gamma}f-A\Psi_t\|_{q,M_{\Gamma}}=0.
		\end{equation}
	\end{theorem}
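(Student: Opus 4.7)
The plan is to establish \eqref{e.sc} by interpolating between the $L^1(M_\Gamma)$ endpoint (unrated convergence) and an $L^\infty(M_\Gamma)$ endpoint (a uniform bound with the correct rate). The elementary identity
\[
\|g\|_{q,M_\Gamma}^q = \int_\Gamma (|g|/M_\Gamma)^{q-1}|g|M_\Gamma\,dx \le \|g\|_{\infty,M_\Gamma}^{q-1}\,\|g\|_{1,M_\Gamma}
\]
yields $\|g\|_{q,M_\Gamma} \le \|g\|_{\infty,M_\Gamma}^{(q-1)/q}\,\|g\|_{1,M_\Gamma}^{1/q}$. Applied with $g := P_t^\Gamma f - A\Psi_t$ and multiplied by $t^{(d+2\beta)(q-1)/(q\alpha)}$, this reduces Theorem \ref{thm:main2} to (A) $\|P_t^\Gamma f - A\Psi_t\|_{1,M_\Gamma} \to 0$ as $t\to\infty$, and (B) $\sup_{t\ge 1} t^{(d+2\beta)/\alpha}\|P_t^\Gamma f - A\Psi_t\|_{\infty,M_\Gamma} < \infty$.

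The endpoint (B) follows from sharp two-sided heat-kernel estimates in smooth cones, which yield
\[
p_t^\Gamma(x,y) \le C\,M_\Gamma(x)\,M_\Gamma(y)\,t^{-(d+2\beta)/\alpha},\qquad t\ge 1,\ x,y\in\Gamma,
\]
whence $\|P_t^\Gamma f\|_{\infty,M_\Gamma} \le C\,t^{-(d+2\beta)/\alpha}\|f\|_{1,M_\Gamma}$. The matching bound $\|\Psi_t\|_{\infty,M_\Gamma} \le C\,t^{-(d+2\beta)/\alpha}$ follows from \eqref{eq:18} together with the boundedness of $\Psi_1/M_\Gamma$ on $\Gamma$ (also implied by the same estimates). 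The hypothesis $\beta\ge\alpha/2$ enters precisely through the global form of these estimates.

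The convergence (A) is the core of the proof. Since $M_\Gamma$ is $\alpha$-harmonic in $\Gamma$, $P_t^\Gamma M_\Gamma = M_\Gamma$, so $P_t^\Gamma$ is an $L^1(M_\Gamma)$-contraction; by density and splitting $f = f^+ - f^-$ it suffices to prove (A) for $f\ge 0$ bounded and compactly supported in $\Gamma$. Substituting $x=t^{1/\alpha}\xi$, $y=t^{1/\alpha}\eta$ and using the scaling of $p_t^\Gamma$, the $\beta$-homogeneity of $M_\Gamma$, and \eqref{eq:18}, (A) rewrites as
\[
\int_\Gamma \bigl|G_t(\xi) - A\Psi_1(\xi)\bigr|\,M_\Gamma(\xi)\,d\xi \to 0,\qquad G_t(\xi) := \int_\Gamma \frac{p_1^\Gamma(\xi,\eta)}{M_\Gamma(\eta)}\,d\nu_t(\eta),
\]
with $d\nu_t(\eta) := t^{(d+\beta)/\alpha}f(t^{1/\alpha}\eta)\,M_\Gamma(\eta)\,d\eta$ a family of non-negative measures of total mass $A$ supported in the shrinking set $t^{-1/\alpha}\supp f$. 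Theorem \ref{thm:main1} gives $p_1^\Gamma(\xi,\eta)/M_\Gamma(\eta) \to \Psi_1(\xi)$ as $\eta\to 0$, hence $G_t(\xi)\to A\Psi_1(\xi)$ pointwise. By Fubini and $P_1^\Gamma M_\Gamma = M_\Gamma$, $\int_\Gamma G_t M_\Gamma = A$, equal to $\int_\Gamma A\Psi_1 M_\Gamma$ under the normalization $\int\Psi_1 M_\Gamma = 1$ (itself obtained from Theorem \ref{thm:main1} by dominated convergence using the sharp heat-kernel estimates). Since $G_t, A\Psi_1 \ge 0$, Scheff\'e's lemma then upgrades the pointwise convergence with matching integrals to convergence in $L^1(M_\Gamma)$.

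The main obstacle will be establishing (B): the global sharp heat-kernel bound $p_t^\Gamma(x,y) \le C M_\Gamma(x) M_\Gamma(y) t^{-(d+2\beta)/\alpha}$ valid for all $x,y\in\Gamma$ and $t\ge 1$. Here the smoothness of $\Gamma$ and the hypothesis $\beta\ge\alpha/2$ are essential, through the global form of the two-sided estimates available in the literature only under these assumptions; the same estimates also furnish the boundedness of $\Psi_1/M_\Gamma$ used in the bound for $\Psi_t$, and the integrability needed for the normalization $\int\Psi_1 M_\Gamma = 1$ in step (A).
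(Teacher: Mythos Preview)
Your proposal is correct but follows a somewhat different route from the paper. The paper first passes to functions with zero $M_\Gamma$-integral via the identity $P_t^\Gamma f - A\Psi_t = P_{t-1}^\Gamma\big(P_1^\Gamma f - A\Psi_1\big)$, and then proves rated convergence at \emph{both} endpoints for compactly supported mean-zero data: at $q=\infty$ using the uniform limit of Theorem~\ref{thm:rho_conv}, at $q=1$ using Corollary~\ref{lem:1}, with the extension to general $f$ carried out through the abstract hypercontractivity condition~\eqref{eq:20} (Lemma~\ref{lem:hyperc}); the smoothness and $\beta\ge\alpha/2$ enter only at the end, to verify~\eqref{eq:20} (Corollary~\ref{cor:s_cones}). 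You instead use the heat-kernel estimates up front to get the $L^\infty$ \emph{bound} (your~(B), equivalent to $\rho_1$ bounded, which is exactly what $\beta\ge\alpha/2$ gives for smooth cones), and obtain the $L^1$ \emph{convergence} (your~(A)) by Scheff\'e, needing only the pointwise limit from Theorem~\ref{thm:main1} together with the mass identity $\int G_t M_\Gamma = A = \int A\Psi_1 M_\Gamma$. The paper's modular route buys more: it yields the general Theorem~\ref{thm_lim_norm} for fat cones under~\eqref{eq:20}, and it actually proves rated $L^\infty$-convergence (not just boundedness) for compactly supported $f$. Your route is more direct and economical for the stated theorem: Scheff\'e replaces Corollary~\ref{lem:1}, and you avoid the uniform convergence of Theorem~\ref{thm:rho_conv} altogether.
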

	As stated in Lemma~\ref{lem:rc_cones}, the condition $\beta\ge \alpha/2$ is sharp for smooth cones and $d\ge 2$; see also Example~\ref{ex:rc_cones}. Theorem~\ref{thm:main2} follows from the more general Theorem \ref{thm_lim_norm}, by means of 
	Corollary \ref{cor:s_cones}.

	Let us comment on previous developments in the literature and our methods.
	 If $\Gamma=\Rd$, then $\beta=0$, $M_\Gamma=1$ and $p^\Gamma_t (x,y)=p_t(x,y)$ is the transition density of the fractional Laplacian on $\Rd$ (see below). In this case, $\Psi_t(x)=p_t(0,y)$ and Theorem \ref{thm:main2} was resolved by V\'{a}zquez \cite{Vasquez18}, see also Bogdan et al. \cite{BJKP22} with $\kappa=0$ in \cite[Eq. (1.1)]{BJKP22}; see  Example \ref{ex:beta0} below, too. 
	 
	 For general cones $\Gamma$, the behavior of $p_t^\Gamma$ is intrinsically connected to properties of $M_{\Gamma}$, see, e.g., \cite{KBTG10}, \cite{BoPaWa2018}, or Kyprianou et al. \cite{MR4415390}. The identification of  the Martin kernel $M_\Gamma$ was accomplished by Ba\~{n}uelos and Bogdan \cite{BB04}. Its crucial property is the homogeneity of order $\beta \in [0,\alpha)$, which is also reflected in the behavior of the Green function studied by Kulczycki \cite{Kulczycki99} and Michalik \cite[Lemma 3.3]{Michalik06}, at least when $\Gamma$ is a right-circular cone. As we see in Theorems \ref{thm:main1} and \ref{thm:main2}, the exponent $\beta$ determines the self-similarity of the semigroup solution and the asymptotic behavior of the semigroup $P_t^{\Gamma}$, too. For more information on $\beta$ we refer to \cite{BB04} and Bogdan et al. \cite{MR3339224}.

If $\Gamma$ is a Lipschitz cone, then Theorem \ref{thm:main1} follows from \cite[Corollary 3.2 and Theorem 3.3]{BoPaWa2018}. However, the method presented in \cite{BoPaWa2018} does not apply to general fat cones, in particular, to $\Gamma=\R \setminus \{0\}$ or $\Gamma = \R^2 \setminus ([0,\infty) \times \{0\})$, which are of interest for $\alpha \in (1,2)$. 

Instead, in this work we develop an approach suggested by \cite{BJKP22}, where the authors employ a stationary density of an Ornstein-Uhlenbeck type semigroup corresponding to a homogeneous (self-similar) heat kernel in a different setting (on the whole of $\Rd$). Another key tool in their analysis is the so-called Doob conditioning using an invariant function or the heat kernel.
Due to Theorem \ref{thm:M_invariance} below, the Martin kernel $M_\Gamma$ is invariant with respect to $P_t^{\Gamma}$, which indeed allows for Doob conditioning. Then we form the corresponding Ornstein-Uhlenbeck semigroup and prove existence of a stationary density $\vphi$ in Theorem \ref{thm:stat_density} by using the Schauder-Tychonoff fixed-point theorem. As we shall see in the proof of Theorem \ref{thm:main1}, the self-similar semigroup solution $\Psi_t$ is directly expressed by $\vphi$ and $M_{\Gamma}$. 

The remaining developments in our paper are as follows. In Subsection \ref{subsec:Y} we obtain an asymptotic relation between the Martin kernel and the survival probability near the vertex of the cone (see Corollary \ref{cor:survM_as}). We also obtain a Yaglom limit (quasi-stationary distribution) in Theorem \ref{thm:YL}, which describes the behavior of the stable process starting from a fixed point $x \in \Gamma$ and conditioned to stay in a cone, generalizing Theorem 1.1 of \cite{BoPaWa2018}. In Theorem \ref{thm:UYL} we considerably extend both results by allowing arbitrary initial distributions with finite moment of order $\alpha$.

We note that the Yaglom limit for random walks in cones is discussed by Denisov and Wachtel \cite{MR4155177},
but generally there are very few results in literature on unbounded sets. For a broad survey on quasi-stationary distributions, we refer to van Doorn and Pollet \cite{MR3063313}; see also Champagnat and Villemonais \cite{MR4546021}. 
Self-similar solutions for general homogeneous semigroups are discussed in Cholewa and Rodriguez-Bernal \cite{MR4596024}. Patie and Savov \cite{MR4320772} discuss generalized Ornstein-Uhlenbeck semigroups, which they call generalized Laguerre semigroups. 
Results related to Theorem \ref{thm:main2}, but for fractal Burgers equation and fractional $p$-Laplacian can be found in Biler et al. \cite[Theorem 2.2]{BILER2001613} and V\'{a}zquez \cite[Theorem 1.2]{VAZQUEZ2020112034}, respectively. 
For an approach to entrance laws based on fluctuation theory of Markov additive processes, we refer to \cite{MR4155177}, see also Chaumont et al. \cite{MR3160562}.

This brief review shows that the asymptotics of the heat kernel is in a busy intersection between Probability, Partial Differential Equations and Functional Analysis even though the fields do not always communicate. Our approach should apply to rather general self-similar operator semigroup kernels, at least when they enjoy an invariant function and suitable upper and lower bounds.

Here is a simple example to illustrate our findings.
\begin{example}\label{ex.pp}
If $d=1$, $\alpha\in (0,2)$, and $\Gamma=(0,\infty)$, then $\beta=\alpha/2$, $M_\Gamma(x)=(0\vee x)^{\alpha/2}$ for $x\in \R$, so, by Theorem~\ref{thm_lim_norm}, $\lim_{t \to \infty} t^{(1+\alpha)/(2\alpha)}\|P_t^{\Gamma}f\|_{2}=0$ if $\int_0^\infty f(x)x^{\alpha/2}\ud x=0$; see Example~\ref{ex:1}. Furthermore, by Remark~\ref{M_P_comp} and \eqref{eq:21},
the survival probability is $\P_x(\tau_\Gamma>t)\approx  (1 \wedge t^{-1/\alpha}x)^{\alpha/2}$ and the heat kernel satisfies $p_1^{\Gamma}(x,y) \approx (1+|x-y|)^{-1-\alpha} (1 \wedge x)^{\alpha/2} (1 \wedge y)^{\alpha/2}$, so $\Psi_t(x)\approx (t^{1/\alpha} \vee x)^{-1-\alpha}(t^{1/\alpha}\wedge x)^{\alpha/2}$. Here and below $x,y,t>0$. By Lemma~\ref{lem:vphi_redef}, the stationary density of the corresponding Ornstein-Uhlenbeck semigroup is $\varphi(x)\approx (1+x)^{-1-3\alpha/2}$ and the Yaglom limit has density $\varphi(x)x^{\alpha/2}/\int_0^\infty \varphi(y)y^{\alpha/2}\ud y \approx x^{\alpha/2}(1+x)^{-1-3\alpha/2} \approx \Psi_1(x)$; see Theorem~\ref{thm:YL}.  We refer to \cite[Example 5]{HassRivero12} for an exact but less explicit expression for the Yaglom limit by means of exponential functionals. See also Example~\ref{ex:1} for the case of $\Gamma = \R \setminus \{0\}$ and Example~\ref{ex:rc_cones} for $d$-dimensional extensions.
\end{example}

\subsection*{Acknowledgments} 
Part of the research for this work was conducted during \L{}ukasz Le\.{z}aj's post-doctoral stay at the University of Jyv\"{a}skyl\"{a} from January to June 2022. He expresses his gratitude to the University for their hospitality and to Professor Stefan Geiss for his warm welcome, guidance, and support during the stay.

	\section{Preliminaries}\label{sec:prelims}
	For $x,z \in \Rd$, the standard scalar product of is denoted by $x \cdot z$ and $|z|$ is the Euclidean norm. For $x \in \Rd$ and $r\in (0,\infty)$, we let $B(x,r) = \{y \in \Rd \colon |x-y|<r\}$, the ball centered at $x$ with radius $r$, and write $B_r:=B(0,r)$. All the considered sets, functions and measures are Borel. For non-negative functions $f,g$, we write $f \approx g$ if there is a number $c\in (0,\infty)$, i.e., a \textit{constant}, such that $c^{-1}f\le g\le c f$, and write $f \lesssim g$ if there is a constant $c$ such that $f \le cg$. As usual, for two real numbers $a,b \in \R$, we write $a \wedge b := \min \{a,b\}$ and $a \vee b := \max \{a,b\}$.
	
	Recall that $\alpha \in (0,2)$ and  let
	\[
	\nu(z) = c_{d,\alpha} |z|^{-d-\alpha}, \quad z \in \Rd,
	\]
	where the constant $c_{d,\alpha}$ is such that
	\[
	\int_{\Rd} \big( 1-\cos (\xi \cdot z) \big) \nu(z) \ud z = |\xi|^{\alpha}, \quad \xi \in \Rd.
	\]
	For $t>0$ we let
	\begin{equation}\label{eq:inv_F_tr}
		p_t(x) := (2\pi)^{-d}\int_{\Rd} e^{-t|\xi|^{\alpha}} e^{-i\xi \cdot x}\ud \xi, \quad x \in \Rd.
	\end{equation}
	By the L\'{e}vy-Khintchine formula, $p_t$ is a probability density function and 
	\begin{equation*}
		\int_{\Rd} e^{i\xi \cdot x}\,p_t(x)\ud x 
		= e^{-t|\xi|^{\alpha}}, \quad \xi \in \Rd, \;t >0.
	\end{equation*}
	We consider the isotropic $\alpha$-stable L\'evy process $\bfX=(X_t,t\ge 0)$ in $\Rd$, with $$p_t(x,y):=p_t(y-x), \quad x,y \in \Rd,\ t>0,$$  
	as transition density. Thus,
	\begin{equation*}
		\E_x e^{i\xi \cdot X_t} = 
		\int_{\Rd} e^{i\xi \cdot y}\,p_t(x,y) \ud y 
		= e^{i\xi\cdot x-t|\xi|^{\alpha}}, \quad \xi \in \Rd,\, x\in \Rd, \;t >0.
	\end{equation*}
	%
	The L\'{e}vy-Khintchine exponent of $\bfX$ is, of course, $|\xi|^\alpha$ and $\nu$ is the intensity of jumps. 
	%
	By \eqref{eq:inv_F_tr},
	\begin{align}
		\label{eq:dhk_approx_scaling}
		p_t(x,y) &= t^{-d/\alpha}p_1 \big(t^{-1/\alpha}x,t^{-1/\alpha}y\big), \quad x,y \in \Rd, \;t>0,
		\intertext{and}
		\label{eq:iso}
		p_t\left( Tx,Ty\right)& = p_t(x,y),\quad x,y \in \Rd, \;t>0,
	\end{align}
	for every isometry $T$ on $\Rd$.
	It is well known that
	\begin{equation}\label{eq:hk}
		p_t(x,y) \approx t^{-d/\alpha} \wedge t|y-x|^{-d-\alpha}, \quad x,y \in \Rd, \;t>0,
	\end{equation}
	see, e.g., \cite{MR0119247}. 
	We then consider the time of the first exit of $\bfX$ from the cone $\Gamma$, 
	\[
	\tau_{\Gamma} := \inf \{t \geq 0 \colon X_t \notin \Gamma\},
	\]
and we define 
the Dirichlet heat kernel for $\Gamma$,
	\begin{equation*}
	p_t^{\Gamma}(x,y) := p_t(x,y) -\E_x \left[ \tau_{\Gamma}<t;p_{t-\tau_{\Gamma}} \left(  X_{\tau_{\Gamma}},y\right) \right], \quad x,y \in \Gamma, \;t>0,
	\end{equation*}
	see \cite{KBTG10,bogdan2023shotdown, MR1329992}.
	It immediately follows that $p_t^{\Gamma}(x,y) \leq p_t(x,y)$ for all $x,y \in \Gamma$ and $t>0$. The Dirichlet heat kernel is non-negative, and symmetric: $p_t^{\Gamma}(x,y)=p_t^{\Gamma}(y,x)$ for $x,y \in \Gamma$, $t>0$. It satisfies the Chapman-Kolmogorov equations:
	\begin{equation}\label{eq:9}
		p_{t+s}^{\Gamma}(x,y) = \int_{\Gamma}p_t^{\Gamma}(x,z)p_s^{\Gamma}(z,y)\ud z, \quad x,y \in \Gamma, \;s, t>0.
	\end{equation}
	For nonnegative or integrable functions $f$ we define the \textit{killed semigroup} 
	by
	\[
	P_t^{\Gamma} f(x) := \E_x \left[ \tau_{\Gamma}>t; f(X_t) \right] = \int_{\Gamma} p_t^{\Gamma}(x,y)f(y)\ud y, \quad x \in \Gamma,\;t>0.
	\]
	In particular, for $f \equiv 1$ we obtain 
	the \textit{survival probability}:
	\begin{equation}\label{eq:surv_def}
		\P_x(\tau_{\Gamma}>t) = \int_{\Gamma} p_t^{\Gamma}(x,y)\ud y, \quad x \in \Gamma, \,t>0,
	\end{equation}
	see \cite[Remark 1.9]{KBTGMR-dhk}.
	Since $t^{-1/\alpha}\Gamma=\Gamma$, the scaling \eqref{eq:dhk_approx_scaling} extends to the Dirichlet heat kernel:
	\begin{equation*}
		p_t^{\Gamma}(x,y) = t^{-d/\alpha}p_1^{\Gamma} \big( t^{-1/\alpha}x,t^{-1/\alpha}y \big), \quad x,y \in \Gamma,\; t>0.
	\end{equation*}
	As a consequence,
	\begin{equation}\label{eq:surv_scaling}
		\P_x (\tau_{\Gamma}>t) = \P_{t^{-1/\alpha}x}(\tau_{\Gamma}>1), \quad x \in \Gamma,\;t>0.
	\end{equation}
	Furthermore, by \eqref{eq:iso},
	\begin{equation}\label{eq:gamma_T}
		p_t^{T\Gamma}(Tx,Ty) = p_t^\Gamma (x,y), \quad x,y \in \Gamma,\; t>0.
	\end{equation}
	The operators $P_t^\Gamma$ and the kernel $p_t^\Gamma (x,y)$ are the main subject of the paper. In view of \rf{eq:gamma_T}, without loss of generality we may assume that $\one := (0,\ldots,0,1) \in \Gamma$. 
By \cite[Theorem 3.2]{BB04}, there is a unique non-negative function $M_{\Gamma}$ on $\Rd$ such that $M_{\Gamma}(\one)=1$, $M_{\Gamma}=0$ on $\Gamma^c$, and for every open bounded set $B \subset \Gamma$,
	\begin{equation}\label{Mgamma_E}
		M_{\Gamma}(x) = \E_x M_{\Gamma}(X_{\tau_B}), \quad x \in \Rd.
	\end{equation}
	Moreover, $M_{\Gamma}$ is locally bounded on $\Rd$ and homogeneous of some order $\beta \in [0,\alpha)$, i.e.,
	\begin{equation}\label{eq:M_hom}
		M_{\Gamma}(x) = |x|^{\beta}M_{\Gamma}(x/|x|), \quad x \in \Gamma.
	\end{equation}
	We call $M_\Gamma$ the Martin kernel of $\Gamma$ with the pole at infinity.
\begin{example}
By \cite{BB04}, $\beta=\alpha/2$ if $\Gamma$ is a half-space and $\beta=\alpha-1$ if $\Gamma=\R\setminus \{0\}$ and $1<\alpha<2$.
By \cite{MR2182071}, $\beta=(\alpha-1)/2$ if $\Gamma=\R^2 \setminus ([0,\infty) \times \{0\})$ and $1<\alpha<2$.
\end{example}

	Below we often assume that $\Gamma$ is \emph{fat}, i.e., $\kappa \in (0,1)$ exists such that for all $Q \in \overline{\Gamma}$ and $r \in (0,\infty)$, there is a point $A = A_r(Q) \in \Gamma \cap B(Q,r)$ such that $B(A,\kappa r) \subseteq \Gamma \cap B(Q,r)$, see \cite[Definition 1]{KBTGMR10}. 
Recall that $\Gamma$ is \emph{smooth} if  $d=1$ or $d\ge 2$ and $\Gamma \cap \Sd$ is a $C^{1,1}$ subset of $\Sd$.
Furthermore, a cone $\Gamma$ is called \emph{right-circular}, if $\Gamma = \{x=(x_1,\ldots,x_d) \in \Rd \setminus \{0\}: x_d>|x| \cos \eta\}$, with $\eta \in (0,\pi)$ called the angle of the cone. Of course, every right-circular cone is smooth, and every smooth cone is fat.
	
	By \cite[Theorem 1]{KBTGMR10}, the following approximate factorization holds true for fat cones:
	\begin{equation}\label{eq:dhk_approx}
		p_t^{\Gamma}(x,y) \approx  \P_x(\tau_{\Gamma}>t) p_t(x,y) \P_y(\tau_{\Gamma}>t), \quad x,y \in \Gamma,\; t>0.
	\end{equation}
For $R\in (0,\infty)$, we let $\Gamma_R := \Gamma \cap B_R$, the \textit{truncated cone}. 
	
	\section{Doob conditioning}\label{sec:doob}
	The Martin kernel $M_{\Gamma}$ is invariant for the semigroup $P_t^{\Gamma}$, as follows.
	\begin{theorem}\label{thm:M_invariance}
		For all $x \in \Gamma$ and $t>0$, we have $P_t^{\Gamma} M_{\Gamma}(x) = M_{\Gamma}(x)$.
	\end{theorem}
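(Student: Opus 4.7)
My plan is to use the fact that, by \eqref{Mgamma_E}, $M_\Gamma$ is regular harmonic on each bounded open subset of $\Gamma$ and then to approximate $\Gamma$ by the truncated cones $\Gamma_R := \Gamma \cap B_R$ as $R \to \infty$. Fix $x \in \Gamma$, $t > 0$, and $R > |x|$. Applied with $B = \Gamma_R$, \eqref{Mgamma_E} implies that $\bigl(M_\Gamma(X_{s \wedge \tau_{\Gamma_R}})\bigr)_{s \geq 0}$ is a non-negative $\P_x$-martingale closed by $M_\Gamma(X_{\tau_{\Gamma_R}})$; equivalently, by the strong Markov property applied at $t \wedge \tau_{\Gamma_R}$,
\begin{equation*}
	M_\Gamma(x) \;=\; \E_x\bigl[\tau_{\Gamma_R} > t;\, M_\Gamma(X_t)\bigr] \;+\; \E_x\bigl[\tau_{\Gamma_R} \leq t;\, M_\Gamma(X_{\tau_{\Gamma_R}})\bigr].
\end{equation*}

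I next let $R \to \infty$. Because $\Gamma_R \uparrow \Gamma$, we have $\tau_{\Gamma_R} \uparrow \tau_\Gamma$ almost surely, so monotone convergence makes the first summand increase to $P_t^\Gamma M_\Gamma(x)$. The theorem therefore reduces to showing that the second summand vanishes in the limit. Since $M_\Gamma = 0$ on $\Gamma^c$, this summand is concentrated on $\{X_{\tau_{\Gamma_R}} \in \Gamma \setminus B_R\}$, and the self-similarity of the $\alpha$-stable process together with the homogeneity \eqref{eq:M_hom} of $M_\Gamma$ rewrite it as
\begin{equation*}
	R^\beta\, \E_{x/R}\bigl[\tau_{\Gamma_1} \leq t/R^\alpha,\, X_{\tau_{\Gamma_1}} \in \Gamma;\, M_\Gamma(X_{\tau_{\Gamma_1}})\bigr].
\end{equation*}

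I expect to prove the rescaled expectation is $O(t/R^\alpha)$, so that the whole remainder is $O(R^{\beta - \alpha}) \to 0$ thanks to $\beta < \alpha$. By Ikeda--Watanabe the rescaled quantity equals $\int_0^{t/R^\alpha}\!\!\int_{\Gamma_1} p_{s'}^{\Gamma_1}(x/R, y)\, F(y)\,dy\,ds'$, where $F(y) := \int_{\Gamma \setminus B_1} \nu(z-y)\, M_\Gamma(z)\,dz$. For $y \in \Gamma_{1/2}$ the inequality $|z-y| \geq |z|/2$ together with $M_\Gamma(z) \lesssim |z|^\beta$ yields $F(y) \lesssim \int_1^\infty r^{\beta - 1 - \alpha}\,dr < \infty$, so this region contributes $O(t/R^\alpha)$ as desired. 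The main obstacle is the complementary regime $y \in \Gamma_1 \setminus \Gamma_{1/2}$: there $F(y)$ is singular of order $(1-|y|)^{-\alpha}$ and is not Lebesgue-integrable once $\alpha \geq 1$, so the compensating decay must be extracted from $p_{s'}^{\Gamma_1}$, which I would do via the inequality $p_{s'}^{\Gamma_1} \leq p_{s'}^{B_1}$ and the known survival estimate $\P_y(\tau_{B_1} > s') \lesssim (1-|y|)^{\alpha/2}/(s')^{1/2}$ for the ball. Once this short-time estimate is in place, combining the two convergences gives $M_\Gamma(x) = P_t^\Gamma M_\Gamma(x)$, which is the claim.
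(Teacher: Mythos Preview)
Your decomposition
\[
M_\Gamma(x)=\E_x\bigl[\tau_{\Gamma_R}>t;\,M_\Gamma(X_t)\bigr]+\E_x\bigl[\tau_{\Gamma_R}\le t;\,M_\Gamma(X_{\tau_{\Gamma_R}})\bigr]
\]
is exactly the identity the paper starts from, and your monotone-convergence argument for the first term is fine. The divergence is in how the second term is handled.

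The paper does \emph{not} estimate the remainder at all. It simply observes that the single integrand
$M_\Gamma(X_{t\wedge\tau_{\Gamma_R}})\ind_{\{X_{t\wedge\tau_{\Gamma_R}}\in\Gamma\}}$
converges $\P_x$-a.s.\ to $M_\Gamma(X_t)\ind_{\{t<\tau_\Gamma\}}$ and is dominated by
$c\,(X_t^*)^\beta$, where $X_t^*=\sup_{0\le s\le t}|X_s|$. Since $\beta<\alpha$, one has
$\E_x(X_t^*)^\beta<\infty$ (e.g.\ \cite{MR4490672}), and dominated convergence finishes the proof in two lines. No Ikeda--Watanabe, no scaling, no boundary analysis of $F$.

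Your route via Ikeda--Watanabe is viable and would even give the quantitative rate $O(R^{\beta-\alpha})$, but as written it has a gap at the point you yourself flag. The survival bound
$\P_y(\tau_{B_1}>s')\lesssim (1-|y|)^{\alpha/2}(s')^{-1/2}$ only supplies $(1-|y|)^{\alpha/2}$ of decay, which by itself does not integrate $(1-|y|)^{-\alpha}$ when $\alpha\ge 1$. What actually makes the estimate close is the additional smallness of the free kernel coming from spatial separation: for $|y|>1/2$ and $|x/R|$ small one has $p_{s'}(x/R,y)\lesssim s'$, and it is this factor, combined with the survival bound, that yields $\int_0^{t/R^\alpha}\!\cdots\,\od s'=O(t/R^\alpha)$. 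You did not state this ingredient, and without it your sketch for the ``bad region'' does not go through.

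So: your strategy can be completed, but you are working much harder than necessary. The clean fix is to drop the Ikeda--Watanabe computation entirely and invoke dominated convergence with the maximal function as dominant.
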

	\begin{proof}
		Fix $t>0$ and $x\in\Gamma$. We have
		\begin{equation}\label{eq:2}
			P_t^{\Gamma} M_{\Gamma}(x) = \E_x \Big[\tau_{\Gamma}>t;\, M_{\Gamma}(X_t) \Big].
		\end{equation}
		Let $R>0$ and $\tau_R:=\tau_{\Gamma_R}$. By \eqref{Mgamma_E} and the strong Markov property,
		\begin{equation}\label{eq:3}
			M_{\Gamma}(x) = \E_x M_{\Gamma}(X_{\tau_{\Gamma_R}})= \E_x  M_{\Gamma} \big( X_{t \wedge \tau_R} \big) = \E_x  \Big[ X_{t \wedge \tau_R} \in \Gamma;\,M_{\Gamma} \big( X_{t \wedge \tau_R} \big) \Big],
		\end{equation}
		where the last equality follows from the fact that $M_{\Gamma} = 0$ outside $\Gamma$. We note that $\P_x-a.s.$, $\tau_R \to \tau_{\Gamma}$ as $R \to \infty$ (see, e.g., \cite[proof of Proposition A.1]{ABGLW21}). We consider two scenarios. On $\{\tau_{\Gamma}=\infty\}$, for $R$ large enough, we have: $\tau_R > t$, $\ind_{X_{t \wedge \tau_R} \in \Gamma} = 1 = \ind_{t<\tau_{\Gamma}}$, and
			\[
			M_{\Gamma} \big( X_{t \wedge \tau_R} \big) \ind_{X_{t \wedge \tau_R} \in \Gamma} = M_{\Gamma}(X_t) = M_{\Gamma}(X_t)\ind_{t<\tau_{\Gamma}}.
			\]
			On $\{\tau_{\Gamma}<\infty\}$, for $R$ large enough we have: $\tau_R = \tau_{\Gamma}$, $\ind_{X_{t \wedge \tau_R} \in \Gamma} = \ind_{t<\tau_{\Gamma}}$, and
			\[
			M_{\Gamma} \big( X_{t \wedge \tau_R} \big) \ind_{X_{t \wedge \tau_R} \in \Gamma} = M_{\Gamma}(X_t) \ind_{t<\tau_{\Gamma}},
			\]
			too. In both cases, the integrand on the right-hand side of \eqref{eq:3} converges $a.s.$ to the integrand on the right-hand side of \eqref{eq:2} as $R \to \infty$. By the local boundedness of $M_{\Gamma}$ and \eqref{eq:M_hom},
		\[
		\Big| M_{\Gamma} \big( X_{t \wedge \tau_R} \big) \ind_{X_{t \wedge \tau_R} \in \Gamma} \Big| \leq c  \big| X_{t \wedge \tau_R} \big|^{\beta} \leq c (X_t^*)^{\beta},
		\]
		where
		\[
		X_t^* := \sup_{0 \leq s \leq t} |X_s|.
		\]
		Using \cite[Theorem 2.1]{MR4490672} and the fact that $\beta\in[0,\alpha$), we conclude that $\E_x(X_t^*)^{\beta}<\infty$. An application of the dominated convergence theorem ends the proof.
	\end{proof}
	\subsection{Renormalized kernel}
	We define the renormalized (Doob-conditioned) kernel
	\begin{equation}\label{eq:rho_def}
		\rho_t(x,y) = \frac{p_t^{\Gamma}(x,y)}{M_{\Gamma}(x)M_{\Gamma}(y)}, \quad x,y \in \Gamma,\;t>0.
	\end{equation}
	Note that $\rho$ is jointly continuous. By Theorem \ref{thm:M_invariance},
	\begin{equation}\label{eq:rho_density}
		\int_{\Gamma} \rho_t(x,y)M_{\Gamma}^2(y)\ud y=1, \quad x \in \Gamma,\; t>0,
	\end{equation}
	and by \eqref{eq:9},
	\begin{equation}\label{eq:rho_ChK}
	\int_{\Gamma} \rho_t(x,y)\rho_s(y,z) M_{\Gamma}^2(y)\ud y = \rho_{t+s} (x,z), \quad x,y \in \Gamma,\;s,t>0.
	\end{equation}
	In other words, $\rho_t$ is a symmetric transition probability density on $\Gamma$ with respect to the measure $M^2_{\Gamma}(y)\ud y$. Furthermore, the following scaling property holds true: for all $x,y \in \Gamma$ and all $t>0$,
	\begin{equation}\label{eq:scaling1}
		\rho_t(x,y) = \frac{t^{-d/\alpha}p_1^{\Gamma}(t^{-1/\alpha}x,t^{-1/\alpha}y)}{t^{2\beta/\alpha}M_{\Gamma}(t^{-1/\alpha}x)M_{\Gamma}(t^{-1/\alpha}y)} = t^{-(d+2\beta)/\alpha} \rho_1 (t^{-1/\alpha}x,t^{-1/\alpha}y).
	\end{equation}
	Therefore,
	\begin{equation}\label{eq:scaling2}
		\rho_{st}(t^{1/\alpha}x,t^{1/\alpha}y) = t^{-(d+2\beta)/\alpha} \rho_s(x,y), \quad x,y \in \Gamma,\;s,t>0.
	\end{equation}
	By \eqref{eq:dhk_approx}, for fat cones we have
	\begin{equation}\label{eq:rho_factor}
		\rho_t(x,y) \approx \frac{\P_x (\tau_{\Gamma}>t)}{M_{\Gamma}(x)}p_t(x,y)  \frac{\P_y (\tau_{\Gamma}>t)}{M_{\Gamma}(y)}, \quad x,y \in \Gamma,\, t>0.
	\end{equation}
	The boundary behavior of $\P_x(\tau_{\Gamma}>t)/M_\Gamma (x)$ is important due to \eqref{eq:rho_factor}, but it is rather elusive. 
	The next lemma strengthens the upper bound from \cite[Lemma 4.2]{BB04}. 
	\begin{lemma}\label{lem:sur_M_comp}
		There exists a constant $c$ depending only on $\alpha$ and $\Gamma$, such that
		\begin{equation}\label{eq:5}
			\P_x(\tau_{\Gamma}>t) \leq c\Big(t^{-\beta/\alpha}+t^{-1}|x|^{\alpha-\beta}\Big) M_{\Gamma}(x), \quad t>0,\,x \in \Gamma.
		\end{equation}
	\end{lemma}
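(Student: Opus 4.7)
The plan is a reduction to the unit sphere by scaling, followed by the invariance of $M_\Gamma$ combined with Chebyshev's inequality. Setting $y:=t^{-1/\alpha}x$ in the scaling \eqref{eq:surv_scaling} and using $M_\Gamma(x)=t^{\beta/\alpha}M_\Gamma(y)$ from \eqref{eq:M_hom}, a direct computation shows the desired inequality is equivalent to its $t=1$ version,
\begin{equation*}
\P_y(\tau_\Gamma>1)\le c(1+|y|^{\alpha-\beta})M_\Gamma(y),\qquad y\in\Gamma.
\end{equation*}
For $|y|\le 1$, I would simply invoke \cite[Lemma~4.2]{BB04}, which in this range supplies the even cleaner bound $\P_y(\tau_\Gamma>1)\le cM_\Gamma(y)$.

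For $|y|>1$, write $y=r\theta$ with $r=|y|$ and $\theta=y/r\in\Gamma\cap\Sd$, and apply \eqref{eq:surv_scaling} once more to obtain $\P_y(\tau_\Gamma>1)=\P_\theta(\tau_\Gamma>s)$ with $s:=r^{-\alpha}\in(0,1)$. Since $(1+r^{\alpha-\beta})r^\beta\ge r^\alpha$ and $M_\Gamma(y)=r^\beta M_\Gamma(\theta)$, the target reduces to establishing
\begin{equation*}
\P_\theta(\tau_\Gamma>s)\le c\,s^{-1}M_\Gamma(\theta),\qquad \theta\in\Gamma\cap\Sd,\ s\in(0,1).
\end{equation*}

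To prove this last inequality I would apply Theorem~\ref{thm:M_invariance} at time $s$, giving $M_\Gamma(\theta)=\E_\theta[\tau_\Gamma>s;\,M_\Gamma(X_s)]$; Chebyshev at a threshold $\lambda>0$ then yields $\P_\theta(\tau_\Gamma>s,\,M_\Gamma(X_s)\ge\lambda)\le\lambda^{-1}M_\Gamma(\theta)$. The complementary contribution is bounded by dominating $p_s^\Gamma\le p_s$ and integrating:
\begin{equation*}
\P_\theta(\tau_\Gamma>s,\,M_\Gamma(X_s)<\lambda)\le\int_{\{z\in\Gamma:\,M_\Gamma(z)<\lambda\}}p_s(\theta,z)\,dz,
\end{equation*}
where homogeneity \eqref{eq:M_hom} shows that, up to a thin boundary strip, $\{M_\Gamma<\lambda\}\cap\Gamma$ is contained in $\Gamma\cap B_{C\lambda^{1/\beta}}$, whose $p_s$-mass is at most $c\lambda^{d/\beta}s^{-d/\alpha}$ by \eqref{eq:hk}. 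Calibrating $\lambda\asymp s^{\beta/\alpha}$ then produces $\P_\theta(\tau_\Gamma>s)\le cs^{-\beta/\alpha}M_\Gamma(\theta)$, which, since $\beta<\alpha$ and $s\le 1$, is even stronger than what is required.

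The hard part will be handling the thin boundary strip $\{z\in\Gamma:\,M_\Gamma(z)<\lambda,\ |z|>C\lambda^{1/\beta}\}$, where $M_\Gamma(z)$ is small only because $z/|z|$ lies near $\partial(\Gamma\cap\Sd)$; this region extends to infinity and has to be controlled separately from the bulk. In the fat-cone setting I would expect this to be handled via the sharp factorization \eqref{eq:dhk_approx}, which accurately captures the killing of $p_s^\Gamma$ near the lateral boundary; otherwise a quantitative lower bound on $M_\Gamma$ in terms of the distance to $\partial\Gamma$, of the boundary-Harnack type used in \cite{BB04}, would be needed.
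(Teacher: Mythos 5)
Your reduction to $t=1$ by \eqref{eq:surv_scaling}, the appeal to \cite[Lemma 4.2]{BB04} for $|x|\le 1$, and the further rescaling of the far regime to a starting point $\theta\in\Gamma\cap\Sd$ at time $s=|x|^{-\alpha}$ all match the paper. The core argument for $|x|>1$, however, does not close, and the obstruction is more basic than the ``thin boundary strip'' you flag. Once you dominate $p_s^\Gamma\le p_s$, you discard exactly the information the target inequality requires: the right-hand side $c\,s^{-1}M_\Gamma(\theta)$ vanishes as $\theta$ approaches the lateral boundary of the spherical cap, while your bound on the complementary event does not. Concretely, with $\lambda\asymp s^{\beta/\alpha}$ your bulk estimate gives $\int_{\Gamma\cap B_{C\lambda^{1/\beta}}}p_s(\theta,z)\ud z\lesssim \lambda^{d/\beta}s^{-d/\alpha}\approx 1$ (and even the sharper computation using $|z-\theta|\approx 1$ yields $\approx s^{1+d/\alpha}$), a quantity bounded below uniformly in $\theta$; this cannot be $\lesssim s^{-\beta/\alpha}M_\Gamma(\theta)$, since $M_\Gamma(\theta)$ is arbitrarily small near $\partial(\Gamma\cap\Sd)$. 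So the conclusion you draw from the calibration step is false as stated for the bulk alone, before the strip is even addressed. (Minor point: $\lambda^{1/\beta}$ is also vacuous when $\beta=0$, though that case is trivial since then $M_\Gamma\equiv 1$ on $\Gamma$.)

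The missing ingredient is the boundary Harnack principle, which is precisely what manufactures the factor $M_\Gamma(\tilde x)$ in the paper's proof. There, with $\tilde x=x/|x|$, the event $\{\tau_\Gamma>|x|^{-\alpha}\}$ is split not by the terminal value of $M_\Gamma(X_s)$ but by the exit behaviour from the truncated cone $\Gamma_2$: either $\tau_{\Gamma_2}<\tau_\Gamma$ (the process jumps from $\Gamma_2$ into $\Gamma\setminus\Gamma_2$), or $\tau_{\Gamma_2}>|x|^{-\alpha}$. The first probability is compared to $\P_\one(\tau_{\Gamma_2}<\tau_\Gamma)\,M_\Gamma(\tilde x)$ by BHP \cite[Theorem 3.1]{MR1719233}; the second is handled by Markov's inequality together with $\E_{\tilde x}\tau_{\Gamma_2}\le c_2^{-1}\P_{\tilde x}(X_{\tau_{\Gamma_2}}\in\Gamma)$ (Green function plus the Ikeda--Watanabe formula), followed by BHP again. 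Both applications of BHP retain the decay in $\theta$ that your domination loses. Your proposed fallback via the factorization \eqref{eq:dhk_approx} could in principle restore that decay, but only for fat cones, whereas \eqref{eq:5} is asserted (and proved) for arbitrary cones; and even then it would require an absorption or bootstrap step to control $\int_{\{M_\Gamma<\lambda\}}p_s^\Gamma(\theta,z)\ud z$ uniformly, which you have not supplied.
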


	\begin{remark} (1) For $t=1$, \eqref{eq:5} reads as follows,
				\begin{equation}\label{M_P_comp}
				\P_x(\tau_{\Gamma}>1) \leq c(1+|x|^{\alpha-\beta})M_{\Gamma}(x), \quad x \in \Gamma.
			\end{equation}
(2) The estimate \eqref{eq:5} applies to arbitrary cones and arguments $t,x$, 
however, it is not optimal. For example, for the right-circular cones, we can confront \eqref{eq:rho_factor} with
		\[
		M_{\Gamma}(x) \approx \delta_{\Gamma}(x)^{\alpha/2}|x|^{\beta-\alpha/2}, \quad x \in \Gamma,
		\]
		and
		\[
		\P_x(\tau_{\Gamma}>1) \approx \big(1\wedge\delta_{\Gamma}(x)\big)^{\alpha/2}\big(1\wedge|x|\big)^{\beta-\alpha/2}, \quad x \in \Gamma,
		\]
as provided by \cite[Lemma 3.3]{Michalik06} and \cite[Example 7]{KBTGMR10}. \\
(3) For the right-circular cones, the ratio 
		\[
		\frac{\P_x(\tau_{\Gamma}>1)}{M_{\Gamma}(x)} \approx \frac{\big(1+\delta_{\Gamma}(x))^{-\alpha/2}}{(1+|x|)^{\beta-\alpha/2}}, \quad x \in \Gamma,
		\]
		is bounded if and only if $\beta \geq \alpha/2$.
	\end{remark}

	\begin{proof}[Proof of Lemma \ref{lem:sur_M_comp}]
		We slightly modify the proof of \cite[Lemma 4.2]{BB04}. First, suppose that $t=1$. The case $x \in \Gamma_1$ in \eqref{M_P_comp} is resolved by \cite[Lemma 4.2]{BB04}, so we assume that $x \in \Gamma \setminus \Gamma_1$.  For every $z \in \Rd \setminus \{0\}$ we define its projection on the unit sphere $\tilde{z} := z/|z|$. By \eqref{eq:surv_scaling},
		\[
		\P_x(\tau_{\Gamma}>1) = \P_{\tilde{x}}(\tau_{\Gamma}>|x|^{-\alpha}).
		\]
		Then we have
		\[
		\P_{\tilde{x}}(\tau_{\Gamma}>|x|^{-\alpha}) \leq \P_{\tilde{x}}(\tau_{\Gamma_2}>|x|^{-\alpha}) + \P_{\tilde{x}}(\tau_{\Gamma_2}<\tau_{\Gamma}).
		\]
		By the boundary Harnack principle (BHP), see Song and Wu \cite[Theorem 3.1]{MR1719233}, and the homogeneity of $M_{\Gamma}$ \eqref{eq:M_hom},
		\begin{equation}\label{eq:22}
			\P_{\tilde{x}}(\tau_{\Gamma_2}<\tau_{\Gamma}) \leq \P_{\one}(\tau_{\Gamma_2}<\tau_{\Gamma})M_{\Gamma}(\tilde{x}) = c_1 |x|^{-\beta}M_{\Gamma}(x).
		\end{equation}
		We let
		\[
		c_2 = \inf_{y \in \Gamma_2} \int_{\Gamma \setminus \Gamma_2} \nu(y-z)\ud z.
		\]
		Clearly, $c_2>0$. We recall the Ikeda-Watanabe formula: 
	\begin{align*}
		\P_x[\tau_D\in I,\; Y_{\tau_D-}\in A,\; Y_{\tau_D}\in B]=
		\int_I  \int_{A} \int_B  \, p_s^D(x,v)\nu(v,z)\ud z  \ud  v \ud  s,
	\end{align*}
	where $x\in D$, $I\subset [0,\infty)$, $A\subset D$ and $B\subset D^c$, see, e.g., Bogdan et al. \cite[Section~4.2]{MR3737628}. By Markov inequality and BHP,
		\begin{align*}
			\P_{\tilde{x}}\big(\tau_{\Gamma_2}>|x|^{-\alpha}\big) &\leq |x|^{\alpha} \E_{\tilde{x}}\tau_{\Gamma_2} = |x|^{\alpha} \int_{\Gamma_2}G_{\Gamma_2}(\tilde{x},y)\ud y \\ &\leq c_2^{-1} |x|^{\alpha} \int_{\Gamma \setminus \Gamma_2} \int_{\Gamma_2}G_{\Gamma_2}(\tilde{x},y) \nu(y-z) \ud y\ud z \\ &\leq c_2^{-1} |x|^{\alpha} \P_{\tilde{x}} \big(X_{\tau_{\Gamma_2}} \in \Gamma\big) \\ &\leq c_1c_2^{-1} |x|^{\alpha} \P_{\one} \big(X_{\tau_{\Gamma_2}} \in \Gamma\big) M_{\Gamma}(\tilde{x}) \\ &= c_1c_2^{-1}c |x|^{\alpha-\beta}M_{\Gamma}(x).
		\end{align*}
	By \eqref{eq:22}, we get \eqref{M_P_comp} when $x \in \Gamma \setminus \Gamma_1$. For arbitrary $t>0$, we use \eqref{eq:surv_scaling} and \eqref{M_P_comp}:
\begin{align*}
		\P_x(\tau_{\Gamma}>t) &= \P_{t^{-1/\alpha}x}(\tau_{\Gamma}>1) \\ &\leq c \left(1+\left(t^{-1/\alpha}|x|\right)^{\alpha-\beta}\right) M_{\Gamma}(t^{-1/\alpha}x)\\
		&= c\left(t^{-\beta/\alpha}+t^{-1}|x|^{\alpha-\beta}\right)M_{\Gamma}(x).
\end{align*}
	\end{proof}
	By the proof of \cite[Lemma 4.2]{BB04}, for every $R \in (0,\infty)$ there exists a constant $c$, depending only on $\alpha$, $\Gamma$ and $R$, such that 
	\begin{equation*}
		c^{-1}M_{\Gamma}(x)t^{-\beta/\alpha} \leq \P_x(\tau_{\Gamma}>t) \leq c M_{\Gamma}(x)t^{-\beta/\alpha}, \quad x \in \Gamma_{Rt^{1/\alpha}},\; t>0.
	\end{equation*}  
	In particular, for fat cones, in view of \eqref{eq:hk} and \rf{eq:rho_factor}, 
	\begin{equation}\label{eq:rho_bound}
		\rho_1(x,y) \approx (1+|y|)^{-d-\alpha} \frac{\P_y(\tau_{\Gamma}>1)}{M_{\Gamma}(y)}, \quad x\in \Gamma_R,\; y \in \Gamma,
	\end{equation}
	with comparability constant depending only on $\alpha$, $\Gamma$ and $R$.
 Using Lemma \ref{lem:sur_M_comp} we also conclude that for every $R \geq 1$ there is a constant $c$ depending only on $R$, $\alpha$ and $\Gamma$, such that
	\begin{equation}\label{eq:rho_bound2}
		\rho_1(x,y) \leq c(1+|y|)^{-d-\beta}, \quad x \in \Gamma_R, \;y \in \Gamma.
	\end{equation}

	\subsection{Ornstein-Uhlenbeck kernel}
	Encouraged by \cite{BJKP22}, we let
	\begin{equation}\label{eq:l_def}
		\ell_t(x,y) := \rho_{1-e^{-t}}(e^{-t/\alpha}x,y), \quad x,y \in \Gamma, \; t>0,
	\end{equation}
	and, by \eqref{eq:scaling2}, we get the Chapman-Kolmogorov property for $\ell_t$:
	\[
	\int_{\Gamma} \ell_t(x,y)\ell_s(y,z)M_{\Gamma}^2(y)\ud y = \ell_{t+s}(x,z), \quad x,z \in \Gamma, \; s,t>0.
	\]
	By \eqref{eq:rho_density},
	\begin{equation*}
		\int_{\Gamma} \ell_t(x,y)M_{\Gamma}^2(y)\ud y=1,\quad x \in \Gamma, \; t>0 .
		\end{equation*}
	Thus, $\ell_t$ is a transition probability density on $\Gamma$ with respect to $M_{\Gamma}^2(y)\ud y$.
	We define the corresponding Ornstein-Uhlenbeck semigroup:
	\[
	L_t f(y) = \int_{\Gamma} \ell_t(x,y) f(x)M_{\Gamma}^2(x)\ud x, \quad x\in \Gamma, t>0.
	\]
	We easily see that the operators are bounded on $L^1(M^2_\Gamma(y)\ud y)$. In fact, they preserve densities, i.e., functions $f\ge0$ such that $\int_\Gamma f(x)M^2_\Gamma(x)\ud x=1$.
	
	Before we immerse into details, let us note that the relations \eqref{eq:rho_bound} and \eqref{eq:rho_bound2} will be crucial in what follows. Both of them rely on the factorization of the Dirichlet heat kernel \eqref{eq:dhk_approx}, which is valid for fat sets. So, below in this section we assume (sometimes tacitly) that $\Gamma$ is a fat cone.
	\begin{theorem}\label{thm:stat_density}
		Assume $\Gamma$ is a fat cone. Then there is a unique stationary density $\vphi$ for the operators $L_t$, $t>0$.
	\end{theorem}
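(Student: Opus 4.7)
The plan is to apply the Schauder-Tychonoff fixed-point theorem. I would carve out a convex subset $K$ of the affine space of densities with respect to $M_\Gamma^2\ud x$ that is topologically compact and invariant under $L_1$, then upgrade the $L_1$-fixed point to a common fixed point of the whole semigroup and establish uniqueness.

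Guided by the approximate factorization \eqref{eq:rho_factor} and by Example~\ref{ex.pp} (where $\vphi(x)\approx(1+|x|)^{-d-\beta-\alpha}$ in the case $d=1$, $\beta=\alpha/2$), I would take as envelope $h(y)=(1+|y|)^{-d-\beta-\alpha}$; this lies in $L^1(M_\Gamma^2\ud y)$ since $\beta<\alpha$. Set $K=\{f\ge 0:\int_\Gamma f M_\Gamma^2\ud x=1,\ f\le Ch\}$ for a sufficiently large $C$. To verify $L_1 K\subseteq K$, I start from
\[
L_1 f(y)=\int_\Gamma \rho_{1-e^{-1}}(e^{-1/\alpha}x,y) f(x) M_\Gamma^2(x)\ud x,
\]
change variables $u=e^{-1/\alpha}x$, and apply the scaling \eqref{eq:scaling1} to rewrite the integral in terms of $\rho_1$. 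The bounds \eqref{eq:rho_bound} and \eqref{eq:rho_bound2}, combined with Lemma~\ref{lem:sur_M_comp}, should then yield $L_1 h\le Ch$ after splitting the $x$-integral into near- and far-field regions relative to $|y|$.

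The set $K$ is convex and weakly compact in $L^1(M_\Gamma^2\ud x)$ by the Dunford-Pettis theorem, the integrable envelope $h$ providing both uniform integrability and tightness; continuity of $L_1$ on $K$ in this topology follows from dominated convergence and the joint continuity of $\ell_1$. Schauder-Tychonoff then produces $\vphi\in K$ with $L_1\vphi=\vphi$. To promote this to a common fixed point of $\{L_t\}_{t>0}$, I would apply the Markov-Kakutani theorem to this commuting family (each $L_t$ preserves $K$, possibly after adjusting $C$), or equivalently average a fixed point of $L_1$ along a semigroup orbit. Uniqueness follows from the strict positivity of $\ell_t(x,y)$ on $\Gamma\times\Gamma$ via a standard irreducibility argument applied to the Markov chain with transition kernel $\ell_t(x,y)M_\Gamma^2(y)\ud y$. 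The main obstacle I anticipate is the envelope estimate $L_1 h\le Ch$: it requires a careful combination of the scaling \eqref{eq:scaling1}, the approximate factorization \eqref{eq:rho_factor}, and the survival-probability bound of Lemma~\ref{lem:sur_M_comp}, in order to reproduce both the behavior of $h$ at infinity and its positivity near the boundary of $\Gamma$; the rest is soft functional analysis.
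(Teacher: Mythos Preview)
Your overall strategy---Schauder--Tychonoff on a weakly compact convex set of densities in $L^1(M_\Gamma^2)$---matches the paper's, but your choice of invariant set has a genuine gap. The purely radial envelope $h(y)=(1+|y|)^{-d-\beta-\alpha}$ cannot work for all fat cones. Take any density $f\in K$ concentrated near a fixed interior point $x_0\in\Gamma_1$; then $L_1f(y)\approx \ell_1(x_0,y)=\rho_{1-e^{-1}}(e^{-1/\alpha}x_0,y)$, and by \eqref{eq:rho_bound} this is comparable to $(1+|y|)^{-d-\alpha}\,\P_y(\tau_\Gamma>1)/M_\Gamma(y)$. For right-circular cones with $\beta<\alpha/2$ one has (see the remark following Lemma~\ref{lem:sur_M_comp}) $\P_y(\tau_\Gamma>1)/M_\Gamma(y)\approx (1+\delta_\Gamma(y))^{-\alpha/2}(1+|y|)^{\alpha/2-\beta}$, so along rays with $\delta_\Gamma(y)$ bounded and $|y|\to\infty$ we get $L_1f(y)\gtrsim |y|^{-d-\alpha/2-\beta}$, which eventually exceeds $Ch(y)\approx C|y|^{-d-\alpha-\beta}$ for every fixed $C$. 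Hence $L_1K\not\subseteq K$. (There is also a smaller slip: an inequality $L_1h\le Ch$ with $C>1$ would not give $L_1K\subseteq K$ anyway; for $f\le Ch$ you only obtain $L_1f\le C^2h$.)

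The paper avoids the envelope problem altogether by taking as its invariant set the family
\[
F=\Big\{\,y\mapsto\int_{\Gamma_1}\rho_t(x,y)\,\mu(\od x)\ :\ \mu\text{ a sub-probability on }\Gamma_1\Big\};
\]
the inclusion $L_tF\subseteq F$ follows directly from the scaling \eqref{eq:scaling2}, with no envelope estimate required, and uniform integrability of $F$ comes for free from \eqref{eq:rho_bound}, whose right-hand side is the correct non-radial integrable dominant. Schauder--Tychonoff is then applied to the (weak $=$ norm) closure $\overline{F}$. Your Markov--Kakutani step is also unnecessary: once $L_1\vphi=\vphi$, commutativity gives $L_1(L_t\vphi)=L_t(L_1\vphi)=L_t\vphi$, and uniqueness of the $L_1$-fixed density forces $L_t\vphi=\vphi$ for every $t>0$.
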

	\begin{proof}
		Fix $t>0$ and consider the family $F$ of non-negative functions on $\Gamma$ that have the form
		\[
		f(y) = \int_{\Gamma_1}\rho_t(x,y)\,\mu(\od x), \quad y \in \Gamma,
		\] 
		for some sub-probability measure $\mu$ concentrated on $\Gamma_1$. By \eqref{eq:rho_density}, $F \subseteq L^1(M_{\Gamma}^2(y))$. By the scaling \eqref{eq:scaling2} and the same reasoning as in the proof of \cite[Theorem 3.2]{BJKP22}, $L_t F \subseteq F$. Since $L_t$ is continuous, we also have $L_t \overline{F} \subseteq \overline{F}$, where $\overline{F}$ is the closure of $F$ in the norm topology of $L^1(\Gamma,M^2_\Gamma(y)\ud y)$. Next, we observe that $F$ is convex, therefore by \cite[Theorem 3.7]{Brezis11}, $\overline{F}$ is equal to the closure of $F$ in the weak topology. In view of \eqref{eq:rho_bound},
		%
		%
		\begin{equation}\label{eq:4}
			f(y) \lesssim (1+|y|)^{-d-\alpha} \frac{\P_y(\tau_{\Gamma}>1)}{M_{\Gamma}(y)}, \quad y \in \Gamma,
		\end{equation}
		uniformly for $f\in F$. Moreover, \eqref{eq:rho_density} and \eqref{eq:rho_bound} show that the right-hand side of \eqref{eq:4} is integrable with respect to $M_{\Gamma}^2(y)\ud y$. Therefore, the family $F$ is uniformly integrable with respect to $M^2_\Gamma(y)\ud y$. By \cite[Theorem 4.7.20]{Bogachev07}, $F$ is weakly pre-compact in $L^1(M^2_\Gamma(y))$, so $\overline{F}$ is weakly compact.
		%
		Furthermore, we invoke \cite[Theorem 3.10]{Brezis11} to conclude that $L_t$ is weakly continuous. By the Schauder-Tychonoff fixed point theorem \cite[Theorem 5.28]{Rudin91},
		there is a density $\vphi \in \overline{F}$ satisfying $L_t \vphi = \vphi$. It is unique by the strict positivity of the kernel $\ell_t$, and the same for every $t>0$, see the proof of \cite[Theorem 3.2]{BJKP22}.
	\end{proof}
	Let us note that by Theorem \ref{thm:stat_density} and \cite[Theorem 1 and Remark 2]{KulikScheutzow15}, the following stability result for kernels $l_t$ in $L^1(M_{\Gamma}^2(y)\ud y)$ holds true for every $x\in \Gamma$:
	\begin{equation}\label{eq:14}
		\int_{\Gamma} \big| \ell_t(x,y)-\vphi(y) \big| M_{\Gamma}^2(y)\ud y \to 0 \quad \text{as} \quad t \to \infty.
	\end{equation}
	We claim that the convergence in \eqref{eq:14} is in fact uniform for $x$ in any bounded subset $A \subseteq \Gamma$. Indeed, let $x,x_0 \in A$. In view of \eqref{eq:l_def} and \eqref{eq:rho_factor} we may write
	\begin{align}
		\nonumber
		\int_{\Gamma} \big| \ell_{1+t}(x,y) - \vphi(y) \big| M_{\Gamma}^2(y)\ud y &= \int_{\Gamma} \bigg| \int_{\Gamma}\ell_1(x,z) \big( \ell_t(z,y) - \vphi(y) \big) M_{\Gamma}^2(z)\ud z \bigg| M_{\Gamma}^2(y)\ud y \\ 
		\label{eq:15}
		&\leq c \int_{\Gamma} \ell_1(x_0,z) \int_{\Gamma} \big|  \ell_t(z,y) - \vphi(y)\big| M_{\Gamma}^2(y)\ud y\, M_{\Gamma}^2(z)\ud z.
	\end{align}
By \eqref{eq:14}, for every $z\in \Gamma$, 
	\[
	I_t(z):=\int_{\Gamma} \big| \ell_t(z,y) - \vphi(y) \big| M_{\Gamma}^2(y)\ud y \to 0 \quad \text{as} \quad t \to \infty.
	\]
	Moreover, $I_t(z) \leq \int_{\Gamma} \big( \ell_t(z,y) + \vphi(y) \big) M_{\Gamma}^2(y)\ud y = 2$. Since
	\[
	\int_{\Gamma} 2\ell_1(x_0,z)M_{\Gamma}^2(z)\ud z = 2 < \infty,
	\]
	by the dominated convergence theorem the iterated integral in \eqref{eq:15} tends to $0$ as $t \to \infty$, so the convergence in \eqref{eq:14} is uniform for all $x \in A$, as claimed. By rewriting \eqref{eq:14} in terms of $\rho$, we get that, uniformly for $x \in A$,
	\begin{equation}\label{eq:16}
		\int_{\Gamma} \Big| \rho_{1-e^{-t}} \big( e^{-t/\alpha}x,y \big) - \vphi(y) \Big| M_{\Gamma}^2(y)\ud y \to 0 \quad \text{as} \quad t \to \infty.
	\end{equation}
This leads to	the following
spacial asymptotics for $\rho_1$.
	\begin{corollary}\label{lem:1}
		Let $\Gamma$ be a fat cone. If $\Gamma \ni x \to 0$ then $\int_{\Gamma} \big| \rho_1(x,y)-\vphi(y) \big|M_{\Gamma}^2(y)\ud y \to 0$.
	\end{corollary}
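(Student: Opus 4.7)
The plan is to reduce the desired convergence as $\Gamma\ni x\to 0$ to the uniform convergence in \eqref{eq:16}, bridging the mismatch in both the time parameter and the spatial scale via the scaling relation \eqref{eq:scaling1}, Scheff\'e's lemma, and a final reparameterization of $x$ in terms of the OU-time $t$. Concretely, applying \eqref{eq:scaling1} with $s=1-e^{-t}$ yields
\[
\rho_{1-e^{-t}}(e^{-t/\alpha}x,y)=\lambda_t^{d+2\beta}\,\rho_1(\mu_t x,\lambda_t y),
\]
where $\lambda_t:=(1-e^{-t})^{-1/\alpha}\to 1$ and $\mu_t:=(e^t-1)^{-1/\alpha}\to 0$ as $t\to\infty$. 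Substituting $z=\lambda_t y$ in \eqref{eq:16} and using the homogeneity $M_\Gamma(z/\lambda_t)=\lambda_t^{-\beta}M_\Gamma(z)$ from \eqref{eq:M_hom}, I obtain, uniformly for $x$ in any bounded subset $A\subseteq\Gamma$,
\[
\int_\Gamma \bigl|\rho_1(\mu_t x,z)-\vphi_t(z)\bigr|\, M_\Gamma^2(z)\ud z\to 0 \quad\text{as } t\to\infty,
\]
where $\vphi_t(z):=\lambda_t^{-(d+2\beta)}\vphi(z/\lambda_t)$.

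Next, I would establish that $\int_\Gamma|\vphi_t-\vphi|\,M_\Gamma^2\ud z\to 0$ via Scheff\'e's lemma: the same change of variables shows that $\vphi_t$ is a probability density with respect to $M_\Gamma^2(z)\ud z$, as is $\vphi$, and $\vphi_t(z)\to\vphi(z)$ pointwise because $\lambda_t\to 1$ and $\vphi$ is continuous. The continuity of $\vphi$ is the one ingredient not explicitly recorded above; it follows from the fixed-point equation $\vphi=L_t\vphi$ by combining the joint continuity of $\ell_t(x,y)$ with the majorant provided by \eqref{eq:rho_bound2} (together with scaling) and the dominated convergence theorem in $y$. The triangle inequality then upgrades the previous display to, uniformly in $x\in A$,
\[
\int_\Gamma \bigl|\rho_1(\mu_t x,z)-\vphi(z)\bigr|\, M_\Gamma^2(z)\ud z\to 0 \quad\text{as } t\to\infty.
\]

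To conclude, I would take $A:=\Gamma\cap\Sd$, which is bounded, and for $w\in\Gamma$ set $x_w:=w/|w|\in A$ and $t_w:=\log(1+|w|^{-\alpha})$; a direct computation gives $\mu_{t_w}=|w|$, so that $\mu_{t_w}x_w=w$, while $t_w\to\infty$ as $w\to 0$. The uniform convergence from the previous display then delivers the corollary. The main technical hurdle is the continuity of $\vphi$ needed for Scheff\'e, which is tacit but routine; the remainder of the argument is bookkeeping with scaling and the triangle inequality.
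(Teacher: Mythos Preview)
Your argument is correct and follows the same scaffold as the paper's proof: rewrite $\rho_{1-e^{-t}}(e^{-t/\alpha}x,y)$ via scaling, change variables to land on $\rho_1(\mu_t x,\cdot)$, show the dilated density $\vphi_t$ converges to $\vphi$ in $L^1(M_\Gamma^2)$, apply the triangle inequality, and reparameterize $x=\mu_t\cdot(x/|x|)$.

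The one substantive difference is in how you handle $\int_\Gamma|\vphi_t-\vphi|M_\Gamma^2\,\ud z\to 0$. You invoke Scheff\'e's lemma, which forces you to establish continuity of $\vphi$ up front; you sketch this correctly via $\vphi=L_1\vphi$, the majorant from \eqref{eq:rho_bound2}, and dominated convergence. The paper instead appeals directly to the continuity of dilations in $L^1(\Rd)$ applied to $\vphi M_\Gamma^2\in L^1$, which needs no regularity of $\vphi$ at all. This is cleaner and avoids what in the paper's ordering would be a forward reference (continuity of $\vphi$ is proved only afterward, in Lemma~\ref{lem:vphi_redef}). Your route is not circular, since your continuity argument does not rely on the corollary itself, but it is a detour you can eliminate.
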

	
	\begin{proof}
		By the scalings \eqref{eq:scaling1} and \rf{eq:scaling2},
		\[
		\rho_{1-e^{-t}} \big( e^{-t/\alpha}x,y \big) = \big( 1-e^{-t} \big)^{-(d+2\beta)/\alpha} \rho_1 \Big( \big( e^t-1 \big)^{-1/\alpha}x, \big( 1-e^{-t} \big)^{-1/\alpha}y \Big),
		\]
		thus, in view of \eqref{eq:16},
		\begin{equation}\label{eq:17}
			\int_{\Gamma} \bigg| \rho_1 \Big( \big( e^t-1 \big)^{-1/\alpha}x, \big( 1-e^{-t} \big)^{-1/\alpha}y \Big) - \vphi(y) \bigg| M_{\Gamma}^2(y)\ud y \to 0 \quad \text{as} \quad t \to \infty.
		\end{equation}
	By the continuity of dilations in $L^1(\Rd)$,
		\begin{equation*}
			\int_{\Gamma} \bigg| \vphi \Big( \big( 1-e^{-t} \big)^{1/\alpha}y \Big) M_{\Gamma}^2(y) - \vphi(y)M_{\Gamma}^2(y) \bigg|\ud y \to 0 \quad \text{as} \quad t \to \infty.
		\end{equation*}
		Thus, by a change of variables in \eqref{eq:17} and the triangle inequality, we conclude that
		\[
		\int_{\Gamma} \bigg| \rho_1 \Big( \big(e^t-1\big)^{-1/\alpha}z,y \Big) - \vphi(y) \bigg|M_{\Gamma}^2(y)\ud y \to 0 \quad \text{as} \quad t \to \infty
		\]
		uniformly for all $z \in A$. To end the proof, we take $A = B_1$ and $x = \big(e^t-1\big)^{-1/\alpha}z$, where $t = \ln \big(1+|x|^{-\alpha}\big)$ and $z = x/|x| \in A$. .
	\end{proof}
	\begin{lemma}\label{lem:vphi_redef}
		After a modification on set of Lebesgue measure $0$, $\vphi$ is continuous on $\Gamma$ and 
		\[
		\vphi(y) \approx (1+|y|)^{-d-\alpha} \frac{\P_y(\tau_{\Gamma}>1)}{M_{\Gamma}(y)}, \quad y \in \Gamma.
		\]
	\end{lemma}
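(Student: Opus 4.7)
The plan is to exploit the fixed-point identity $\vphi = L_t\vphi$ together with the two-sided kernel estimate \eqref{eq:rho_bound}: first establish the pointwise bounds for $\vphi$ almost everywhere on $\Gamma$, and then redefine $\vphi$ on a null set by an integral formula that is manifestly continuous.

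For the upper bound, apply Theorem~\ref{thm:stat_density} with parameter $t=1$: the convex family $F$ is weakly pre-compact in $L^1(M_\Gamma^2\ud y)$ and $\vphi$ lies in its weak closure. Since weak and strong closures of convex subsets of $L^1$ coincide \cite[Theorem 3.7]{Brezis11}, there is a sequence $f_n\in F$ converging to $\vphi$ in $L^1(M_\Gamma^2\ud y)$, and a subsequence converges pointwise a.e. Each $f_n(y)=\int_{\Gamma_1}\rho_1(x,y)\,\mu_n(\od x)$ with $\mu_n(\Gamma_1)\le 1$, so the uniform upper bound in \eqref{eq:rho_bound} for $x\in\Gamma_1$ yields $f_n(y)\lesssim (1+|y|)^{-d-\alpha}\P_y(\tau_\Gamma>1)/M_\Gamma(y)$; passing to the a.e.\ limit gives the desired upper estimate. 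For the lower bound, use the invariance $\vphi=L_1\vphi$ and truncate the integration to $x \in \Gamma_R$, so that $e^{-1/\alpha}x$ stays in a bounded subset of $\Gamma$; scaling \eqref{eq:scaling1} combined with the lower bound in \eqref{eq:rho_bound} (and $\P_y(\tau_\Gamma>1-e^{-1})\ge \P_y(\tau_\Gamma>1)$) produces
\[
\rho_{1-e^{-1}}(e^{-1/\alpha}x,y) \gtrsim (1+|y|)^{-d-\alpha}\, \P_y(\tau_\Gamma>1)/M_\Gamma(y)
\]
uniformly in $x\in\Gamma_R$. Since $\int_\Gamma \vphi\, M_\Gamma^2\ud x=1$, one has $\int_{\Gamma_R}\vphi\, M_\Gamma^2\ud x \ge 1/2$ for $R$ large enough, which yields the matching lower bound a.e.

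For continuity, redefine $\vphi$ on the exceptional null set by
\[
\tilde\vphi(y) := L_1 \vphi(y) = \int_\Gamma \ell_1(x,y)\,\vphi(x)M_\Gamma^2(x)\ud x,
\]
so that $\tilde\vphi=\vphi$ almost everywhere by the fixed-point identity. The kernel $\ell_1(x,y)=\rho_{1-e^{-1}}(e^{-1/\alpha}x,y)$ is jointly continuous on $\Gamma\times\Gamma$. For $y$ in a compact set $K\subset\Gamma$, symmetry of $\rho$ combined with \eqref{eq:rho_bound2} and the scaling \eqref{eq:scaling1} yields $\ell_1(x,y)\lesssim (1+|x|)^{-d-\beta}$ uniformly in $y\in K$, while the upper bound just established, together with \eqref{eq:M_hom} and $\P_x(\tau_\Gamma>1)\le 1$, gives $\vphi(x)M_\Gamma^2(x)\lesssim (1+|x|)^{-d-\alpha+\beta}$. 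Multiplying produces the integrable dominator $(1+|x|)^{-2d-\alpha}$, and dominated convergence shows continuity of $\tilde\vphi$ on $\Gamma$.

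The main obstacle is transferring the pointwise upper bound enjoyed by elements of $F$ to the stationary density $\vphi$, which \emph{a priori} is only a weak $L^1$ limit; this is circumvented by the convexity-plus-Brezis argument, which upgrades weak to strong $L^1$ convergence and hence supplies an a.e.\ convergent subsequence. Thereafter the scaling identities and dominated convergence finish the job.
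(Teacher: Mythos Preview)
Your argument is correct; the architecture (prove the two-sided estimate a.e., then redefine $\vphi:=L_1\vphi$ and use dominated convergence for continuity) matches the paper's. The difference lies in how the two-sided estimate is obtained. The paper first establishes Corollary~\ref{lem:1}, namely that $\rho_1(x,\cdot)\to\vphi$ in $L^1(M_\Gamma^2)$ as $\Gamma\ni x\to 0$; since $\rho_1(x,\cdot)$ satisfies both sides of \eqref{eq:rho_bound} uniformly for small $x$, an a.e.\ convergent subsequence transfers both bounds to $\vphi$ in one stroke. This is concise but relies on the Kulik--Scheutzow stability input \eqref{eq:14}. You instead treat the two bounds separately: the upper bound by revisiting $\vphi\in\overline F$ from the proof of Theorem~\ref{thm:stat_density} and passing the uniform pointwise bound \eqref{eq:4} on $F$ through a strongly $L^1$-convergent sequence (convexity plus \cite[Theorem 3.7]{Brezis11}), and the lower bound directly from $\vphi=L_1\vphi$ restricted to $\Gamma_R$. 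Your route is slightly longer but avoids Corollary~\ref{lem:1} and \eqref{eq:14} altogether, making it more self-contained at this point of the argument. For the continuity step your dominator $(1+|x|)^{-2d-\alpha}$, built from the symmetric form of \eqref{eq:rho_bound2} and the upper bound on $\vphi M_\Gamma^2$, is perfectly adequate; the paper instead bounds $\ell_1(x,y)$ by a multiple of $(1+|x|)^{-d-\alpha}\P_x(\tau_\Gamma>1)/M_\Gamma(x)$ via \eqref{eq:rho_factor}, \eqref{eq:hk} and \eqref{eq:5}, which is integrable against $\vphi M_\Gamma^2$ by \eqref{eq:rho_density}. Either domination suffices.
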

	\begin{proof}
By Corollary \ref{lem:1} and \eqref{eq:rho_bound}, 
		\[
		\vphi(y) \approx (1+|y|)^{-d-\alpha} \frac{\P_y(\tau_{\Gamma}>1)}{M_{\Gamma}(y)}
		\]
on $\Gamma$ less a set of Lebesgue measure zero. 		
		Theorem \ref{thm:stat_density} entails that $\vphi = L_1\vphi$ $a.e.$, so it suffices to verify that $L_1\vphi$ is continuous on $\Gamma$. To this end we note that $\ell_1(x,y)$ is continuous in $x,y \in \Gamma$. Next, by \eqref{eq:l_def} and \eqref{eq:rho_factor},
		\begin{align*}
			\ell_1(x,y) \approx \frac{\P_{e^{-1/\alpha}x}\big(\tau_{\Gamma}>1-e^{-1}\big)}{M_{\Gamma}\big(e^{-1/\alpha}x\big)}p_{1-e^{-1}}\big( e^{-1/\alpha}x,y \big) \frac{\P_y\big(\tau_{\Gamma}>1-e^{-1}\big)}{M_{\Gamma}(y)}, \quad x,y \in \Gamma.
		\end{align*}
		Let $R>1$. By \eqref{eq:hk} and \eqref{eq:5},
		\cite[Remark 3]{KBTGMR10} and \eqref{eq:surv_scaling}, and the homogeneity \eqref{eq:M_hom} of $M_{\Gamma}$, 
		\[
		\ell_1(x,y) \lesssim (1+|x|)^{-d-\alpha} \frac{\P_x(\tau_{\Gamma}>1)}{M_{\Gamma}(x)}, \quad x \in \Gamma, y \in \Gamma_R.
		\]
		By the dominated convergence theorem, $L_1\vphi$ is continuous on $\Gamma_R$. 
	\end{proof}
	In what follows, $\vphi$ denotes the continuous modification from Lemma \ref{lem:vphi_redef}.
	\begin{theorem}\label{thm:rho_conv}
		Let $\Gamma$ be a fat cone. For every $t>0$, uniformly in $y \in \Gamma$ we have
		\[
	\rho_t(0,y):=
		\lim_{\Gamma \ni x \to 0} \rho_t(x,y) = t^{-(d+2\beta)/\alpha}\vphi\big(t^{-1/\alpha}y\big).
		\]
	\end{theorem}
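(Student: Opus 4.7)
The scaling relation \eqref{eq:scaling1} reduces the claim to the case $t=1$: it will suffice to show that $\rho_1(x,y) \to \vphi(y)$ uniformly in $y\in\Gamma$ as $\Gamma \ni x \to 0$. My plan is to bootstrap the $L^1$-statement of Corollary~\ref{lem:1} to uniform pointwise convergence by invoking the Chapman--Kolmogorov identity \eqref{eq:rho_ChK} at time $1=\epsilon+(1-\epsilon)$,
\begin{equation*}
\rho_1(x,y) = \int_\Gamma \rho_\epsilon(x,z)\, \rho_{1-\epsilon}(z,y)\, M_\Gamma^2(z)\ud z,
\end{equation*}
for a fixed $\epsilon\in(0,1)$, and passing to the limit $\Gamma \ni x \to 0$ on the right-hand side.

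First, I will use the scaling \eqref{eq:scaling1} together with Corollary~\ref{lem:1} to obtain, as $\Gamma \ni x \to 0$,
\begin{equation*}
\int_\Gamma \bigl| \rho_\epsilon(x,z) - \epsilon^{-(d+2\beta)/\alpha}\vphi(\epsilon^{-1/\alpha}z) \bigr|\, M_\Gamma^2(z)\ud z \to 0.
\end{equation*}
Second, for any fixed $N>0$, I will combine the symmetry and scaling of $\rho_{1-\epsilon}$ with \eqref{eq:rho_bound2} to get a uniform bound $\rho_{1-\epsilon}(z,y) \le C_{\epsilon,N}$ for $(z,y)\in\Gamma\times\Gamma_N$. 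Pairing $L^1$ with $L^\infty$ will then give uniform convergence in $y\in\Gamma_N$ of $\rho_1(x,y)$ to $\int_\Gamma \epsilon^{-(d+2\beta)/\alpha}\vphi(\epsilon^{-1/\alpha}z)\,\rho_{1-\epsilon}(z,y)\,M_\Gamma^2(z)\ud z$. To identify this limit with $\vphi(y)$, I will substitute $z=\epsilon^{1/\alpha}u$: since $M_\Gamma^2(\epsilon^{1/\alpha}u)=\epsilon^{2\beta/\alpha}M_\Gamma^2(u)$, the prefactors cancel and the integral reduces to $\int_\Gamma \vphi(u)\,\rho_{1-\epsilon}(\epsilon^{1/\alpha}u,y)\,M_\Gamma^2(u)\ud u$. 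Choosing $t=-\ln\epsilon$, the definition \eqref{eq:l_def} gives $\rho_{1-\epsilon}(\epsilon^{1/\alpha}u,y)=\ell_t(u,y)$, so the integral equals $L_t\vphi(y)=\vphi(y)$ by Theorem~\ref{thm:stat_density}.

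Finally, to upgrade from uniformity on $\Gamma_N$ to uniformity on the whole of $\Gamma$, I will rely on common tail bounds: \eqref{eq:rho_bound2} gives $\rho_1(x,y)\lesssim(1+|y|)^{-d-\beta}$ uniformly in $x\in\Gamma_1$, and Lemma~\ref{lem:vphi_redef} combined with Lemma~\ref{lem:sur_M_comp} yields the same decay for $\vphi(y)$. Given $\eta>0$, I will first fix $N$ large enough that $\sup_{y\in\Gamma\setminus\Gamma_N}|\rho_1(x,y)-\vphi(y)|<\eta/2$ for all $x\in\Gamma_1$, and then the preceding step handles $y\in\Gamma_N$ for $x$ sufficiently close to $0$. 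The main obstacle is precisely the passage from the $L^1$-convergence of Corollary~\ref{lem:1} to the required uniform pointwise convergence; the time-split $\epsilon+(1-\epsilon)$ together with the change of variables $z=\epsilon^{1/\alpha}u$ is engineered to match the time-parameter of the Ornstein--Uhlenbeck kernel $\ell_t$ at $t=-\ln\epsilon$, so that the invariance $L_t\vphi=\vphi$ cleanly pins down the limit.
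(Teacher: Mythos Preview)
Your proof is correct and follows essentially the same strategy as the paper: split the time via Chapman--Kolmogorov, use Corollary~\ref{lem:1} for the first factor in $L^1(M_\Gamma^2)$, bound the second factor in $L^\infty$ via \eqref{eq:rho_bound2} on bounded $y$-sets, and identify the limit through the stationarity $L_t\vphi=\vphi$.

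Two minor differences are worth noting. First, the paper rescales $\rho_1(x,y)=2^{(d+2\beta)/\alpha}\rho_2(2^{1/\alpha}x,2^{1/\alpha}y)$ and then splits $\rho_2=\rho_1\ast\rho_1$, which is your argument with $\epsilon=1/2$ after a change of variables; your direct split $1=\epsilon+(1-\epsilon)$ is slightly cleaner since the substitution $z=\epsilon^{1/\alpha}u$ lands exactly on $\ell_{-\ln\epsilon}$. Second, for large $|y|$ the paper proves the integral estimate $\int_\Gamma |\rho_1(2^{1/\alpha}x,z)-\vphi(z)|\,\rho_1(z,2^{1/\alpha}y)\,M_\Gamma^2(z)\ud z\lesssim (1+|y|)^{-\beta}$ via a dyadic splitting near $y$, which forces them to treat $\beta=0$ separately. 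Your tail step instead bounds $\rho_1(x,y)$ and $\vphi(y)$ individually by $(1+|y|)^{-d-\beta}$ (from \eqref{eq:rho_bound2} and Lemma~\ref{lem:vphi_redef} with \eqref{M_P_comp}); since $d\ge 1$ this decays regardless of $\beta$, so no case distinction is needed. This is a genuine, if small, simplification.
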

	
	\begin{proof}
		If $\beta=0$ then $\rho_t(x,y) = p_t(x,y)$ and the claim is simply the continuity property of the heat kernel $p_t$. Thus, we assume that $\beta>0$. 
		
		We only prove the claim for $t=1$; the extension to arbitrary $t$ is a consequence of the scaling \eqref{eq:scaling1}. 
		By \eqref{eq:scaling2} and the Chapman-Kolmogorov property, for $x,y \in \Gamma$,
		\begin{align*}
			\rho_1(x,y) &= 2^{(d+2\beta)/\alpha}\rho_2 \big( 2^{1/\alpha}x,2^{1/\alpha}y \big) \\ &= 2^{(d+2\beta)/\alpha} \int_{\Gamma} \rho_1 \big( 2^{1/\alpha}x,z \big) \rho_1 \big( z,2^{1/\alpha}y \big) M_{\Gamma}^2(z)\ud z.
		\end{align*}
	We will prove that, uniformly in $y \in \Gamma$,
		\begin{equation}\label{eq:6}
			\int_{\Gamma} \rho_1 \big( 2^{1/\alpha}x,z \big) \rho_1 \big( z,2^{1/\alpha}y \big) M_{\Gamma}^2(z)\ud z \to \int_{\Gamma} \vphi(z) \rho_1 \big( z,2^{1/\alpha}y \big) M_{\Gamma}^2(z)\ud z
		\end{equation}
	 as $\Gamma \ni x \to 0$. 
		To this end we first claim that there is $c \in (0,\infty)$ dependent only on $\alpha$ and $\Gamma$, such that for all $x \in \Gamma_1$ and $y \in \Gamma$,
		\begin{equation}\label{eq:23}
		\int_{\Gamma} \big| \rho_1 \big( 2^{1/\alpha}x,z \big)-\vphi(z) \big| \rho_1 \big( z,2^{1/\alpha}y \big) M_{\Gamma}^2(z)\ud z \leq c(1+|y|)^{-\beta}.
		\end{equation}
		Indeed, denote $\tilde{y} = 2^{1/\alpha}y$. By \eqref{eq:rho_factor}, Lemma \ref{lem:vphi_redef} and \eqref{M_P_comp}, there is $c>0$ such that for all $z,y \in \Gamma$ and $x \in \Gamma_1$,
		\[
		\big| \rho_1 \big( 2^{1/\alpha}x,z \big)-\vphi(z) \big| \rho_1 \big( z,\widetilde{y} \big) M_{\Gamma}^2(z) \lesssim (1+|z|)^{-d-\alpha}(1+|z-\widetilde{y}|)^{-d-\alpha} (1+|y|)^{\alpha-\beta}.
		\]
		We split the integral in \eqref{eq:23} into two integrals. For $z\in A:=B(\widetilde{y},|\widetilde{y}|/2)$ we use the fact that $|z| \approx |\widetilde{y}| \approx |y|$ and $1+|z-\widetilde{y}| \geq 1$, therefore
		\begin{align}\label{eq:24}
			\int_A \big| \rho_1 \big( 2^{1/\alpha}x,z \big)-\vphi(z) \big| \rho_1 \big( z,\widetilde{y} \big) M_{\Gamma}^2(z)\ud z &\lesssim |y|^{d} (1+|y|)^{-d-\beta} \leq (1+|y|)^{-\beta}. 
		\end{align}
	For $z\in \Gamma \setminus A$ we simply have $1+|z| \geq 1$, thus,
		\begin{align*}
			\int_{\Gamma \setminus A} \big| \rho_1 \big( 2^{1/\alpha}x,z \big)-\vphi(z) \big| \rho_1 \big( z,\widetilde{y} \big) M_{\Gamma}^2(z)\ud z &\lesssim (1+|y|)^{\alpha-\beta} \int_{\Gamma \setminus A} (1+|z-\widetilde{y}|)^{-d-\alpha} \ud z \\ &\lesssim (1+|y|)^{\alpha-\beta} \int_{|\widetilde{y}|/2}^{\infty} (1+r)^{-1-\alpha}\ud r \\ &\lesssim (1+|y|)^{-\beta}.
		\end{align*}
		Combining it with \eqref{eq:24}, we arrive at \eqref{eq:23}, as claimed.
		
		Let $\epsilon>0$. In view of \eqref{eq:23} and the fact that $\beta > 0$, there is $R\in (0,\infty)$ depending only on $\alpha$, $\beta$, $\Gamma$ and $\epsilon$ such that
		\begin{equation}\label{eq:7}
			\int_{\Gamma} \big| \rho_1 \big( 2^{1/\alpha}x,z \big)-\vphi(z) \big| \rho_1 \big( z,2^{1/\alpha}y \big) M_{\Gamma}^2(z)\ud z < \epsilon,
		\end{equation}
		provided that $y \in \Gamma \setminus \Gamma_R$. For $y \in \Gamma_R$, by \eqref{eq:rho_bound2} we get
		\begin{equation*}
			\int_{\Gamma} \big| \rho_1 \big( 2^{1/\alpha}x,z \big)-\vphi(z) \big| \rho_1 \big( z,2^{1/\alpha}y \big) M_{\Gamma}^2(z)\ud z \lesssim \int_{\Gamma} \big| \rho_1 \big( 2^{1/\alpha}x,z \big)-\vphi(z) \big| M_{\Gamma}^2(z)\ud z,
		\end{equation*}
		with the implied constant dependent only on $\alpha$, $\beta$, $\Gamma$ and $R$, but not otherwise dependent of $y$. Thus, by Corollary \ref{lem:1},
		\begin{equation}\label{eq:8}
			\int_{\Gamma} \big| \rho_1 \big( 2^{1/\alpha}x,z \big)-\vphi(z) \big| \rho_1 \big( z,2^{1/\alpha}y \big) M_{\Gamma}^2(z)\ud z < \epsilon
		\end{equation}
		for all $y \in \Gamma_R$ and $x \in \Gamma_1$ small enough. Putting \eqref{eq:8} together with \eqref{eq:7} we arrive at \eqref{eq:6}. Using the scaling property \eqref{eq:scaling2} and Theorem \ref{thm:stat_density},
		\begin{align*}
			\lim_{\Gamma \ni x \to 0} \rho_1(x,y) &= 2^{(d+2\beta)/\alpha}\int_{\Gamma} \vphi(z) \rho_1 \big( z,2^{1/\alpha}y \big) M_{\Gamma}^2(z)\ud z \\ &= \int_{\Gamma} \vphi(z) \rho_{1/2} \big( 2^{-1/\alpha}z,y \big) M_{\Gamma}^2(z)\ud z = L_{\ln 2} \vphi(y) = \vphi(y).
		\end{align*}
		The proof is complete. 
	\end{proof}

Note that by the symmetry of $\rho_t$, for $x \in \Gamma$,
\[
\rho_t(x,0) := \lim_{\Gamma \ni y \to 0}\rho_t(x,y) = \rho_t(0,x) = t^{-(d+2\beta)/\alpha} \vphi \big(t^{-1/\alpha}x\big).
\]

	Recall also that by \eqref{eq:rho_bound} and \eqref{eq:rho_density},
	\[
	\rho_1(x,y) \approx (1+|y|)^{-d-\alpha} \frac{\P_y(\tau_{\Gamma}>1)}{M_{\Gamma}(y)} \in L^1(M_{\Gamma}^2(y)\ud y).
	\]
	Thus, by Theorem \ref{thm:rho_conv} and the dominated convergence theorem,
	\begin{equation}\label{eq:vphi_density}
		\int_{\Gamma} \vphi(x)M_{\Gamma}^2(x)\,\ud x=1.
	\end{equation}

	Let us summarize the results of this section in one statement.
	\begin{theorem}\label{thm:rho_ex}
		Assume $\Gamma$ is a fat cone. Then the function $\rho$ has a continuous extension to $(0,\infty) \times (\Gamma \cup \{0\})\times (\Gamma \cup \{0\})$ and
		\begin{equation}\label{eq:27}
			\rho_t(0,y) := \lim_{\Gamma \ni x \to 0} \rho_t(x,y)\in (0,\infty), \quad t>0,\,y \in \Gamma,
		\end{equation}
		satisfies
		\begin{equation}\label{eq:28}
			\rho_t(0,y) = t^{-(d+2\beta)/\alpha} \rho_1(0,t^{-1/\alpha}y), \quad t>0, \,y \in \Gamma,
		\end{equation}
		and
		\begin{equation}\label{eq:29}
			\int_{\Gamma} \rho_t(0,y)\rho_s(y,z) M_{\Gamma}^2(y)\ud y = \rho_{t+s}(0,z), \quad s,t>0,\,z \in \Gamma.
		\end{equation}
	\end{theorem}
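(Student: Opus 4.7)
The plan is to synthesize the machinery already built in this section. For $y\in\Gamma$ and $t>0$, the limit in \eqref{eq:27} is furnished by Theorem~\ref{thm:rho_conv}, which also identifies $\rho_t(0,y)=t^{-(d+2\beta)/\alpha}\vphi(t^{-1/\alpha}y)$; in particular $\rho_1(0,w)=\vphi(w)$, so \eqref{eq:28} is immediate from the definition. By the symmetry of $\rho_t$ on $\Gamma\times\Gamma$, I will extend by $\rho_t(x,0):=\rho_t(0,x)$ for $x\in\Gamma$, and postpone the definition of $\rho_t(0,0)$ until after \eqref{eq:29} is in hand.

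For \eqref{eq:29}, I will start from the Chapman--Kolmogorov identity \eqref{eq:rho_ChK} with $x\in\Gamma$ and pass to the limit $\Gamma\ni x\to 0$. Pointwise convergence $\rho_t(x,y)\to\rho_t(0,y)$ in $y\in\Gamma$ is exactly Theorem~\ref{thm:rho_conv}; for the dominating function, scaling \eqref{eq:scaling1} and the bound \eqref{eq:rho_bound2} give, for $x$ in any fixed bounded subset of $\Gamma$ and fixed $t$, an estimate $\rho_t(x,y)\leq c_t$ uniformly in $y\in\Gamma$, so the integrand in \eqref{eq:rho_ChK} is bounded by $c_t\,\rho_s(y,z)M_\Gamma^2(y)$, which is integrable in $y$ by the symmetric form of \eqref{eq:rho_density}. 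Dominated convergence then yields \eqref{eq:29}.

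Joint continuity on $(0,\infty)\times\Gamma\times\Gamma$ is already available from continuity of $p_t^\Gamma$ and the continuity and strict positivity of $M_\Gamma$ on $\Gamma$. To extend across $x=0$ with $y_0\in\Gamma$, I will use the scaling \eqref{eq:scaling1} to reduce to $t=1$, so that the uniform-in-$y$ version of Theorem~\ref{thm:rho_conv} applies; combining this with the continuity of $\vphi$ on $\Gamma$ provided by Lemma~\ref{lem:vphi_redef} and the triangle inequality
\[
|\rho_t(x,y)-\rho_{t_0}(0,y_0)|\leq |\rho_t(x,y)-\rho_t(0,y)|+|\rho_t(0,y)-\rho_{t_0}(0,y_0)|,
\]
one obtains continuity at $(t_0,0,y_0)$; symmetry gives the same at $(t_0,x_0,0)$. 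For the corner $(t_0,0,0)$, I apply \eqref{eq:29} with $s=t/2$ to write $\rho_t(0,z)=\int_\Gamma \rho_{t/2}(0,y)\rho_{t/2}(y,z)M_\Gamma^2(y)\ud y$ and let $\Gamma\ni z\to 0$; since $\rho_{t/2}(y,z)=\rho_{t/2}(z,y)\to\rho_{t/2}(0,y)$ pointwise and the same dominating bound applies, dominated convergence yields the finite limit
\[
\rho_t(0,0):=\int_\Gamma \rho_{t/2}(0,y)^{2}\, M_\Gamma^2(y)\ud y,
\]
finite because $\rho_{t/2}(0,\cdot)\in L^2(M_\Gamma^2\ud y)$ by the bound on $\vphi$ in Lemma~\ref{lem:vphi_redef}. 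Joint continuity at $(t_0,0,0)$ then follows by iterating the same uniformity argument.

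\textbf{Main obstacle.} The crux is that two distinct uniformities must cooperate: the uniform-in-$y$ convergence from Theorem~\ref{thm:rho_conv} (which lets the limit in $x$ pass through the integral), and a uniform-in-$x$ upper bound on $\rho_t(x,y)$ for $x$ near $0$ (supplying the dominating function). Once these are secured by the scaled version of \eqref{eq:rho_bound2}, the remaining steps are routine applications of dominated convergence and the triangle inequality.
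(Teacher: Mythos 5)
Your proposal is correct and follows essentially the same route as the paper: the limit and scaling come from Theorem~\ref{thm:rho_conv}, identity \eqref{eq:29} is obtained by passing to the limit in the Chapman--Kolmogorov identity via \eqref{eq:rho_bound2}, \eqref{eq:rho_density} and dominated convergence, and the extension to the vertex rests on the bilinear representation $\rho_t(x,y)=\int_\Gamma\rho_{t/2}(x,w)\rho_{t/2}(y,w)M_\Gamma^2(w)\ud w$. The only (cosmetic) difference is that the paper treats all boundary points $(x_0,y_0)\in(\Gamma\cup\{0\})^2$ at once through this single representation, whereas you handle the edge points by the triangle inequality and the corner $(0,0)$ separately.
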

	\begin{proof}
		The existence of the limit \eqref{eq:27} and the scaling property \eqref{eq:28} are proved in Theorem \ref{thm:rho_conv}, see also Lemma \ref{lem:vphi_redef}. For the proof of \eqref{eq:29} we employ \eqref{eq:rho_ChK} to write
		\begin{align*}
			\rho_{t+s}(0,z) = \lim_{\Gamma \ni y \to 0} \rho_{t+s}(y,z) = \lim_{\Gamma \ni y \to 0} \int_{\Gamma} \rho_t(y,w) \rho_s(w,z) M_{\Gamma}^2(w)\ud w,
		\end{align*}
		and use \eqref{eq:rho_density}, \eqref{eq:scaling1}, \eqref{eq:rho_bound2}, and the dominated convergence theorem. 
		
		Thus, it remains to prove the continuity of $\rho$ on $(0,\infty) \times (\Gamma \cup \{0\}) \times (\Gamma \cup \{0\})$. By symmetry and the Chapman-Kolmogorov property \eqref{eq:rho_ChK} of $\rho_1$,
		\begin{equation}\label{eq:30}
		\rho_1(x,y) = \int_{\Gamma} \rho_{1/2}(x,z)\rho_{1/2}(y,z)M_{\Gamma}^2(z)\ud z, \quad x,y \in \Gamma.
		\end{equation}
		The continuity of $\rho_1$ on $\Gamma \times \Gamma$ together with Theorem \ref{thm:rho_conv}, for every $x_0,y_0 \in \Gamma \cup \{0\}$ we have $\rho_{1/2}(x,z) \to \rho_{1/2}(x_0,z)$ and $\rho_{1/2}(y,z) \to \rho_{1/2}(y_0,z)$ as $x \to x_0$ and $y \to y_0$. Moreover, \eqref{eq:rho_bound} entails that
		\[
		\rho_{1/2}(x,z)\rho_{1/2}(y,z)M_{\Gamma}^2(z) \leq c  (1+|z|)^{-2d-2\alpha},
		\]
		with the constant $c$ possibly dependent on $x_0$ and $y_0$. It follows by the dominated convergence theorem that
		\[
		\rho_1(x,y) \to \int_{\Gamma} \rho_{1/2}(x_0,z)\rho_{1/2}(y_0,z)M_{\Gamma}^2(z)\ud z,
		\]
		as $x \to x_0$ and $y \to y_0$ and in view of \eqref{eq:30}, it is an extension of $\rho_1$ to $(\Gamma \cup \{0\}) \times (\Gamma \cup \{0\})$, which will be denoted by the same symbol. It follows now from \eqref{eq:scaling1} that
		\[
		t^{-(d+2\beta)/\alpha} \rho_1 \big( t^{-1/\alpha}x,t^{-1/\alpha}y \big), \quad x,y \in \Gamma \cup \{0\},\, t>0,
		\]
		is a finite continuous extension of $\rho_t$ for every $t>0$. It remains to observe that the extension is unique and jointly continuous in $(t,x,y) \in \R_+ \times (\Gamma \cup \{0\}) \times (\Gamma \cup \{0\}$.
	\end{proof}

	\begin{corollary}
		We have
		$
	\rho_1(0,0) = \lim_{\Gamma \ni x,y \to 0}\rho_1(x,y) \in (0,\infty).
		$
	\end{corollary}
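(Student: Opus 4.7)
The plan is mostly to unpack what is already contained in Theorem~\ref{thm:rho_ex}. That theorem establishes that $\rho$ extends continuously to $(0,\infty)\times(\Gamma\cup\{0\})\times(\Gamma\cup\{0\})$, so the limit $\lim_{\Gamma\ni x,y\to 0}\rho_1(x,y)$ exists and is finite. Thus the only genuine content of the corollary is strict positivity of this limit.

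For positivity, I would start from the Chapman--Kolmogorov identity \eqref{eq:30}, namely
\[
\rho_1(x,y)=\int_\Gamma \rho_{1/2}(x,z)\rho_{1/2}(y,z)M_\Gamma^2(z)\ud z,
\]
and pass to the limit $\Gamma\ni x,y\to 0$. The dominated convergence step is exactly the one carried out in the proof of Theorem~\ref{thm:rho_ex}: the integrand converges pointwise on $\Gamma$ by the continuity assertion from Theorem~\ref{thm:rho_conv}, and the dominating bound $\rho_{1/2}(x,z)\rho_{1/2}(y,z)M_\Gamma^2(z)\lesssim (1+|z|)^{-2d-2\alpha}$ (valid for $x,y$ in a fixed bounded set, via \eqref{eq:rho_bound}) is integrable. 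This yields
\[
\rho_1(0,0)=\int_\Gamma \rho_{1/2}(0,z)^2\,M_\Gamma^2(z)\ud z.
\]

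It remains to check that this integral is strictly positive. By Theorem~\ref{thm:rho_conv} together with the scaling \eqref{eq:scaling1}, $\rho_{1/2}(0,z)=2^{(d+2\beta)/\alpha}\vphi(2^{1/\alpha}z)$. By Lemma~\ref{lem:vphi_redef}, $\vphi(y)\approx(1+|y|)^{-d-\alpha}\P_y(\tau_\Gamma>1)/M_\Gamma(y)$, and each of the three factors on the right is strictly positive at every $y\in\Gamma$ (the survival probability because $\Gamma$ is open, and $M_\Gamma$ because of \eqref{eq:M_hom} together with $M_\Gamma(\one)=1$ and the Harnack principle implicit in \cite{BB04}). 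Hence $\vphi>0$ on $\Gamma$, the integrand above is strictly positive on $\Gamma$, and so $\rho_1(0,0)>0$.

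There is no real obstacle here; everything needed has already been assembled in Theorem~\ref{thm:rho_conv}, Lemma~\ref{lem:vphi_redef} and Theorem~\ref{thm:rho_ex}. If any step deserves slight care, it is just the observation that positivity of $\vphi$ on all of $\Gamma$ follows from the pointwise comparison in Lemma~\ref{lem:vphi_redef} rather than from the mere fact that $\vphi$ is a density.
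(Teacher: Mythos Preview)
Your argument is correct, but it differs from the paper's proof in how positivity is obtained. The paper simply observes that, by Theorem~\ref{thm:rho_ex} and Theorem~\ref{thm:rho_conv}, $\rho_1(0,0)=\lim_{\Gamma\ni y\to 0}\vphi(y)=:\vphi(0)$, and then invokes Lemma~\ref{lem:vphi_redef} together with \cite[Lemma~4.2]{BB04}, which gives the two-sided estimate $\P_y(\tau_\Gamma>1)\approx M_\Gamma(y)$ for $y$ near $0$, to conclude $\vphi(0)\in(0,\infty)$. Your route instead passes through the integral representation $\rho_1(0,0)=\int_\Gamma \rho_{1/2}(0,z)^2 M_\Gamma^2(z)\ud z$ and uses only \emph{interior} positivity of $\vphi$; this is arguably more elementary, since it avoids the boundary analysis of $\P_y(\tau_\Gamma>1)/M_\Gamma(y)$ as $y\to 0$. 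In fact you could simplify further: positivity of the integral follows already from $\int_\Gamma \vphi\,M_\Gamma^2=1$, so the detour through the Harnack principle and positivity of $M_\Gamma$ on all of $\Gamma$ is unnecessary.
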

	\begin{proof}
By Theorem \ref{thm:rho_ex}, $\rho_1(0,0) = \lim_{\Gamma \ni y \to 0}\vphi(y)=: \vphi(0)$. Thus, the claim follows by Lemma \ref{lem:vphi_redef} and \cite[Lemma 4.2]{BB04}. 
	\end{proof}

	\begin{proof}[Proof of Theorem \ref{thm:main1}]
By \eqref{eq:27}, $\Psi_t(x) = \rho_t(0,x)M_{\Gamma}(x)$, $t>0$, $x \in \Gamma$. Thus, the existence of $\Psi_t$ is just a reformulation of \eqref{eq:27}. The scaling property \eqref{eq:18} follows immediately from \eqref{eq:28} and the homogeneity of the Martin kernel \eqref{eq:M_hom}, and \eqref{eq:psi_evol} is equivalent to \eqref{eq:29}. 
	\end{proof}
We conclude this part by rephrasing \eqref{eq:vphi_density} in terms of $\Psi_t$:
	\begin{equation}\label{e.nM}
		\int_{\Gamma}\Psi_t(x)M_{\Gamma}(x)\ud x=1, \quad t>0.
	\end{equation}
\subsection{Yaglom limit}\label{subsec:Y}
The above results quickly lead to calculation of the
Yaglom limit
for the stable process (conditioned to stay in a cone). 
Note that our proof is different from that in \cite{BoPaWa2018}. We also cover more general cones, e.g.,  $\R\setminus\{0\}$ and $\R^2 \setminus ([0,\infty) \times \{0\})$.
First, we obtain the following extension of \cite[Theorem 3.1]{BoPaWa2018}.
	\begin{corollary}\label{cor:survM_as}
		Let $\Gamma$ be a fat cone. For every $t>0$,
		\[
		\lim_{\Gamma \ni x \to 0} \frac{\P_x(\tau_{\Gamma}>t)}{M_{\Gamma}(x)} = C_1t^{-\beta/\alpha}\quad 	\mbox{where} \quad C_1 = \int_{\Gamma} \vphi(z)M_{\Gamma}(z)\ud z \in (0,\infty).
		\]
		\end{corollary}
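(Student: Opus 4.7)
The plan is to reduce the statement to the density-level convergence already established for $\rho_t$ and then dispatch the general $t$ by scaling. First I would rewrite the quantity of interest in terms of $\rho_t$. From \eqref{eq:rho_def} and \eqref{eq:surv_def},
\[
\frac{\P_x(\tau_\Gamma > t)}{M_\Gamma(x)} = \int_\Gamma \rho_t(x,y)\, M_\Gamma(y)\ud y, \quad x \in \Gamma,\ t>0.
\]
By Theorem~\ref{thm:rho_conv} (or Theorem~\ref{thm:rho_ex}) the integrand converges pointwise, as $\Gamma \ni x \to 0$, to $\rho_t(0,y) M_\Gamma(y)$. The whole proof then amounts to legitimizing interchange of limit and integral and then reading off the constant.

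For $t=1$ I would apply dominated convergence using the upper bound \eqref{eq:rho_bound}: taking $R=1$, for any $x\in \Gamma_1$ and $y\in \Gamma$,
\[
\rho_1(x,y)\, M_\Gamma(y) \lesssim (1+|y|)^{-d-\alpha}\,\frac{\P_y(\tau_\Gamma>1)}{M_\Gamma(y)}\, M_\Gamma(y) \leq (1+|y|)^{-d-\alpha},
\]
with implicit constant depending only on $\alpha$ and $\Gamma$, since $\P_y(\tau_\Gamma>1)\leq 1$. As $(1+|y|)^{-d-\alpha}$ is integrable on $\R^d$, dominated convergence yields
\[
\lim_{\Gamma \ni x \to 0} \frac{\P_x(\tau_\Gamma>1)}{M_\Gamma(x)} = \int_\Gamma \rho_1(0,y)\, M_\Gamma(y)\ud y = \int_\Gamma \vphi(y)\, M_\Gamma(y)\ud y =: C_1,
\]
where the last identity uses Theorem~\ref{thm:rho_conv} with $t=1$ together with the identification $\rho_1(0,y)=\vphi(y)$ from Lemma~\ref{lem:vphi_redef}.

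The general $t$ is then immediate from the scaling relations \eqref{eq:surv_scaling} and \eqref{eq:M_hom}: writing $\P_x(\tau_\Gamma>t) = \P_{t^{-1/\alpha}x}(\tau_\Gamma>1)$ and $M_\Gamma(t^{-1/\alpha}x) = t^{-\beta/\alpha}M_\Gamma(x)$, one gets
\[
\frac{\P_x(\tau_\Gamma>t)}{M_\Gamma(x)} = t^{-\beta/\alpha}\,\frac{\P_{t^{-1/\alpha}x}(\tau_\Gamma>1)}{M_\Gamma(t^{-1/\alpha}x)} \xrightarrow[\Gamma \ni x \to 0]{} t^{-\beta/\alpha}C_1.
\]
Finally, $C_1 \in (0,\infty)$: finiteness is a direct consequence of the dominated convergence step (or, alternatively, of the uniform bound \eqref{M_P_comp} applied to $x$ near $0$), and positivity follows because Lemma~\ref{lem:vphi_redef} gives $\vphi>0$ pointwise on $\Gamma$ and $M_\Gamma>0$ on $\Gamma$, a set of positive Lebesgue measure. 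The only mildly delicate point is to ensure a uniform-in-$x$ integrable majorant near the vertex, which is precisely what restricting \eqref{eq:rho_bound} to a bounded $\Gamma_R$ provides; no finer information on the elusive ratio $\P_y(\tau_\Gamma>1)/M_\Gamma(y)$ is needed since it disappears after multiplication by $M_\Gamma(y)$ and trivial bounding by $1$.
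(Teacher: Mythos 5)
Your proposal is correct and follows essentially the same route as the paper: rewrite the ratio as $\int_\Gamma \rho_t(x,y)M_\Gamma(y)\ud y$, dominate via \eqref{eq:rho_bound} (the majorant collapsing to $(1+|y|)^{-d-\alpha}$ exactly as you note), apply Theorem~\ref{thm:rho_conv} with dominated convergence, and reduce general $t$ to $t=1$ by the scalings \eqref{eq:surv_scaling} and \eqref{eq:M_hom}. The paper states these steps more tersely; your write-up just fills in the details it leaves implicit.
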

	
	\begin{proof}
		It is enough to prove the claim for $t=1$; the general case follows by the scalings \eqref{eq:surv_scaling} and \eqref{eq:M_hom}. We have
		\[
		\frac{\P_x(\tau_{\Gamma}>1)}{M_{\Gamma}(x)} = \int_{\Gamma} \frac{p_1^{\Gamma}(x,y)}{M_{\Gamma}(x)}\ud y = \int_{\Gamma} \rho_1(x,y) M_{\Gamma}(y)\ud y, \quad x \in \Gamma.
		\]
We use \eqref{eq:rho_bound}, the dominated convergence theorem, and Theorem \ref{thm:rho_conv} to get the conclusion.
	\end{proof}

The first identity below is the 
Yaglom limit. 
	\begin{theorem}\label{thm:YL}
		Assume $\Gamma$ is a fat cone and $B$ is a bounded subset of $\Gamma$. Then, 
		\[\lim_{t \to \infty} \P_x \left( t^{-1/\alpha}X_t \in A\; \middle|\; \tau_{\Gamma}>t \right) = \mu(A), \quad A \subset \Gamma,
		\]
		uniformly in $x\in B$,
		where
		\[
		\mu(A) 
		:= \frac{1}{C_1}\int_A  \vphi(y)M_{\Gamma}(y)\ud y, \quad A \subset \Gamma.
		\]
		%
	\end{theorem}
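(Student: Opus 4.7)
The plan is to rewrite the conditional probability as a ratio of integrals of the Doob-conditioned kernel $\rho_1$ against $M_\Gamma(z)\ud z$, and then pass to the limit using Theorem \ref{thm:rho_conv} together with the integrable majorant from \eqref{eq:rho_bound}.

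First I would use the scaling $p_t^\Gamma(x,y) = t^{-d/\alpha} p_1^\Gamma(t^{-1/\alpha}x,\, t^{-1/\alpha}y)$ and the substitution $y = t^{1/\alpha} z$ to write
\[
\P_x\bigl(t^{-1/\alpha} X_t \in A,\; \tau_\Gamma > t\bigr) = \int_A p_1^\Gamma\bigl(t^{-1/\alpha} x,\, z\bigr)\ud z,
\]
and $\P_x(\tau_\Gamma > t) = \int_\Gamma p_1^\Gamma(t^{-1/\alpha} x, z)\ud z$. Factoring $p_1^\Gamma(w,z) = M_\Gamma(w) M_\Gamma(z) \rho_1(w,z)$ via \eqref{eq:rho_def} and cancelling the common factor $M_\Gamma(t^{-1/\alpha} x)$, I obtain the key identity
\[
\P_x\bigl(t^{-1/\alpha} X_t \in A \bigm| \tau_\Gamma > t\bigr) = \frac{\int_A M_\Gamma(z)\rho_1(t^{-1/\alpha} x, z)\ud z}{\int_\Gamma M_\Gamma(z)\rho_1(t^{-1/\alpha} x, z)\ud z}.
\]

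The main step is then to show that, for every measurable $A \subseteq \Gamma$,
\[
\int_A M_\Gamma(z) \rho_1(t^{-1/\alpha} x, z)\ud z \longrightarrow \int_A M_\Gamma(z) \vphi(z)\ud z \quad \text{as } t \to \infty,
\]
uniformly in $x \in B$. Since $B$ is bounded, $t^{-1/\alpha} x \in \Gamma_1$ for all $x \in B$ once $t$ is large. By \eqref{eq:rho_bound} and Lemma \ref{lem:vphi_redef} both $M_\Gamma(z)\rho_1(t^{-1/\alpha} x, z)$ and $M_\Gamma(z)\vphi(z)$ are bounded by $c(1+|z|)^{-d-\alpha}$, which is integrable on $\Rd$. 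Given $\varepsilon > 0$, I would first fix $R$ so large that the tail $\int_{\Gamma \setminus \Gamma_R} M_\Gamma(z)[\rho_1(t^{-1/\alpha} x, z) + \vphi(z)]\ud z < \varepsilon$ uniformly in $x \in B$. On the bounded part $\Gamma_R$, the uniform-in-$y$ convergence from Theorem \ref{thm:rho_conv} gives
\[
\int_{A \cap \Gamma_R} M_\Gamma(z)\bigl|\rho_1(t^{-1/\alpha} x, z) - \vphi(z)\bigr|\ud z \le \Bigl(\sup_{y \in \Gamma}\bigl|\rho_1(t^{-1/\alpha} x, y) - \vphi(y)\bigr|\Bigr) \int_{\Gamma_R} M_\Gamma(z)\ud z,
\]
which tends to $0$ uniformly in $x \in B$ because $t^{-1/\alpha} x \to 0$ uniformly on $B$ and $M_\Gamma$ is locally integrable.

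Applying the above with $A = \Gamma$ identifies the denominator's limit as $C_1 = \int_\Gamma \vphi(z) M_\Gamma(z)\ud z \in (0,\infty)$, consistently with Corollary \ref{cor:survM_as}; meanwhile the numerator converges uniformly in $x \in B$ to $\int_A \vphi(z) M_\Gamma(z)\ud z$. Dividing yields $\mu(A)$ uniformly in $x \in B$, which is the claim. The one genuine technical obstacle is the uniformity in $x$, and it is precisely the split into a bounded core (handled by the uniform-in-$y$ statement of Theorem \ref{thm:rho_conv} plus local integrability of $M_\Gamma$) and a tail (handled by the integrable majorant from \eqref{eq:rho_bound}) that delivers it.
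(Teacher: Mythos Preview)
Your proof is correct and follows essentially the same route as the paper: both use scaling to reduce to time $1$ at the spatial point $t^{-1/\alpha}x$, rewrite the conditional probability via $\rho_1$, and pass to the limit using Theorem~\ref{thm:rho_conv} together with the integrable majorant from \eqref{eq:rho_bound}. The only cosmetic difference is that the paper separates off the factor $M_\Gamma(t^{-1/\alpha}x)/\P_{t^{-1/\alpha}x}(\tau_\Gamma>1)$ and invokes Corollary~\ref{cor:survM_as} directly for it, whereas you treat the denominator as the $A=\Gamma$ case of the same integral convergence; your explicit tail/core split to secure the uniformity in $x\in B$ is a welcome elaboration of what the paper leaves implicit in its appeal to dominated convergence.
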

	\begin{proof}
		By \eqref{eq:surv_def} and the scaling property \eqref{eq:surv_scaling},
		\begin{align*}
			\P_x \left( t^{-1/\alpha}X_t \in A \;\middle|\; \tau_{\Gamma}>t \right) &= \frac{\P_x \left( \tau_{\Gamma}>t,t^{-1/\alpha}X_t \in A \right)}{\P_x(\tau_{\Gamma}>t)} \\ &= \frac{\P_{t^{-1/\alpha}x} \left( \tau_{\Gamma}>1,X_1 \in A \right)}{\P_{t^{-1/\alpha}x}(\tau_{\Gamma}>1)} \\ &= \int_A \frac{p_1^{\Gamma}\big(t^{-1/\alpha}x,y\big)}{M_{\Gamma}\big(t^{-1/\alpha}x\big)}\ud y \cdot \frac{M_{\Gamma}(t^{-1/\alpha}x)}{\P_{t^{-1/\alpha}x}(\tau_{\Gamma}>1)}.
		\end{align*}
		The claim follows by Corollary \ref{cor:survM_as}, \eqref{eq:rho_bound}, and the dominated convergence theorem.
	\end{proof}
	
	\begin{theorem}\label{thm:UYL}
		If $\Gamma$ is a fat cone and $\gamma$ is a probability measure on $\Gamma$ with $\int_{\Gamma}(1+|y|)^{\alpha}\,\gamma({\rm d}y)<\infty$, then
		\[
		\lim_{t \to \infty} \P_\gamma \left( t^{-1/\alpha}X_t \in A\; \middle|\; \tau_{\Gamma}>t \right) = \mu(A), \quad A \subset \Gamma.
		\]
	\end{theorem}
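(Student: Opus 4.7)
The strategy is to reduce to the fixed starting point case (Theorem \ref{thm:YL}) by Fubini and a dominated-convergence argument in the outer averaging over $\gamma$; the $\alpha$-moment hypothesis will be used precisely to produce a $\gamma$-integrable dominant. Writing
\[
\P_\gamma\bigl(t^{-1/\alpha}X_t\in A \mid \tau_\Gamma>t\bigr)=\frac{\int_\Gamma \P_x(t^{-1/\alpha}X_t\in A,\,\tau_\Gamma>t)\,\gamma(\od x)}{\int_\Gamma\P_x(\tau_\Gamma>t)\,\gamma(\od x)},
\]
and using the scaling of $p_t^\Gamma$ together with the homogeneity $M_\Gamma(t^{-1/\alpha}x)=t^{-\beta/\alpha}M_\Gamma(x)$ from \eqref{eq:M_hom}, I rewrite the numerator as
\[
t^{-\beta/\alpha}\int_\Gamma M_\Gamma(x)\,g_A(t,x)\,\gamma(\od x),\qquad g_B(t,x):=\int_B\rho_1(t^{-1/\alpha}x,y)M_\Gamma(y)\ud y,
\]
and the denominator analogously with $B=\Gamma$. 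The common prefactor $t^{-\beta/\alpha}$ cancels.

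For each fixed $x\in\Gamma$, $t^{-1/\alpha}x\to 0$ as $t\to\infty$, so by Theorem \ref{thm:rho_conv} $\rho_1(t^{-1/\alpha}x,y)\to\vphi(y)$ pointwise in $y$. Combined with the $y$-integrable majorant $\rho_1(t^{-1/\alpha}x,y)M_\Gamma(y)\lesssim (1+|y|)^{-d-\alpha}$ coming from \eqref{eq:rho_bound} and \eqref{M_P_comp} applied at $y$ (valid for $t^{-1/\alpha}x$ in any fixed bounded subset of $\Gamma$, in particular for $t$ large enough depending on $x$), the dominated convergence theorem in $y$ yields
\[
g_A(t,x)\to\int_A\vphi(y)M_\Gamma(y)\ud y=C_1\mu(A),\qquad g_\Gamma(t,x)\to C_1,
\]
the latter being just Corollary \ref{cor:survM_as}. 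This reproduces Theorem \ref{thm:YL} at a single starting point.

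The core step, and the only nontrivial point, is to pass to the limit under the outer $\gamma$-integral. By \eqref{M_P_comp},
\[
g_\Gamma(t,x)=\frac{\P_{t^{-1/\alpha}x}(\tau_\Gamma>1)}{M_\Gamma(t^{-1/\alpha}x)}\leq c\bigl(1+|t^{-1/\alpha}x|^{\alpha-\beta}\bigr)\leq c\bigl(1+|x|^{\alpha-\beta}\bigr)\quad\text{for all }t\ge 1,
\]
while \eqref{eq:M_hom} together with the boundedness of $M_\Gamma$ on $\Sd\cap\Gamma$ gives $M_\Gamma(x)\le c(1+|x|)^\beta$. Hence
\[
M_\Gamma(x)\,g_A(t,x)\;\le\;M_\Gamma(x)\,g_\Gamma(t,x)\;\le\;c\,(1+|x|)^\alpha,
\]
which is $\gamma$-integrable exactly by the standing assumption $\int_\Gamma(1+|y|)^\alpha\gamma(\od y)<\infty$. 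Dominated convergence now sends numerator and denominator to $C_1\mu(A)\int_\Gamma M_\Gamma\,\od\gamma$ and $C_1\int_\Gamma M_\Gamma\,\od\gamma$ respectively; the common factor $\int_\Gamma M_\Gamma(x)\gamma(\od x)\in(0,\infty)$ cancels and the ratio equals $\mu(A)$. The main obstacle is precisely this uniform-in-$t$ domination: for $|x|$ large the rescaled point $t^{-1/\alpha}x$ need not be near $0$ for moderate $t$, so the pointwise convergence alone is insufficient and one must control $g_\Gamma(t,x)$ away from the $t\to\infty$ regime — which \eqref{M_P_comp} delivers at the cost of the factor $(1+|x|)^{\alpha-\beta}$, accounting exactly for the $\alpha$-moment hypothesis on $\gamma$.
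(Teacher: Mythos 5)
Your proof is correct. It reaches the same conclusion via a somewhat more direct route than the paper. The paper decomposes $\P_\gamma(\,\cdot\mid\tau_\Gamma>t)$ as $\int_\Gamma \P_x(\,\cdot\mid\tau_\Gamma>t)\,f_t(x)\,\gamma(\od x)$ with $f_t(x)=\P_x(\tau_\Gamma>t)/\P_\gamma(\tau_\Gamma>t)$, proves the pointwise convergence $f_t\to M_\Gamma/\int M_\Gamma\,\od\gamma$ (using the same bound from Lemma \ref{lem:sur_M_comp} that you use), observes that $\int f_t\,\od\gamma=1=\int f\,\od\gamma$, and then invokes Vitali's convergence theorem via uniform integrability of $\{f_t\}$ to pass to the limit. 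You instead keep numerator and denominator separate, factor out $t^{-\beta/\alpha}M_\Gamma(x)$ via the scaling and \eqref{eq:M_hom}, and dominate the entire integrand by the explicit $\gamma$-integrable majorant $c(1+|x|)^\alpha$, obtained by combining \eqref{M_P_comp} at $t^{-1/\alpha}x$ (monotone in $t\ge 1$ since $\alpha>\beta$) with $M_\Gamma(x)\lesssim(1+|x|)^\beta$; plain dominated convergence then finishes both integrals. The essential analytic input is identical in both arguments — the survival-probability upper bound of Lemma \ref{lem:sur_M_comp} is exactly what converts the $\alpha$-moment hypothesis into integrability — but your version avoids the uniform-integrability/Vitali machinery at the cost of redoing the single-point limit (Corollary \ref{cor:survM_as} and Theorem \ref{thm:YL}) inline; this is harmless since your inner dominated-convergence step, using Theorem \ref{thm:rho_conv} and the majorant $(1+|y|)^{-d-\alpha}$ from \eqref{eq:rho_bound}, is exactly how the paper proves those statements. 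All steps check out, including the positivity and finiteness of $\int_\Gamma M_\Gamma\,\od\gamma$ needed for the final cancellation.
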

	\begin{proof}
		Let $t\ge 1$.	In view of \cite[Lemma 8.7]{Kallenberg02}, we may write
		\begin{align*}
			\P_\gamma \left( t^{-1/\alpha}X_t \in A\; \middle|\; \tau_{\Gamma}>t \right) &= \frac{\P_\gamma \left( t^{-1/\alpha}X_t \in A, \tau_{\Gamma}>t \right)}{\P_\gamma \left(  \tau_{\Gamma}>t \right)} \\ 
			&= \int_{\Gamma} \P_x \left( t^{-1/\alpha}X_t \in A\; \middle|\; \tau_{\Gamma}>t \right)
			\frac{\P_x \left(  \tau_{\Gamma}>t \right)}{\P_\gamma \left(  \tau_{\Gamma}>t \right)}\, \gamma(\od x)	.	
			\nonumber
		\end{align*}
		We first prove that for all $x \in \Gamma$,
		\begin{equation}\label{eq:11}
			\frac{\P_\gamma \left(  \tau_{\Gamma}>t \right)}{\P_x \left(  \tau_{\Gamma}>t \right)}:= \int_{\Gamma} \frac{\P_y \left(  \tau_{\Gamma}>t \right)}{\P_x \left(  \tau_{\Gamma}>t \right)}\, \gamma(\ud y) \to  \int_{\Gamma} \frac{M_{\Gamma}(y)}{M_{\Gamma}(x)}\, \gamma(\od y),
		\end{equation}
		as $t \to \infty$.
		Indeed, fix $x \in \Gamma$. First we note that by local boundedness of $M_{\Gamma}$ and \eqref{eq:M_hom},
		\[
		\int_{\Gamma} M_{\Gamma}(y) \,\gamma({\rm d}y) \leq c\int_{\Gamma}(1+|y|)^{\beta}\, \gamma({\rm d}y) <\infty,
		\]
		so the right-hand side of \eqref{eq:11} is finite. Next, by Corollary \ref{cor:survM_as}, \eqref{eq:surv_scaling}, and \eqref{eq:M_hom}, 
		\begin{align*}
			\lim_{t \to \infty }\frac{\P_y \left(  \tau_{\Gamma}>t \right)}{\P_x \left(  \tau_{\Gamma}>t \right)} =
			\lim_{t \to \infty }\frac{\P_{t^{-1/\alpha}y} \left(  \tau_{\Gamma}>1 \right)M_{\Gamma}(t^{-1/\alpha}x)}{\P_{t^{-1/\alpha}x} \left(  \tau_{\Gamma}>1 \right)M_{\Gamma}(t^{-1/\alpha}y)} \frac{M_{\Gamma}(y)}{M_{\Gamma}(x)} =		
			\frac{M_{\Gamma}(y)}{M_{\Gamma}(x)}, \quad x,y \in \Gamma. 
		\end{align*}
		Moreover, since $x$ is fixed, we may assume that $t \geq 1\vee |x|^{\alpha}$. Thus, by \cite[Lemma 4.2]{BB04}, Lemma \ref{lem:sur_M_comp}, the local boundedness of $M_{\Gamma}$ and \eqref{eq:M_hom},
		\begin{equation*}
			\frac{\P_y \left(  \tau_{\Gamma}>t \right)}{\P_x \left(  \tau_{\Gamma}>t \right)} \leq c \frac{\big(t^{-\beta/\alpha} + t^{-1}|y|^{\alpha-\beta}\big)M_{\Gamma}(y)}{t^{-\beta/\alpha}M_{\Gamma}(x)} \leq c \frac{\big(1+|y|^{\alpha-\beta}\big)M_{\Gamma}(y)}{M_{\Gamma}(x)} \leq c \frac{(1+|y|)^{\alpha}}{M_{\Gamma}(x)}.
		\end{equation*}
		Thus, the dominated convergence theorem yields \eqref{eq:11}, as desired.
	
	Next, we consider a family $\calF_1$ of functions $f_t$ of the form
	\[
	f_t(x) =  \frac{\P_x(\tau_{\Gamma}>t)}{\P_{\gamma}(\tau_{\Gamma}>t)}, \quad x \in \Gamma, \, t \geq 1.
	\]
	Denote
	\[
	f(x) = \frac{M_{\Gamma}(x)}{\int_{\Gamma}M_{\Gamma}(y)\,\gamma({\rm d}y)}, \quad x \in \Gamma.
	\]
	By virtue of \eqref{eq:11}, $f_t \to f$ everywhere in $\Gamma$ as $t \to \infty$. 
	Thus, $f_t \to f$ in measure $\gamma$ as $t \to \infty$, see \cite[Definition 22.2]{MR3644418}. Moreover, we have
	\[
	\int_{\Gamma} f(x)\,\gamma({\rm d}x) = 1 = \lim_{t \to \infty} 1 = \lim_{t \to \infty} \int_{\Gamma}f_t(x)\,\gamma({\rm d}x).
	\]
	Therefore, by Vitali's theorem \cite[Theorem 22.7]{MR3644418}, the family $\calF_1$ is uniformly integrable. If we now consider the family $\calF_2$ of functions $\tilde{f}_t$ of the form
	\[
	\tilde{f}_t(x) = \P_x \left( t^{-1/\alpha}X_t \in A\; \middle|\; \tau_{\Gamma}>t \right) f_t(x), \quad x \in \Gamma, \, t \geq 1,
	\]
	then a trivial bound $\P_x \left( t^{-1/\alpha}X_t \in A\; \middle|\; \tau_{\Gamma}>t \right) \leq 1$ shows that $\calF_2$ is uniformly integrable as well (see, e.g., \cite[Theorem 22.9]{MR3644418}). By Theorem \ref{thm:YL}, \eqref{eq:11} and \cite[Theorem 22.7]{MR3644418},
	\[
	\lim_{t \to \infty}\int_{\Gamma} \P_x \left( t^{-1/\alpha}X_t \in A\; \middle|\; \tau_{\Gamma}>t \right)
	\frac{\P_x \left(  \tau_{\Gamma}>t \right)}{\P_\gamma \left(  \tau_{\Gamma}>t \right)}\, \gamma(\od x) = \int_{\Gamma} \mu(A) \frac{M_{\Gamma}(x)}{\int_{\Gamma}M_{\Gamma}(y)\,\gamma({\rm d}y)} \,\gamma({\rm d}x) = \mu(A).
	\]
	The proof is complete.
	\end{proof}
	\begin{example}\label{rem:beta0}
Note that $\beta=0$ if and only if $\Gamma^c$ is a polar set and then $M_{\Gamma}(x)=1$ for all $x \in \Gamma$, see \cite[Theorem 3.2]{BB04}. Consequently, we have $p_t^{\Gamma}(x,y) = p_t(x,y)$ and $\P_x(\tau_{\Gamma}>t)=1$ for all $x,y \in \Gamma$ and all $t>0$. It follows that $\rho_t(x,y) = p_t(x,y)$ and a direct calculation using the Chapman-Kolmogorov property entails that $\vphi(y) = p_1(0,y)$ is the stationary density for the (classical) $\alpha$-stable Ornstein-Uhlenbeck semigroup, see \eqref{eq:l_def} and Theorem \ref{thm:stat_density}. The statement of Theorem \ref{thm:rho_conv} thus reduces to continuity of the heat kernel of the isotropic $\alpha$-stable L\'{e}vy process. Theorems \ref{thm:YL} and \ref{thm:UYL} trivialize in a similar way. Incidentally, in this case the moment condition on $\gamma$ in Theorem \ref{thm:UYL} is superfluous. Further examples are given in Section~\ref{s:Ab}.
	\end{example}

	\section{Asymptotic behavior for the killed semigroup}\label{s:Ab}
	This section is devoted to examples and applications in Functional Analysis and Partial Differential Equations. Note that in Lemmas \ref{lem:contraction} and \ref{lem:hyperc} we do not assume that $\Gamma$ is fat.
	\begin{lemma}\label{lem:contraction}
		$\{P_t^\Gamma \}_{t>0}$
		is a strongly continuous contraction semigroup on $L^1(M_\Gamma)$ and 
		\begin{equation}\label{e.cl}
			\int_{\Gamma} P_t^{\Gamma} f(x) M_{\Gamma}(x)\ud x=\int_{\Gamma} f(x)M_\Gamma (x)\ud x,\quad t>0,\quad f\in L^1(M_\Gamma).
		\end{equation}
	\end{lemma}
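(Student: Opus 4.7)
The plan is to derive everything from the mass identity \eqref{e.cl}, which I prove first. For $f\geq 0$, Tonelli together with the symmetry $p_t^\Gamma(x,y)=p_t^\Gamma(y,x)$ gives
\begin{equation*}
\int_\Gamma P_t^\Gamma f(x)M_\Gamma(x)\ud x=\int_\Gamma f(y)\int_\Gamma p_t^\Gamma(y,x)M_\Gamma(x)\ud x\,\ud y=\int_\Gamma f(y)P_t^\Gamma M_\Gamma(y)\ud y,
\end{equation*}
which equals $\int_\Gamma fM_\Gamma\ud y$ by Theorem~\ref{thm:M_invariance}; for signed $f\in L^1(M_\Gamma)$, splitting $f=f^+-f^-$ extends the identity by linearity. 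The contraction bound $\|P_t^\Gamma f\|_{L^1(M_\Gamma)}\leq \|f\|_{L^1(M_\Gamma)}$ is then immediate from $|P_t^\Gamma f|\leq P_t^\Gamma|f|$ combined with \eqref{e.cl} applied to $|f|$, while $P_{s+t}^\Gamma=P_s^\Gamma P_t^\Gamma$ is just Chapman--Kolmogorov \eqref{eq:9}.

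For strong continuity, by a standard $3\varepsilon$ argument using the contraction and the density of $C_c(\Gamma)$ in $L^1(\Gamma,M_\Gamma\ud x)$ (valid since $M_\Gamma$ is locally bounded), it suffices to prove $\|P_t^\Gamma f-f\|_{L^1(M_\Gamma)}\to 0$ as $t\to 0^+$ for $f\in C_c(\Gamma)$. Fix such an $f$ with $K:=\supp f\subset \Gamma$ and pick a compact $K'\subset\Gamma$ with $K\subset K'$. Right-continuity of paths of $\bfX$ together with $\P_x(\tau_\Gamma>0)=1$ for $x\in\Gamma$ give the pointwise limit $P_t^\Gamma f(x)\to f(x)$ on $\Gamma$ as $t\to 0^+$. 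On $K'$, where $M_\Gamma$ is bounded, dominated convergence with dominating function $2\|f\|_\infty \mathbf{1}_{K'}M_\Gamma$ handles the bulk:
\begin{equation*}
\int_{K'}|P_t^\Gamma f(x)-f(x)|M_\Gamma(x)\ud x\to 0.
\end{equation*}

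The delicate step is the tail $\int_{\Gamma\setminus K'}P_t^\Gamma|f|\,M_\Gamma\ud x$, and this is where \eqref{e.cl} does the heavy lifting. Using $|f|\leq \|f\|_\infty\mathbf{1}_K$ and applying \eqref{e.cl} to $\mathbf{1}_K$,
\begin{equation*}
\int_{\Gamma\setminus K'}P_t^\Gamma\mathbf{1}_K\,M_\Gamma\ud x=\int_K M_\Gamma\ud x-\int_{K'}P_t^\Gamma\mathbf{1}_K\,M_\Gamma\ud x,
\end{equation*}
and since $P_t^\Gamma\mathbf{1}_K\to \mathbf{1}_K$ pointwise almost everywhere on $\Gamma$ (the process stays near its start for small times) and is bounded by $1$, dominated convergence on $K'$ forces the right-hand side to tend to $0$ as $t\to 0^+$. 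Combining the bulk and tail estimates finishes the case $f\in C_c(\Gamma)$, and extending by density completes the proof.

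The main obstacle is precisely this far-field estimate: because $M_\Gamma$ grows like $|x|^\beta$ and is not integrable on $\Gamma$, no uniform $L^1(M_\Gamma)$-dominant for $P_t^\Gamma f$ is available, so naive dominated convergence cannot cover the region $\Gamma\setminus K'$. The trick is that \eqref{e.cl} converts the tail integral into the complementary bulk integral on $K'$, where $M_\Gamma$ is bounded and dominated convergence applies — turning a potentially delicate estimate into an algebraic consequence of mass conservation.
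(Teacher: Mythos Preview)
Your proof is correct and follows essentially the same route as the paper: mass identity via Tonelli, symmetry and Theorem~\ref{thm:M_invariance}; contractivity from $|P_t^\Gamma f|\le P_t^\Gamma|f|$; strong continuity by density of compactly supported functions and a bulk/tail split. The only visible differences are expository: you justify the tail $\int_{\Gamma\setminus K'}P_t^\Gamma|f|\,M_\Gamma$ explicitly by the mass-conservation trick (the paper simply asserts one can pick $R$ with the tail $<\varepsilon$, which amounts to the same thing), whereas the paper is more explicit than you on the pointwise convergence $P_t^\Gamma f(x)\to f(x)$, splitting it into the free-semigroup term $\int p_t(x,y)|f(y)-f(x)|\ud y$ and the exit-probability term $|f(x)|\,\P_x(\tau_\Gamma\le t)$ with a uniform bound $ct r^{-\alpha}$ on $K$.
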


	\begin{proof}
	 Let $f \geq 0$. By the Fubini-Tonelli theorem, the symmetry of $p_t^{\Gamma}$ and Theorem \ref{thm:M_invariance},
\begin{equation}\label{e.ipf}
	\int_{\Gamma} P_t^{\Gamma} f(x) M_{\Gamma}(x)\ud x = \int_{\Gamma}\int_{\Gamma} p_t^{\Gamma}(x,y) f(y) M_{\Gamma}(y)\ud y \ud x = \int_{\Gamma} f(y)M_{\Gamma}(y)\ud y.
	\end{equation}
Since $|P_t^\Gamma f|\le P_t^\Gamma|f|$, the contractivity follows. Furthermore, for arbitrary $f \in L^1(M_{\Gamma})$ we write $f= f_+ - f_-$ and use \eqref{e.ipf} to prove \eqref{e.cl}. The semigroup property follows from \eqref{eq:9}.

	To prove the strong continuity, we fix  $f \in L^1(M_{\Gamma})$ and let $G:=fM_{\Gamma} \in L^1(\Gamma)$. There is a sequence $g_n \in C_c^{\infty}(\Gamma)$ such that $\|g_n-G\|_{L^1(\Gamma)} \to 0$ as $n \to \infty$. For $f_n := g_n/M_{\Gamma}$ we get $f_n \in C_c^{\infty}(\Gamma)$ and $\|f_n-f \|_{L^1(M_{\Gamma})}=\|g_n-G\|_{L^1(\Gamma)} \to 0$. By the first part of the proof, 
	\begin{align*}
	\|P_t^{\Gamma}f-f\|_{L^1(M_{\Gamma})} &\leq \|P_t^{\Gamma}f-P_t^{\Gamma}f_n\|_{L^1(M_{\Gamma})}+\|P_t^{\Gamma}f_n-f_n\|_{L^1(M_{\Gamma})}+\|f_n-f\|_{L^1(M_{\Gamma})} \\ &\leq 2\|f_n-f\|_{L^1(M_{\Gamma})} + \|P_t^{\Gamma}f_n-f_n\|_{L^1(M_{\Gamma})}.
	\end{align*}
It remains to prove that $\|P_t^{\Gamma}f-f\|_{L^1(M_{\Gamma})} \to 0$ as $t \to 0^+$ for every $f \in C_c^{\infty}(\Gamma)$. To this end we let $\epsilon > 0$ and choose $R >0$ such that $\supp f \in B_R$ and
	$\int_{\Gamma \setminus \Gamma_R} P_t^{\Gamma}|f|(x)M_{\Gamma}(x)\ud x < \epsilon$.
	Then,
	\begin{equation}\label{e.on}
		\| P_t^{\Gamma}f-f\|_{L^1(M_{\Gamma})} < \int_{\Gamma_R} \big| P_t^{\Gamma}f(x)-f(x) \big| M_{\Gamma}(x)\ud x + \epsilon.
	\end{equation}
Considering the integrand in \eqref{e.on}, for all $x\in \Gamma_R$ we have
	\begin{equation}\label{eq:25}
	\big| P_t^{\Gamma}f(x)-f(x) \big| \leq \int_{\Gamma} p_t^{\Gamma}(x,y)|f(y)-f(x)|\ud y + |f(x)|\P_x(\tau_{\Gamma} \leq t).
	\end{equation}
	Since $P_t f \to f$ uniformly as $t\to 0^+$, for $t>0$ small enough we get 
	\begin{align*}
		\int_{\Gamma} p_t^{\Gamma}(x,y)|f(y)-f(x)|\ud y \leq \int_{\Rd} p_t(x,y)|f(y)-f(x)|\ud y < \epsilon.
	\end{align*}
On the other hand, 
	\[
	|f(x)|\P_x(\tau_{\Gamma} \leq t) \leq 
\| f\|_{\infty} \sup_{x \in K} \P_x(\tau_{\Gamma} \leq t),
	\]
where $K:=\supp f$. We have $r:= \dist(K,\Gamma^c)>0$, so
	\[
	\P_x(\tau_{\Gamma} \leq t) \leq \P_x(\tau_{B(x,r)} \leq t) = \P_0(\tau_{B_r} \leq t) \leq ctr^{-\alpha}<\epsilon,
	\]
	for $t$ small enough, see, e.g., \cite{Pruitt81}. 	By \eqref{e.on} and \eqref{eq:25}  we get, as required, 
	\[
		\| P_t^{\Gamma}f-f\|_{L^1(M_{\Gamma})} < \epsilon+(\epsilon+\|f\|_\infty\epsilon) |\Gamma_R|\sup_{\Gamma_R} M_{\Gamma}.
	\]
	\end{proof}
Recall that
\begin{equation*}
	\|f\|_{q,M_\Gamma }:=\|f/M_\Gamma\|_{L^q(M_\Gamma^2)}
	=\bigg( \int_{\Gamma} |f(x)|^q M_\Gamma^{2-q}(x)\ud x \bigg) ^{\frac{1}{q}}
	=\|f\|_{L^q(M_\Gamma^{2-q})},
\end{equation*}      
if $1 \leq q < \infty$, and
\begin{equation*}
	\|f\|_{\infty, M_\Gamma}:={\rm ess} \sup_{x\in \Gamma} |f(x)|/M_\Gamma(x).
\end{equation*}
	The following characterization of \textit{hypercontractivity} of $P^\Gamma_t$ is crucial for the proof of \eqref{e.sc}.
	\begin{lemma}\label{lem:hyperc}
		Let $q \in [1,\infty)$. We have 
		\begin{equation}\label{eq:w_norm}
			\| P_t^\Gamma f\|_{q,M_\Gamma} \le  Ct^{-\frac{d+2\beta}{\alpha}\frac{q-1}{q}} \|f\|_{1,M_\Gamma }
		\end{equation}
for all $t>0$ and all non-negative functions $f$ on $\Rd$  if and only if
		\begin{equation}\label{eq:20}
		\sup_{y\in \Gamma} \int_{\Gamma} \rho_1(x,y)^q M_{\Gamma}^2(x)\ud x <\infty.
	\end{equation}	
		
	\end{lemma}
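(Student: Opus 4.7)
The plan is to reformulate \eqref{eq:w_norm} as a bound for a symmetric positive integral operator on the weighted measure space $(\Gamma, \mathrm{d}\mu)$ with $\mathrm{d}\mu(x) := M_\Gamma^2(x)\,\mathrm{d}x$, and then apply a standard duality identity followed by a scaling calculation. Set $Q_t g(y) := \int_\Gamma \rho_t(x,y)\,g(x)\,\mathrm{d}\mu(x)$ for $g \geq 0$. From \eqref{eq:rho_def} one immediately obtains the intertwining $Q_t(f/M_\Gamma)(y) = P_t^\Gamma f(y)/M_\Gamma(y)$, while, by definition of the weighted norms, $\|h\|_{q,M_\Gamma} = \|h/M_\Gamma\|_{L^q(\mu)}$ and $\|h\|_{1,M_\Gamma} = \|h/M_\Gamma\|_{L^1(\mu)}$. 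Substituting $g := f/M_\Gamma$ (and reducing to $f \geq 0$ by the decomposition $f = f_+ - f_-$), the inequality \eqref{eq:w_norm} is therefore equivalent to the operator bound $\|Q_t\|_{L^1(\mu)\to L^q(\mu)} \leq C\, t^{-(q-1)(d+2\beta)/(q\alpha)}$ for every $t>0$.

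The second step is the standard duality identity: for any non-negative kernel $k$ on $(\Gamma,\mu)$, identifying $L^1(\mu)\to L^q(\mu)$ with $L^{q'}(\mu)\to L^\infty(\mu)$ and applying Hölder pointwise (with equality saturated by $g(y)\propto k(x,y)^{q-1}$) yields $\|Q\|_{L^1(\mu)\to L^q(\mu)} = \esssup_{x\in \Gamma}\|k(x,\cdot)\|_{L^q(\mu)}$. Applied to $k = \rho_t$ and combined with the symmetry $\rho_t(x,y)=\rho_t(y,x)$, this gives
\[
\|Q_t\|_{L^1(\mu)\to L^q(\mu)}^q = \esssup_{y\in \Gamma} \int_\Gamma \rho_t(x,y)^q M_\Gamma^2(x)\,\mathrm{d}x,
\]
so \eqref{eq:w_norm} at a single time $t$ is equivalent to finiteness of the right-hand side at the same time.

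The last step is to reduce to $t=1$ via the scaling \eqref{eq:scaling1}. Under the change of variable $x = t^{1/\alpha} u$, the homogeneity \eqref{eq:M_hom} yields $M_\Gamma^2(t^{1/\alpha} u)\,t^{d/\alpha}\,\mathrm{d}u = t^{(d+2\beta)/\alpha} M_\Gamma^2(u)\,\mathrm{d}u$, and a short computation produces
\[
\int_\Gamma \rho_t(x,y)^q M_\Gamma^2(x)\,\mathrm{d}x = t^{-(q-1)(d+2\beta)/\alpha} \int_\Gamma \rho_1\bigl(u, t^{-1/\alpha} y\bigr)^q M_\Gamma^2(u)\,\mathrm{d}u.
\]
Since $y \mapsto t^{-1/\alpha} y$ is a bijection of $\Gamma$, taking $\esssup$ over $y \in \Gamma$ on both sides and then the $q$-th root yields exactly the prefactor $t^{-(q-1)(d+2\beta)/(q\alpha)}$ in front of $\bigl(\sup_{y\in \Gamma}\int_\Gamma \rho_1(x,y)^q M_\Gamma^2(x)\,\mathrm{d}x\bigr)^{1/q}$, which establishes both implications simultaneously.

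I do not expect a serious obstacle: the argument is essentially soft. The only points demanding care are the correct bookkeeping of the weight $M_\Gamma^2$ in both the duality identity and the scaling change of variable, and the use of the symmetry of $\rho_t$ to interchange $\esssup_x$ and $\esssup_y$; the exponent arithmetic $-q(d+2\beta)/\alpha + (d+2\beta)/\alpha = -(q-1)(d+2\beta)/\alpha$ is what forces the precise polynomial rate in \eqref{eq:w_norm}.
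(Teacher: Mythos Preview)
Your proposal is correct and uses essentially the same ingredients as the paper: the Minkowski (equivalently, H\"older via duality) inequality for the upper bound, saturation by Dirac-type test functions for the lower bound, and the scaling \eqref{eq:scaling1} together with the homogeneity of $M_\Gamma$ to reduce to $t=1$. The only difference is packaging: you phrase the two inequalities as the single operator-norm identity $\|Q_t\|_{L^1(\mu)\to L^q(\mu)}=\esssup_{y}\|\rho_t(\cdot,y)\|_{L^q(\mu)}$, whereas the paper writes out the Minkowski step and the Dirac/Fatou step explicitly; since $\rho_1$ is continuous on $\Gamma\times\Gamma$, your $\esssup$ coincides with the paper's $\sup$.
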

	
\begin{proof}
	Assume \eqref{eq:20}. Let $f\ge 0$. With the notation $F:=f/M_{\Gamma}$ we get
	\[
	\| P_1^{\Gamma}f \|_{q,M_{\Gamma}} = \Bigg(\int_{\Gamma} \bigg( \int_{\Gamma} \rho_1(x,y) F(y)M_{\Gamma}^2(y)\ud y\bigg)^q M_{\Gamma}^2(x)\ud x\Bigg)^{1/q}.
	\]
Let $c$ be the supremum in \eqref{eq:20}.
By Minkowski integral inequality, 
	\begin{align*}
	\Bigg(\int_{\Gamma} \bigg( \int_{\Gamma} \rho_1(x,y) F(y)M_{\Gamma}^2(y)\ud y\bigg)^q M_{\Gamma}^2(x)\ud x \Bigg)^{1/q} &\leq \int_{\Gamma} \bigg(\int_{\Gamma} \rho_1(x,y)^qM_{\Gamma}^2(x) \ud x\bigg)^{1/q} F(y)M_{\Gamma}^2(y) \ud y \\ &\le c \int_{\Gamma} F(y)M_{\Gamma}^2(y) \ud y = c\|f\|_{1,M_{\Gamma}}.
	\end{align*}
For $t>0$, by scaling we get \eqref{eq:w_norm} as follows:
\begin{align*}
	\| P_t^{\Gamma} f \|_{q,M_{\Gamma}} &= t^{\frac{d+\beta(2-q)}{\alpha q}} \| P_1^{\Gamma} f(t^{1/\alpha}\,\cdot\,) \|_{q,M_{\Gamma}} \le c t^{\frac{d+\beta(2-q)}{\alpha q}}  \| f(t^{1/\alpha}\,\cdot\,) \|_{1,M_{\Gamma}} \\ &= ct^{\frac{d+\beta(2-q)}{\alpha q}}  t^{-\frac{d+\beta}{\alpha}}\| f \|_{1,M_{\Gamma}} = ct^{-\frac{d+2\beta}{\alpha}\frac{q-1}{q}} \| f\|_{1,M_{\Gamma}}.
\end{align*}
Conversely, assume \eqref{eq:w_norm}. Let $y \in \Gamma$. Let $g_n\ge 0$, $n\in \N$, be functions in $C^\infty_c(\Gamma)$ approximating $\delta_y$, the Dirac measure at $y$, as follows:
\[
\int_{\Gamma} g_n(x)\ud x = 1, \qquad \text{and} \qquad \lim_{n \to \infty} \int_{\Gamma} h(x)g_n(x)\ud x = h(y), 
\]
for every function $h$ continuous near $y$. For $f_n := g_n/M_{\Gamma}$, $\| f_n\|_{1,M_{\Gamma}} = \| g_n \|_1 = 1$ and
\[
P_1^{\Gamma} f_n(x) = \int_{\Gamma} p_1^{\Gamma}(x,z)\frac{g_n(z)}{M_{\Gamma}(z)} \ud z \to \frac{p_1^{\Gamma}(x,y)}{M_{\Gamma}(y)},
\]
as $n \to \infty$. By \eqref{eq:w_norm} and Fatou's lemma,
\begin{align*}
	C^q &\geq \liminf_{n \to \infty} \|P_1^{\Gamma} f_n\|_{q,M_{\Gamma}}^q \\ &=\liminf_{n \to \infty} \int_{\Gamma} \bigg| \int_{\Gamma} \frac{p_1^{\Gamma}(x,z)}{M_{\Gamma}(z)}g_n(z)\ud z \bigg|^q M_{\Gamma}^{2-q}(x)\ud x \\ &\geq \int_{\Gamma} \rho_1(x,y)^qM_{\Gamma}^2(x)\ud x.
\end{align*}
Since $y \in \Gamma$ was arbitrary, we obtain \eqref{eq:20}.
\end{proof}
\begin{remark}
Of course, \eqref{eq:w_norm} extends to arbitrary $f\in L^1(M_\Gamma)$.
	\end{remark}
	
\begin{example}\label{ex:beta0} As in Example \ref{rem:beta0}, we assume that $\beta=0$. In fact, to simplify notation, let $\Gamma=\Rd$. Then \eqref{eq:20} is trivially satisfied for every $q\in [1,\infty)$, because $\rho_1(x,y)=p_1(x,y)$ is bounded. Therefore, by \eqref{eq:w_norm}, for each $f\in L^1$,
	\begin{equation*}
			\| P_t f\|_{q} \le  Ct^{-\frac{d}{\alpha}\frac{q-1}{q}} \|f\|_{1}.
		\end{equation*}
This agrees with \cite{Vasquez18}, see also \cite{BJKP22}.		
	\end{example}
Here is a refinement of Lemma~\ref{lem:hyperc}.
	\begin{lemma}\label{lem_lim_0}
		 Let $q \in [1,\infty)$, assume \eqref{eq:20} and suppose $\Gamma$ is fat. If $f\in L^1(M_\Gamma)$, $\int_{\Gamma } f(x) M_\Gamma (x)\ud x  =0$ then 
		\begin{equation}\label{lim_0_cond}
			\lim_{t \rightarrow \infty }t^{\frac{d+2\beta}{\alpha}\frac{q-1}{q}}\|  P_t^{\Gamma} f\|_{q,M_\Gamma }=0.
		\end{equation}
		If, additionally, $f$ has compact support, then \eqref{lim_0_cond} is true for $q=\infty$, too.
	\end{lemma}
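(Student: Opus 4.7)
The plan is two-stage: first handle $f$ with compact support and $\int f M_\Gamma = 0$ by a direct scaling computation, then extend to general $f \in L^1(M_\Gamma)$ (for $q<\infty$) via approximation, using the hypercontractivity bound \eqref{eq:w_norm} to absorb the remainder.

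\emph{Stage 1 (compact support).} Let $f \in L^1(M_\Gamma)$ be supported in $\Gamma_{R_0}$ with zero integral. Using the cancellation,
\[
\frac{P_t^\Gamma f(x)}{M_\Gamma(x)} = \int_\Gamma [\rho_t(x,y) - \rho_t(x,0)]\, f(y)\, M_\Gamma(y)\,\ud y,
\]
with $\rho_t(x,0)$ from Theorem~\ref{thm:rho_ex}. Applying the scaling \eqref{eq:scaling2} and substituting $x = t^{1/\alpha} u$ (so $M_\Gamma^2(x)\,\ud x = t^{(d+2\beta)/\alpha} M_\Gamma^2(u)\,\ud u$), with $s := t^{-1/\alpha}$, yields
\[
t^{\frac{(d+2\beta)(q-1)}{\alpha q}} \|P_t^\Gamma f\|_{q, M_\Gamma} = \Bigl\| \int_\Gamma [\rho_1(\cdot, sy) - \rho_1(\cdot, 0)]\, f(y)\, M_\Gamma(y)\,\ud y \Bigr\|_{L^q(M_\Gamma^2)}
\]
for $q \in [1, \infty]$, so it suffices to show the right-hand side vanishes as $s \to 0$. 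For $q \in [1, \infty)$, Minkowski's integral inequality bounds it by $\int_{\Gamma_{R_0}} \|\rho_1(\cdot, sy) - \rho_1(\cdot, 0)\|_{L^q(M_\Gamma^2)}\, |f(y)|\, M_\Gamma(y)\,\ud y$. For $q=1$, Corollary~\ref{lem:1} (via symmetry of $\rho_1$ and $\rho_1(\cdot,0)=\vphi$) gives pointwise-in-$y$ convergence of the integrand to $0$. For $q \in (1,\infty)$, first apply \eqref{eq:rho_bound2} symmetrically (valid once $|sy|$ is bounded) to get $|\rho_1(u, sy) - \rho_1(u, 0)| \le 2c(1+|u|)^{-d-\beta} \le 2c$, then interpolate $\|g\|_{L^q(M_\Gamma^2)}^q \le \|g\|_\infty^{q-1}\|g\|_{L^1(M_\Gamma^2)}$ to reduce to the $q=1$ case. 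Dominated convergence in $y$, with uniform majorant $2\sup_z\|\rho_1(\cdot, z)\|_{L^q(M_\Gamma^2)}$ (finite by \eqref{eq:20}, or \eqref{eq:rho_density} for $q=1$), concludes. For $q = \infty$, exchange sup and integral and invoke Theorem~\ref{thm:rho_conv} to get $\sup_u |\rho_1(u, sy) - \rho_1(u, 0)| \to 0$ uniformly for $y \in \Gamma_{R_0}$ as $s \to 0$.

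\emph{Stage 2 (general $f$, $q < \infty$).} Fix $h \in C_c(\Gamma)$ with $\int h M_\Gamma = 1$, pick $g_n \in C_c(\Gamma)$ with $g_n \to f$ in $L^1(M_\Gamma)$, and set $f_n := g_n - \bigl(\int g_n M_\Gamma\bigr) h \in C_c(\Gamma)$, so $\int f_n M_\Gamma = 0$ and $f_n \to f$ in $L^1(M_\Gamma)$. The decomposition
\[
t^{\frac{(d+2\beta)(q-1)}{\alpha q}} \|P_t^\Gamma f\|_{q, M_\Gamma} \le t^{\frac{(d+2\beta)(q-1)}{\alpha q}} \|P_t^\Gamma(f - f_n)\|_{q, M_\Gamma} + t^{\frac{(d+2\beta)(q-1)}{\alpha q}} \|P_t^\Gamma f_n\|_{q, M_\Gamma},
\]
combined with \eqref{eq:w_norm} (extended to signed $f$ via $|P_t^\Gamma g| \le P_t^\Gamma|g|$) bounding the first summand by $C\|f - f_n\|_{1,M_\Gamma}$, and Stage 1 applied to the second, finishes via a standard $\varepsilon$-argument.

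The main subtlety is to commute the $y$-integral with the $L^q(M_\Gamma^2)$ norm while simultaneously preserving the sharp decay rate and a uniform-in-$y$ majorant; hypothesis \eqref{eq:20} plays a dual role, both powering the $L^\infty$-to-$L^q$ interpolation for $q>1$ and furnishing the dominating majorant. The restriction to compactly supported $f$ at $q = \infty$ reflects the absence of an $L^\infty$ hypercontractivity estimate that could absorb an $L^1(M_\Gamma)$ approximation error.
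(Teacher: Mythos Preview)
Your proof is correct and follows essentially the same approach as the paper: the same cancellation trick with $\rho_t(x,0)$, the same scaling reduction, the same appeal to Corollary~\ref{lem:1} and Theorem~\ref{thm:rho_conv}, and the same compact-support-then-approximate two-stage structure with hypercontractivity absorbing the remainder. The only organizational differences are that the paper handles $q\in(1,\infty)$ by a direct H\"older interpolation between the already-proved $q=1$ and $q=\infty$ cases (rather than your Minkowski-plus-pointwise interpolation), and uses the truncation $f_R=(f-c_R)\indyk_{|x|\le R}$ in place of your $g_n-(\int g_n M_\Gamma)h$; neither difference is substantive.
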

	
	\begin{proof}
		Let $\omega>0$. First, we prove prove \rf{lim_0_cond} for a compactly supported function $f \in L^1(M_\Gamma)$ satisfying 
		\begin{equation}\label{0 con 1}
			\int_{\Gamma} f (x) M_\Gamma (x)  \ud x =0.
		\end{equation}
%
		\noindent
		{\it Step 1. Case $q=\infty $.}\\
		For $t>0$ we let
		\begin{align*}
			I(t)&:= t^{\frac{d+2\beta}{\alpha}}\|  P_t^\Gamma f \|_{\infty ,M_\Gamma }
			=t^{\frac{d+2\beta}{\alpha}}\sup_{x\in \Gamma} \left| \int_{\Gamma} \rho _t(x,y)M_\Gamma (y)f(y) \ud y \right|.
		\end{align*}
		By \rf{0 con 1},
		\begin{align*}
			I(t) = t^{\frac{d+2\beta}{\alpha}}\sup_{x\in \Gamma }\left| \int_{\Gamma} \left( \rho_t(x,y)-\rho_t(x,0)\right)M_\Gamma (y)f (y)  \ud y \right|.
		\end{align*}
		Since $f$ has compact support,  for sufficiently large $t>0$ we have
		\begin{align*}
			I(t) &=t^{\frac{d+2\beta}{\alpha}}\sup_{x\in \Gamma }\left| \int_{|y|\leq t^{\frac{1}{\alpha}}\omega} \left( \rho_t(x,y)-\rho_t(x,0)\right)M_\Gamma (y)f (y)  \ud y \right|\\
			& \le   t^{\frac{d+2\beta}{\alpha}}\sup_{\substack {x\in \Gamma\\ |y|\leq t^{\frac{1}{\alpha}}\omega }}\big|\rho_t(x,y) -\rho_ t(x,0)\big| \int_{|y|\leq t^{\frac{1}{\alpha}}\omega}M_\Gamma (y)|f (y)|  \ud y \\ &=    \sup_{\substack {x\in \Gamma\\ |y|\leq \omega }}\left| \rho_1\left(x,y \right) - \rho_1\left( x, 0\right) \right| \int_{\Gamma}M_\Gamma (y)| f(y)|  \ud y,
		\end{align*}
		where in the last line we used scaling \eqref{eq:scaling1} of $\rho$.
		By Theorem \ref{thm:rho_conv}, we can make it arbitrary small by choosing small $\omega$, and \eqref{lim_0_cond} follows in this case. 
\\		
		{\it Step 2. Case $q=1$. }\\
		For $t>0$ we let
		\begin{equation*}
			J(t):=\| P_t^\Gamma f \|_{1,M_\Gamma }= \int_{\Gamma}\left| \int_{\Gamma} p^\Gamma_t (x,y)M_\Gamma (x)f(y) \ud y \right| \ud x  =\int_{\Gamma }\Big| \int_{\Gamma }\rho_t (x,y)M_\Gamma^2(x)f (y)M_\Gamma (y)  \ud y  \Big|   \ud x .
		\end{equation*}
		Applying \rf{0 con 1}, we get
		\begin{equation*}
			J(t) \le  \int_{\Gamma } \int_{\Gamma } \big| \rho_t(x,y)-\rho_t(x,0)\big|M_\Gamma ^2(x) |f (y)|M_\Gamma (y)  \ud y   \ud x .
		\end{equation*}
		Since $f$ has compact support,
		\begin{align*}
			J(t)& \le  \int_{\Gamma } \int_{|y|\leq t^{\frac{1}{\alpha}}\omega } \big| \rho_t (x,y)-\rho_t(x,0)\big|M_\Gamma ^2(x) |f (y)| M_\Gamma (y)  \ud y   \ud x \\ & \le  \sup_{|y|\leq t^{\frac{1}{\alpha}}\omega}\| \rho_t(\cdot , y)-\rho_t(\cdot ,0)\|_{L^1(M_\Gamma^2)} \int_{\Gamma } M_\Gamma (y)|f (y)|  \ud y,
		\end{align*}
		for sufficiently large $t$. 
In view of \eqref{eq:M_hom} and \eqref{eq:scaling1}, by changing variables $t^{-1/\alpha}x \to x$ and $t^{-1/\alpha}y \to y$ we obtain
		\begin{align*}
			&\sup_{|y|\leq t^{\frac{1}{\alpha}}\omega}\| \rho_t(\cdot , y)-\rho_t(\cdot ,0)\|_{L^1(M_\Gamma^2)} \\
			&= t^{-\frac{d+2\beta}{\alpha}} \sup_{|y|\leq t^{\frac{1}{\alpha}}\omega }\int \left | \rho_1 \left(t^{-{1}/{\alpha}}x,t^{-{1}/{\alpha}}y \right) -\rho_1\left( t^{-{1}/{\alpha}}x,0\right)\right|M_\Gamma ^2(x) \ud x \\
			&=\sup_{|y|\leq \omega} \| \rho_1(\cdot ,y) -\rho_1(\cdot, 0)\|_{L^1(M_\Gamma^2)}.
		\end{align*}
By Corollary~\ref{lem:1}, we can make it arbitrary small by choosing small $\omega$, so \rf{lim_0_cond} is true.\\
{\it Step 3. Case $q\in (1,\infty)$. }\\
		By H\"older inequality we get that, as $t \to \infty$,
		\begin{align*}
			t^{\frac{d+2\beta}{\alpha}\frac{q-1}{q}}\|P_t^\Gamma f \|_{q, M_\Gamma }&=t^{\frac{d+2\beta}{\alpha}\frac{q-1}{q}}\bigg(\int_{\Gamma }\big|  P_t^\Gamma f (x)/M_\Gamma (x)\big|^{q-1}\big| P_t^\Gamma f (x)M_\Gamma (x)\big|  \ud x \bigg)^{\frac{1}{q}} \\
			& \le  \left(t^{\frac{d+2\beta}{\alpha}}\|P_t^\Gamma f \|_{\infty ,M_\Gamma}\right)^{\frac{q-1}{q}} \| P_t^\Gamma f \|_{1, M_\Gamma}^{\frac{1}{q}} \to 0,
		\end{align*}
		since both factors converge to zero as $t\rightarrow \infty $ by {\it Step 1.} and {\it Step 2.} 

Finally, consider arbitrary $f\in L^1(M_\Gamma)$ with $\int_{\Gamma } f(x) M_\Gamma (x)\ud x  =0$.	
Let $R>0$ and $f_R (x)= (f (x)-c_R)\indyk_{|x| \le  R}$, where $c_R=\int_{|x|\le R} f(x)M_\Gamma (x)\ud x/ \int_{|x|\le R} M_\Gamma (x)\ud x$. Of course, 
\begin{equation}\label{e.kpsi}
\int_{\Gamma } M_\Gamma (x) f_R (x)  \ud x  =0,
\end{equation} 
and $f_R$ is compactly supported. Furthermore, due to our assumptions,
		\begin{align*}
			\|f- f_R \|_{L^1(M_\Gamma )}& =   |c_R| \int_{|x| \le  R}M_\Gamma (x)  \ud x+\int_{|x|> R}M_\Gamma (x)|f (x)|  \ud x  \\
			& = \Big| \int_{|x| \le  R} M_\Gamma (x)f (x)  \ud x  \Big| + \int_{|x|> R}M_\Gamma (x)|f (x)|  \ud x \to 0 
		\end{align*}
		as $R\rightarrow \infty$.
Let $\ve >0$ and choose $R>0$ so large that 
		\begin{equation*}
			\| f -f_R \|_{1,M_\Gamma}<\ve.
		\end{equation*}
		
		For $q=1$, by using the triangle inequality and Lemma \ref{lem:contraction}, we get
		\begin{align*}
			\|P_t^\Gamma f\|_{1,M_\Gamma }& \le  \|P_t^\Gamma f_R \|_{1,M_\Gamma  }+ \|P_t^\Gamma(f-f_R) \|_{1,M_\Gamma}\\
			& \le  \|P_t^\Gamma f_R \|_{1,M_\Gamma }+ 
			\|f -f_R \|_{1, M_\Gamma } ,
		\end{align*}
		and {\it Step 2.} yields
		\begin{equation*}
			\limsup_{t\rightarrow \infty } \|P_t^\Gamma f\|_{1,M_\Gamma } \le  \ve,
		\end{equation*}
which proves \rf{lim_0_cond} in this case.
		
		If $1<q<\infty$, then using the triangle inequality and Lemma \ref{lem:hyperc}, we obtain 
		\begin{align*}
			t^{\frac{d+2\beta}{\alpha}\frac{q-1}{q}}\|P_t^\Gamma f\|_{q,M_\Gamma }& \le  t^{\frac{d+2\beta}{\alpha}\frac{q-1}{q}}\|P_t^\Gamma f_R \|_{q,M_\Gamma}+ t^{\frac{d+2\beta}{\alpha}\frac{q-1}{q}}\|P_t^\Gamma(f-f_R) \|_{q,M_\Gamma}\\
			& \le  t^{\frac{d+2\beta}{\alpha}\frac{q-1}{q}}\|P_t^\Gamma f_R \|_{q,M_\Gamma }+ 
			C\|f -f_R \|_{1, M_\Gamma }. 
		\end{align*}
By \eqref{e.kpsi} and {\it Step 3.},
		\begin{equation*}
			\limsup_{t\rightarrow \infty } t^{\frac{d+2\beta}{\alpha}\frac{q-1}{q}}\|P_t^\Gamma f\|_{q,M_\Gamma } \le  
			2C\ve.
		\end{equation*}
This completes the proof of \rf{lim_0_cond} for $q\in (1,\infty)$. 
	\end{proof}

	\begin{theorem}\label{thm_lim_norm}
		Let $q\in[1,\infty)$, assume \eqref{eq:20} and suppose $\Gamma$ is fat. Then for $f\in L^1(M_\Gamma)$ and $A=\int_{\Gamma}f (x)  M_\Gamma(x) \ud x$, 
		\begin{equation*}
			\lim_{t\rightarrow \infty }t^{\frac{d+2\beta}{\alpha}\frac{q-1}{q}}\| P_t^\Gamma f-A\Psi_t\|_{q, M_\Gamma}=0.
		\end{equation*}
	\end{theorem}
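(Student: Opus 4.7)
The plan is to reduce the statement directly to Lemma \ref{lem_lim_0} by writing $P_t^\Gamma f - A\Psi_t$ as the killed semigroup applied to a mean-zero $L^1(M_\Gamma)$ function. The key observation is that the entrance law \eqref{eq:psi_evol}, read as $P_u^\Gamma \Psi_s = \Psi_{s+u}$, makes $\Psi_t$ evolve under $P^\Gamma$ in the same way that $P_t^\Gamma f$ does, so a one-step splitting of the time variable turns the difference into something to which Lemma \ref{lem_lim_0} directly applies.

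Concretely, I would fix any $s > 0$ (say $s = 1$) and set $g_s := P_s^\Gamma f - A\,\Psi_s$. By Lemma \ref{lem:contraction}, $P_s^\Gamma f \in L^1(M_\Gamma)$, and by \eqref{e.nM} also $\Psi_s \in L^1(M_\Gamma)$, hence $g_s \in L^1(M_\Gamma)$. Moreover, combining \eqref{e.cl} with \eqref{e.nM} gives
\begin{equation*}
\int_\Gamma g_s(x) M_\Gamma(x) \ud x = \int_\Gamma f(x) M_\Gamma(x) \ud x - A \int_\Gamma \Psi_s(x) M_\Gamma(x) \ud x = A - A = 0.
\end{equation*}
Using the semigroup property of $P^\Gamma$ and the entrance law once more, for every $t > s$,
\begin{equation*}
P_{t-s}^\Gamma g_s = P_{t-s}^\Gamma P_s^\Gamma f - A\,P_{t-s}^\Gamma \Psi_s = P_t^\Gamma f - A\,\Psi_t.
\end{equation*}

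Finally, I would apply Lemma \ref{lem_lim_0} to $g_s$; its hypotheses (fat $\Gamma$, condition \eqref{eq:20}, $g_s \in L^1(M_\Gamma)$ with zero $M_\Gamma$-mean) are precisely those carried over from the theorem, and for $q \in [1,\infty)$ no compact support is required. This produces
\begin{equation*}
\lim_{u \to \infty} u^{\frac{d+2\beta}{\alpha}\frac{q-1}{q}} \|P_u^\Gamma g_s\|_{q, M_\Gamma} = 0.
\end{equation*}
Substituting $u = t - s$ and using that $(t-s)^{\frac{d+2\beta}{\alpha}\frac{q-1}{q}} / t^{\frac{d+2\beta}{\alpha}\frac{q-1}{q}} \to 1$ as $t \to \infty$ with $s$ fixed, I obtain the claim. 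I do not foresee a serious obstacle: all analytic content is packed into Lemma \ref{lem_lim_0}, the mass conservation in Lemma \ref{lem:contraction}, and the entrance-law identity \eqref{eq:psi_evol}; the only thing to check is the harmless replacement of $t$ by $t-s$ in the polynomial factor.
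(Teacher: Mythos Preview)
Your proposal is correct and follows essentially the same approach as the paper: both write $P_t^\Gamma f - A\Psi_t = P_{t-s}^\Gamma(P_s^\Gamma f - A\Psi_s)$ (the paper takes $s=1$) and apply Lemma~\ref{lem_lim_0} to the mean-zero function $P_s^\Gamma f - A\Psi_s$. The only cosmetic difference is that the paper absorbs the time shift by replacing $t$ with $t+1$ at the outset, whereas you handle it at the end via $(t-s)^{\ldots}/t^{\ldots}\to 1$; both are equivalent.
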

	\begin{remark}\label{rem:A_constant}
		In view of \eqref{e.nM} and Lemma \ref{lem:contraction}, the constant $A$ in Theorem \ref{thm_lim_norm} satisfies
		\[
		\int_{\Gamma} \big(P_t^{\Gamma} f(x) - A \Psi_s(x)\big) M_{\Gamma}(x)\ud x = 0, \quad s, t >0.
		\]
	\end{remark}
	\begin{proof}[Proof of Theorem \ref{thm_lim_norm}]
		By \eqref{eq:9}, \eqref{eq:psi_evol}, Remark \ref{rem:A_constant} and Lemma \ref{lem_lim_0},
		\begin{align*}
			\lim_{t\rightarrow \infty }t^{\frac{d+2\beta}{\alpha}\frac{q-1}{q}}\| P_t^\Gamma f-A\Psi_t\|_{q, M_\Gamma} &= \lim_{t\rightarrow \infty }t^{\frac{d+2\beta}{\alpha}\frac{q-1}{q}}\| P_{t+1}^\Gamma f-A\Psi_{t+1}\|_{q, M_\Gamma} \\ &= \lim_{t\rightarrow \infty }t^{\frac{d+2\beta}{\alpha}\frac{q-1}{q}}\| P_t^{\Gamma} \big( P_{1}^\Gamma f-A\Psi_{1} \big)\|_{q, M_\Gamma} =0.
		\end{align*}
	\end{proof}

\subsection{Applications}
We conclude the article by providing several applications and examples which apply to our results. In particular, we draw the reader's attention to Lemma \ref{lem:rc_cones}, which provides sharp distinction between cones bigger and smaller than the half-space $\mathbb{R}_+^d:= \{x=(x_1,\ldots,x_d) \in \Rd \colon x_d > 0\}$. The same behavior is displayed by the bigger class of smooth cones, as we assert in Corollary \ref{cor:s_cones}. First, we note a simple observation.
	\begin{example}
		Let $q=1$. By \eqref{eq:rho_density}, the condition \eqref{eq:20} holds for every fat cone $\Gamma$.
	\end{example}
	\begin{lemma}\label{lem:rc_cones}
		Let $q \in (1,\infty)$ and suppose $\Gamma$ is a right-circular cone. Then \eqref{eq:20} holds if $\beta \geq \alpha/2$. Conversely, if $d \geq 2$ and $\beta<\alpha/2$, then \eqref{eq:20} does not hold.
	\end{lemma}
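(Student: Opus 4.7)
The plan is to leverage the factorization \eqref{eq:rho_factor} together with the sharp estimate from part (3) of the remark following Lemma \ref{lem:sur_M_comp},
\[
g(y) := \frac{\P_y(\tau_{\Gamma}>1)}{M_{\Gamma}(y)} \approx \frac{(1+\delta_{\Gamma}(y))^{-\alpha/2}}{(1+|y|)^{\beta-\alpha/2}}, \quad y \in \Gamma,
\]
valid for right-circular cones. The dichotomy turns on the observation that $g$ is bounded on $\Gamma$ precisely when $\beta \geq \alpha/2$; otherwise it blows up along rays near the lateral boundary of $\Gamma$.

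For sufficiency, assuming $\beta \geq \alpha/2$, I would note that $g$ is uniformly bounded on $\Gamma$ and that $p_1 \leq p_1(0) < \infty$, so the factorization \eqref{eq:rho_factor} yields a constant $C$ with $\rho_1(x,y) \leq C$ on $\Gamma \times \Gamma$. Combined with \eqref{eq:rho_density}, which gives $\int_{\Gamma} \rho_1(x,y) M_{\Gamma}^2(x)\ud x = 1$, this immediately implies
\[
\int_{\Gamma} \rho_1(x,y)^q M_{\Gamma}^2(x)\ud x \leq C^{q-1},
\]
uniformly in $y \in \Gamma$, which is \eqref{eq:20}.

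For necessity, assuming $d \geq 2$ and $\beta < \alpha/2$, the plan is to exhibit a sequence $(y_n) \subset \Gamma$ along which the integral diverges. Since $d \geq 2$, the cone has a nontrivial lateral boundary, and we can choose $y_n \in \Gamma$ with $|y_n| \to \infty$ but $\delta_\Gamma(y_n) = 1$ (for instance, points at angle $\eta - \arcsin(1/|y_n|)$ from the axis). Then $g(y_n) \approx |y_n|^{\alpha/2-\beta} \to \infty$. Restricting the integral to $B(y_n, 1/2) \subset \Gamma$, on which $\delta_{\Gamma}(x) \in [1/2,3/2]$, $|x| \approx |y_n|$, and $p_1(x,y_n) \gtrsim 1$, the same displayed formula together with $M_{\Gamma}(x) \approx \delta_\Gamma(x)^{\alpha/2}|x|^{\beta-\alpha/2}$ give $g(x) \approx |y_n|^{\alpha/2-\beta}$ and $M_{\Gamma}^2(x) \approx |y_n|^{2\beta-\alpha}$. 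The lower bound in \eqref{eq:rho_factor} then yields $\rho_1(x,y_n) \gtrsim |y_n|^{\alpha-2\beta}$ throughout this ball, hence
\[
\int_{\Gamma} \rho_1(x,y_n)^q M_{\Gamma}^2(x)\ud x \gtrsim |y_n|^{q(\alpha-2\beta)+2\beta-\alpha} = |y_n|^{(q-1)(\alpha-2\beta)} \to \infty,
\]
since $q > 1$ and $\alpha - 2\beta > 0$. The main obstacle is simply identifying this divergent direction; once identified, the pointwise bounds are routine from the right-circular cone estimates already recorded in the paper.
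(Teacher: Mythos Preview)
Your proof is correct and follows essentially the same route as the paper's own argument: both establish the two-sided estimate $\rho_1(x,y)\approx (1+|x-y|)^{-d-\alpha}g(x)g(y)$ (you via \eqref{eq:rho_factor} and the remark after Lemma~\ref{lem:sur_M_comp}, the paper via the explicit formula for $p_1^\Gamma$ and $M_\Gamma$), deduce boundedness of $\rho_1$ when $\beta\ge\alpha/2$ and combine with \eqref{eq:rho_density} to get \eqref{eq:20}, and for $\beta<\alpha/2$ integrate over a unit-scale ball near the lateral boundary at large radius to obtain the lower bound $|y|^{(q-1)(\alpha-2\beta)}$. The only cosmetic differences are your choice $\delta_\Gamma(y_n)=1$ versus the paper's $\delta_\Gamma(y)=2$, and the radius of the ball.
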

	\begin{proof}
		Recall that by \cite[Theorem 2 and Eq. (3)]{KBTGMR10},
		\begin{equation}\label{eq:21}
			p_1^{\Gamma}(x,y) \approx p_1(x,y) \frac{\big(1 \wedge \delta_{\Gamma}(x)\big)^{\alpha/2} \big(1 \wedge \delta_{\Gamma}(y)\big)^{\alpha/2}}{\big(1 \wedge |x|)^{\alpha/2-\beta} \big(1 \wedge |y|)^{\alpha/2-\beta}}, \quad x,y \in \Gamma.
		\end{equation}
		Moreover, \cite[Lemma 3.3]{Michalik06} entails that
		\[
		M_{\Gamma}(x) \approx \delta_{\Gamma}(x)^{\alpha/2}|x|^{\beta-\alpha/2}, \quad x \in \Rd.
		\]
		Using this together with \eqref{eq:21} and \eqref{eq:hk}, we infer that, for $x,y \in \Gamma$,
		\begin{align}\label{eq:26}\begin{aligned}
				\rho_1(x,y) &\approx p_1(x,y) \frac{\big(1 \wedge \delta_{\Gamma}(x)\big)^{\alpha/2} \big(1 \wedge \delta_{\Gamma}(y)\big)^{\alpha/2}}{\big(1 \wedge |x|)^{\alpha/2-\beta} \big(1 \wedge |y|)^{\alpha/2-\beta} \delta_{\Gamma}(x)^{\alpha/2}|x|^{\beta-\alpha/2}\delta_{\Gamma}(y)^{\alpha/2}|y|^{\beta-\alpha/2}} \\ &\approx \big(1+|x-y|\big)^{-d-\alpha} \frac{\big(1+\delta_{\Gamma}(x))^{-\alpha/2} \big(1+\delta_{\Gamma}(y))^{-\alpha/2}}{(1+|x|)^{\beta-\alpha/2} (1+|y|)^{\beta-\alpha/2}}.\end{aligned}
		\end{align}
	Let $q \in (1,\infty)$ and assume $\beta \geq \alpha/2$. 
	Then it follows from \eqref{eq:26} that $\rho_1(x,y) \lesssim 1$  for $x,y \in \Gamma$, and \eqref{eq:rho_density} entails that
	\begin{align}\label{eq:31}\begin{aligned}
		\int_{\Gamma} \rho_1(x,y)^q M_{\Gamma}^2(x)\ud x &\leq \| \rho_1(x,\,\cdot\,) \|_{\infty}^{q-1}\int_{\Gamma}\rho_1(x,y)M_{\Gamma}^2(y)\ud y \\ &= \| \rho_1(x,\,\cdot\,) \|_{\infty}^{q-1}\lesssim 1.\end{aligned}
	\end{align}
	Thus, we get \eqref{eq:20} as claimed.
	
	Now assume that $d \geq 2$ and $\beta <\alpha/2$. Let $y \in \Gamma$ be such that $\delta_{\Gamma}(y)=2$, so that $A:=B(y,1) \subseteq \Gamma$. Then for $x \in A$ one clearly has that $1+|x-y| \approx 1$ and $\delta_{\Gamma}(x) \approx 1$. Then it follows from \eqref{eq:26} that
	\begin{align*}
		\int_{\Gamma} \rho_1(x,y)^qM_{\Gamma}^2(x)\ud x &\geq \int_A \rho_1(x,y)^qM_{\Gamma}^2(x)\ud x \\ &\approx \int_A (1+|x|)^{q(\alpha/2-\beta)}(1+|y|)^{q(\alpha/2-\beta)} \delta_{\Gamma}(x)^{\alpha}|x|^{2\beta-\alpha}\ud x \\ &\approx |y|^{(q-1)(\alpha-2\beta)}.
	\end{align*}
	Since $\alpha-2\beta>0$ and $q>1$, by taking $|y| \to \infty$ we see that \eqref{eq:20} cannot hold in this case.
\end{proof}
\begin{corollary}\label{cor:s_cones}
	For $d \geq 2$ and smooth cone $\Gamma$, \eqref{eq:20} holds if and only if $\beta \geq \alpha/2$. 
\end{corollary}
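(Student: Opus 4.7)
The plan is to inherit the proof of Lemma \ref{lem:rc_cones} verbatim, after observing that its two main inputs — the two-sided heat kernel estimate \eqref{eq:21} and the Martin-kernel comparison $M_{\Gamma}(x) \approx \delta_{\Gamma}(x)^{\alpha/2}|x|^{\beta-\alpha/2}$ — remain valid for every smooth cone. Both estimates ultimately rest on the $C^{1,1}$ regularity of the cap $\Gamma\cap\Sd$ (via the uniform interior and exterior ball condition on $\partial\Gamma\setminus\{0\}$, and the boundary Harnack principle), combined with the conical homogeneity; this is exactly the setting of \cite{KBTGMR10} and of the Martin kernel analysis used by Michalik for the right-circular case. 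Granting these, the pointwise estimate \eqref{eq:26} for $\rho_1$ transfers word-for-word from right-circular to general smooth cones.

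The sufficiency $\beta\geq\alpha/2\Rightarrow$\eqref{eq:20} is then immediate from the extended \eqref{eq:26}: the factor $(1+|x|)^{\beta-\alpha/2}(1+|y|)^{\beta-\alpha/2}$ becomes bounded below, so $\rho_1(x,y)\lesssim 1$ uniformly on $\Gamma\times\Gamma$. The calculation \eqref{eq:31} — bounding the $L^q(M_{\Gamma}^2)$ norm by the $L^{\infty}$ norm raised to $q-1$ times the $L^1(M_{\Gamma}^2)$ norm, which equals $1$ by \eqref{eq:rho_density} — then gives \eqref{eq:20}.

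For the necessity, I would construct a sequence $y_n\in\Gamma$ with $|y_n|\to\infty$ and $\delta_{\Gamma}(y_n)=2$. Since $d\geq 2$ and the cap has nonempty $C^{1,1}$ boundary, pick $p_n\in\Gamma\cap\Sd$ with $\mathrm{dist}_{\Sd}(p_n,\partial(\Gamma\cap\Sd))\to 0$ and set $y_n:=(2/\delta_{\Gamma}(p_n))\,p_n$; homogeneity of the cone gives $\delta_{\Gamma}(y_n)=2$ and $|y_n|\to\infty$. Then on $A:=B(y_n,1)\subset\Gamma$ we have $|x|\approx |y_n|$, $\delta_{\Gamma}(x)\approx 1$, $|x-y_n|\leq 1$, so \eqref{eq:26} gives
\[
\rho_1(x,y_n)\approx |y_n|^{\alpha-2\beta},\qquad M_{\Gamma}^2(x)\approx |y_n|^{2\beta-\alpha},
\]
whence
\[
\int_{\Gamma}\rho_1(x,y_n)^q M_{\Gamma}^2(x)\ud x\geq \int_A \rho_1(x,y_n)^q M_{\Gamma}^2(x)\ud x\gtrsim |y_n|^{(q-1)(\alpha-2\beta)}\to\infty,
\]
contradicting \eqref{eq:20}.

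The main obstacle is the first point — justifying the transfer of \eqref{eq:21} and of the Martin kernel estimate from right-circular to general smooth cones. Everything else is a routine adaptation of Lemma \ref{lem:rc_cones}, but the sharp factorization \eqref{eq:26} is exactly what encodes the interplay between distance to the boundary and distance to the vertex, and without it the critical threshold $\beta=\alpha/2$ cannot be detected. The construction of $y_n$ is a minor secondary issue: it is precisely here that we use $d\geq 2$ together with the existence of a genuine $C^{1,1}$ cap boundary, which allows us to stay at bounded distance from $\partial\Gamma$ while escaping to infinity.
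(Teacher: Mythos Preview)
Your proposal is correct and follows essentially the same route as the paper: the paper's proof also boils down to observing that the Martin-kernel comparison $M_{\Gamma}(x)\approx\delta_{\Gamma}(x)^{\alpha/2}|x|^{\beta-\alpha/2}$ (via BHP and homogeneity, as in \cite[Lemma~3.3]{Michalik06}) and the heat-kernel estimate \eqref{eq:21} carry over to smooth cones, after which the proof of Lemma~\ref{lem:rc_cones} is repeated verbatim. Your explicit construction of the sequence $y_n$ with $\delta_{\Gamma}(y_n)=2$ and $|y_n|\to\infty$ is a useful detail the paper leaves implicit.
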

\begin{proof}
	Recall that $\Gamma$ is open and $C^{1,1}$ outside of the origin.
	From the harmonicity and homogeneity of $M_{\Gamma}$, by the boundary Harnack principle we get, as in \cite[Lemma 3.3]{Michalik06}, that 
	\[
	M_{\Gamma}(x) \approx \delta_{\Gamma}(x)^{\alpha/2}|x|^{\beta-\alpha/2}, \quad x \in \Gamma.
	\]
	Moreover, since a smooth cone is fat, its Dirichlet heat kernel satisfies \eqref{eq:21}. Thus, one can directly repeat the proof of Lemma \ref{lem:rc_cones} to conclude the claim.
\end{proof}

\begin{example}\label{ex:1}
For $d=1$, either $\Gamma=(0,\infty)$ or $\Gamma = \R \setminus \{0\}$ and both cases are (trivially) smooth cones, with $\delta_{\Gamma}(y)=|y|$ for $y \in \Gamma$. 
	
	When $\Gamma = (0,\infty)$, then \eqref{eq:26} yields the boundedness of $\rho_1$ and \eqref{eq:20} holds through \eqref{eq:31}, since this $\Gamma$ is right-circular. Recall that here one has $\beta = \alpha/2$ and $M_\Gamma(x)=(0\vee x)^{\alpha/2}$ by \cite[Example 3.2]{BB04}, and we refer to Example~\ref{ex.pp} for the rest of the summary of this case.  
	
	If $\alpha \in (1,2)$ and $\Gamma=\R \setminus \{0\}$, then $\Gamma^c=\{0\}$ is a non-polar set, $\beta = \alpha-1$, and $M_\Gamma(x)=|x|^{\alpha-1}$; see \cite[Example 3.3]{BB04}. Note that in this example, $\Gamma$ is not right-circular anymore. Nevertheless, by \cite[Example 8]{KBTGMR10}, the survival probability is $\P_x(\tau_{\Gamma}>t) \approx (1 \wedge t^{-1/\alpha}|x|)^{\alpha-1}$ and
		\[
	p_1^{\Gamma}(x,y) \approx (1+|x-y|)^{-1-\alpha} (1 \wedge |x|)^{\alpha-1} (1 \wedge |y|)^{\alpha-1}, \quad x,y \in \Gamma.
	\]  
	Thus,
	\[
	\rho_1(x,y) \approx (1+|x-y|)^{-1-\alpha} (1+|x|)^{1-\alpha}(1+|y|)^{1-\alpha} \lesssim 1, \quad x,y \in \Gamma,
	\]
	and one may apply \eqref{eq:31} to get \eqref{eq:20}, too. It then follows from Theorem~\ref{thm_lim_norm}  with $q=2$ that $\lim_{t \to \infty} t^{(2\alpha-1)/(2\alpha)}\|P_t^{\Gamma}f\|_{2}=0$ if $\int_\R f(x)|x|^{\alpha-1}\ud x=0$. Accordingly, by Lemma~\ref{lem:vphi_redef}, the stationary density $\vphi$ of Theorem \ref{thm:stat_density} satisfies $\vphi(x) \approx (1+|x|)^{-2\alpha}$. Here and below, $x \in \Gamma$ and $t>0$. Then, $\Psi_t(x)\approx t^{-1} (1+t^{-1/\alpha}|x|)^{-1-\alpha}(1\wedge t^{-1/\alpha}|x|)^{\alpha-1}$ and the density of the Yaglom limit is comparable with $\vphi(x)|x|^{\alpha-1} \approx  (1+|x|)^{-2\alpha} |x|^{\alpha-1} \approx \Psi_1(x)$.

%
%

\end{example}
%
%

\begin{example}\label{ex:rc_cones}
	Let $d \geq 2$ and $\Gamma$ be a right-circular cone which is a subset of the half-space $\mathbb{R}_+^d$. Then by \cite[Example 3.2 and Lemma 3.3]{BB04}, we have $\beta \geq \alpha/2$ and Lemma \ref{lem:rc_cones} gives \eqref{eq:20}. On the other hand, if $\Gamma$ is such that $\mathbb{R}_+^d \subsetneq \Gamma$, then $\beta < \alpha/2$ by \cite[Lemma 3.3]{BB04}, and Lemma \ref{lem:rc_cones} asserts that \eqref{eq:20} does not hold.
	
The following extends Example~\ref{ex.pp}.	Let $\Gamma=\mathbb{R}_+^d$, so that $M_{\Gamma}(x) = (0\vee x_d)^{\alpha/2}$. By Theorem \ref{thm_lim_norm}, $\lim_{t \to \infty} t^{(d+\alpha)/(2\alpha)} \|P_t^{\Gamma}f\|_2=0$ if $\int_{\R^d_+} f(x)x_d^{\alpha/2}\ud x=0$. Furthermore, the survival probability is $\P_x(\tau_{\Gamma}>t) \approx 1 \wedge t^{-1/2} (0\vee x_d)^{\alpha/2}$ and 
	\[
	p_1^{\Gamma}(x,y) \approx (1 + |x-y|)^{-d-\alpha} (1 \wedge x_d)^{\alpha/2}(1 \wedge y_d)^{\alpha/2}, 
	\]
	see \cite[Example 2]{KBTGMR10}. Here and below, $x,y \in \Gamma$. So, $\Psi_t(x) \approx (t^{1/\alpha} \vee |x|)^{-d-\alpha}(t^{1/\alpha} \wedge x_d)^{\alpha/2}$, $t>0$. The stationary density $\vphi$ is  comparable to $(1+|x|)^{-d-\alpha} (1+x_d)^{-\alpha/2}$ and the density of the Yaglom distribution is $\vphi(x)x_d^{\alpha/2}/\int_{\R^d_+} \vphi(y)y_d^{\alpha/2}\ud y \approx (1+|x|)^{-d-\alpha} (1\wedge x_d)^{\alpha/2} \approx \Psi_1(x)$. 
\end{example}

	\bibliographystyle{abbrv}
	\bibliography{bib-file}
	
\end{document}